\documentclass{amsart}

\usepackage{hyperref}
\hypersetup{colorlinks,linkcolor=blue,citecolor=blue}
\usepackage{amssymb}
\usepackage{cite}
\usepackage{amsmath}
\usepackage{comment}
\usepackage{bookmark}
\usepackage{enumerate}
\usepackage{tikz}
\usepackage{a4wide}
  \usepackage{bm}
\newtheorem{thm}{Theorem}[section]
\newtheorem{theorem}[thm]{Theorem}
\newtheorem{lem}[thm]{Lemma}
\newtheorem{definition}[thm]{Definition}

\newtheorem{defi}[thm]{Definition}

\newtheorem{quest}[thm]{Question}

\newtheorem{prop}[thm]{Proposition}
\newtheorem{rem}[thm]{Remark}

\newtheorem{fact}[thm]{Fact} 

\newcommand{\norm}[1]{\left\lVert#1\right\rVert}
\newcommand{\cC}{{\mathcal C}}
\newcommand{\cI}{{\mathcal I}}
\newcommand{\cJ}{{\mathcal J}}

\numberwithin{equation}{section}

\begin{document}

\title[Isomorphic embedding from Lorentz function spaces into Lorentz operator ideals]{
Lack of isomorphic embedding from Lorentz function spaces into Lorentz operator ideals}

\author[J. Huang]{Jinghao Huang}
\address{Institute for  Advanced Study in  Mathematics of HIT, Harbin Institute of Technology, Harbin, 150001, China}
\email{jinghao.huang@hit.edu.cn}
 
\author[F. Sukochev]{Fedor Sukochev}
\address{School of Mathematics and Statistics, University of NSW, Sydney,  2052, Australia}
\email{f.sukochev@unsw.edu.au}
 
\author[D. Zanin]{Dmitriy Zanin}
\address{School of Mathematics and Statistics, Central South University, Changsha, 410083, China}
\email{d.zanin@csu.edu.cn}

\thanks{J. Huang was supported the NNSF of China (No. 12301160,  12471134  and 12671159). F. Sukochev and D. Zanin were supported by the ARC}
\keywords{isomorphic embedding; 
Lorentz ideal  $C_{p,q}$; $L_{p,q}$ function space.}

\subjclass[2010]{46E30, 47B10, 46L52, 46B03.  }

\begin{abstract} 
In the present paper, we study the existence of  isomorphic embeddings from Lorentz function spaces $L_{p_1,q_1}$ into Lorentz ideals $\mathcal{C}_{p_2,q_2}$ in $B(H).$  In particular, we show that, for $p,q\in (0,\infty),$ the quasi-Banach function space $L_{p,q}(0,1)$  isomorphically embeds into the ideal $\mathcal{C}_{p,q}$ if and only if $(p,q)=(2,2).$ This extends several existing  results in the literature and answers a question due to Astashkin et. al. \cite{AHS}. 
\end{abstract}

\maketitle

\section{Introduction}
\subsection{Main result}
We say that a (quasi-)Banach space $X$ isomorphically embeds into  a (quasi-)Banach space $Y$ (we write $X\hookrightarrow Y$) if there exists a bicontinuous linear embedding
$T:X\rightarrow Y.$ In other words, $T$ is a continuous linear injection and $T^{-1}:T(X)\rightarrow X$ is also continuous. We write $X\not\hookrightarrow Y$ if $X$ does not isomorphically embed  into $Y.$ 
If $X,Y$ are isomorphic Banach spaces, then we write  $X\approx Y$.  We use \cite{LT1,LT2,AK} as general references to Banach space theory.
 
 %$$d(X,Y)=\inf\{\|T\|\|T^{-1}\|,\  T \ \text{is an isomorphism from} \ X  \ \text{onto}  \ Y\}$$ where the infimum is taken over all possible isomorphisms $T$. 
% If Banach spaces $X$ and $Y$ are not isomorphic, then we write $d(X,Y)=\infty$.

In  \cite{AHS}, the authors study the non-embeddings of symmetric function spaces into operator ideals.
In particular, it was shown that 
$$L_{p,q}(0,1)\not\hookrightarrow \mathcal{C}_{p,q}$$
when $p>q>2$ or $1< p<2$ or $p=2, q\ne 2$. 
In the special setting when $p=q$, this recovers 
Arazy and Lindenstrauss' result in \cite{AL}, where   they proved $L_p(0,1)$ embeds into the Schatten class $C_p$, $1\le p<\infty$, if and only if $p=2$. 
However, the case when $p>2$ and $q\in [1,2]\cup (p,\infty )$ was left as an open question\cite[Question 6.3]{AHS} (see also \cite[Section 5]{HSSZ}):
\begin{quest}\label{quest}
Does $L_{p,q}(0,1) $ isomorphically embed into 
$C_{p,q}$
when $2<p<\infty $ and $q\in [1,2]\cup (p,\infty )$?
\end{quest}
 The main goal of the present paper is to answer  the above question in a   more general setting of quasi-Banach ideals.
\begin{thm}\label{main cor} 
Let $p,q\in (0,\infty).$ We have $L_{p,q}(0,1)\hookrightarrow \mathcal{C}_{p,q}$ if and only if $p=q=2.$
\end{thm}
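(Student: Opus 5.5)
The "if" direction is immediate: $L_{2,2}(0,1)=L_2(0,1)$ is a separable infinite-dimensional Hilbert space, and $\mathcal{C}_{2,2}=\mathcal{C}_2$ is the Hilbert--Schmidt class, also a separable Hilbert space. So they are isometric and $L_2\hookrightarrow\mathcal{C}_2$. The whole work is the "only if" direction, where I would use local invariants that distinguish $L_{p,q}(0,1)$ from $\mathcal{C}_{p,q}$.

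The main tool would be the classical strategy of Arazy--Lindenstrauss. Since $L_{p,q}(0,1)$ contains disjointly supported copies of itself on every subinterval, a suitably normalized embedding $T$ can be perturbed (gliding hump, plus a Kadec--Pełczyński-type dichotomy for $\mathcal{C}_{p,q}$) so that the images of independent or disjoint systems become almost block diagonal. Block-diagonal sequences of operators in $\mathcal{C}_{p,q}$ span, up to isomorphism, an $\ell_{p,q}$-type direct sum of the ideals on the blocks. The first step is therefore to establish which spaces embed in $\mathcal{C}_{p,q}$: every subspace either contains $\ell_{p,q}$ or embeds into $\ell_2$-sums of finite-dimensional pieces (a "row/column" structure coming from Kwapień-type decompositions of matrix ideals). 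On the function side I would record the sequences that $L_{p,q}(0,1)$ always contains: independent symmetric variables give $\ell_2$ (Rademachers, via Khintchine in $L_{p,q}$); disjointly supported functions give $\ell_{p,q}$ and, by dilation, also the spaces $\ell_{r,s}$ for $r\ne p$ near $p$, or Orlicz/Lorentz sequence spaces of the type $\ell_{p}\oplus\ell_{q}$-mixtures.

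Next I would split into cases. The region $p>q>2$, $1<p<2$ and $p=2\neq q$ is covered by \cite{AHS}. The quasi-Banach range $p\le1$ or $q\le1$ needs a separate argument. For $p<1$, say, $L_{p,q}(0,1)$ has trivial dual (or at least fails to have a separating dual in the relevant regime), while $\mathcal{C}_{p,q}$ has a separating dual. For $q<1\le p$ one compares the types: the Rademacher type of $\mathcal{C}_{p,q}$ is equal to that of $\ell_{p,q}$, and I would compare it with the type of $L_{p,q}(0,1)$, which contains disjoint $\ell_{p,q}$ together with Rademacher $\ell_2$ in a mixed way. For the remaining open case $p>2$, $q\in[1,2]\cup(p,\infty)$, I would find a finite-dimensional configuration in $L_{p,q}$ that has no uniform copy in $\mathcal{C}_{p,q}$. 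The natural candidate is the sequence $f_{n}=\sum_{k} r_k\otimes\chi_{A_k}$: independent copies of $\ell_2^n$ placed on disjoint sets of geometrically varying measure. In $L_{p,q}$ this yields $\ell_{p,q}(\ell_2)$-type structures that are uniformly complemented and "unconditionally mixed". In $\mathcal{C}_{p,q}$, a result of Kwapień--Pełczyński-type (the main triangle projection and the noncommutative Khintchine inequality in $\mathcal{C}_{p,q}$, $p>2$) forces such a copy to be $\ell_2$-row plus column, which contradicts the Lorentz $q$-behaviour when $q\ne p$ or $q\le 2$.

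The main obstacle is this last step: making the block-diagonalisation rigorous in $\mathcal{C}_{p,q}$ for $q\neq p$, where the ideal is not a Schatten class and the interpolation used by Arazy--Lindenstrauss is unavailable. I would first try to prove an upper $p$-estimate/lower $q$-estimate dichotomy for disjointly supported matrices in $\mathcal{C}_{p,q}$, combined with the noncommutative Khintchine inequality for $\mathcal{C}_{p,q}$ with $p>2$. That combination should yield an $\ell_2$-sum upper bound incompatible with the $\ell_{p,q}(\ell_2)$ lower bound in $L_{p,q}$.
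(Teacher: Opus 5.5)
Your ``if'' direction is fine. So is the trivial-dual argument for $p<1$, which is a genuine shortcut compared with the paper's $r$-stable argument: $\mathcal{C}_{p,q}\subset\mathcal{K}(H)$ has a separating dual while $L_{p,q}(0,1)$ has none, so there is not even a continuous injection. The decisive cases, however, rest on a strategy that cannot succeed. For $p>2$ you look for a finite-dimensional configuration in $L_{p,q}(0,1)$ with no uniform copy in $\mathcal{C}_{p,q}$, and for $q<1$ you compare types. But $L_{p,q}(0,1)$ is finitely representable in $\mathcal{C}_{p,q}$. Approximate a finite-dimensional subspace by simple functions on the dyadic intervals $I_k$ of length $2^{-n}$. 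Since $\mu(t;\sum_k a_k\chi_{I_k})=\mu(2^nt;a)$, the span of the $\chi_{I_k}$ is, after scaling by $2^{n/p}$, isometric to $\ell_{p,q}^{2^n}$, and $\ell_{p,q}$ sits isometrically in $\mathcal{C}_{p,q}$ as diagonal operators. Hence every finite-dimensional subspace of $L_{p,q}(0,1)$ embeds into $\mathcal{C}_{p,q}$ with a constant depending only on $p,q$. This includes your $\sum_k r_k\otimes\chi_{A_k}$ configurations, which land as commuting diagonal matrices, not ``row plus column'' pieces. Likewise, any type or cotype that $\mathcal{C}_{p,q}$ has is automatically inherited by $L_{p,q}(0,1)$, so no type comparison can rule out an embedding. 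This is also why the case $p=q>2$, missing from your case list, needs the non-local theorem of Arazy--Lindenstrauss \cite{AL}.

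Several of your inputs are also false. A normalised disjoint sequence in $L_{p,q}(0,1)$ tends to $0$ in measure, so it has a subsequence equivalent to the $\ell_q$-basis \cite{CD88}; indeed $\ell_{p,q}\not\hookrightarrow L_{p,q}(0,1)$ for $p\neq q$ \cite{KurS,SS}. In $\mathcal{C}_{p,q}$, uniformly null sequences yield $\ell_q$ (Lemma~\ref{CD lemma}), not $\ell_{p,q}$. The dichotomy available for subspaces of $\mathcal{K}(H)$ is Holub's (Lemma~\ref{holub lemma}), not yours. For instance, that copy of $\ell_q$ with $1<q\neq 2$, $q\neq p$ neither contains $\ell_{p,q}$ nor embeds into an $\ell_2$-sum of finite-dimensional spaces.

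What is missing is a genuinely infinite-dimensional mechanism, and the step you flag as the ``main obstacle'' is where the paper's whole proof lives.
\begin{itemize}
\item By the Kruglov property and Theorem~\ref{AS thm}, $Z^2_{L_{p,q}}(0,\infty)=(L_{p,q}\cap L_2)(0,\infty)$ (for $p>2$) embeds into $L_{p,q}(0,1)$, hence into $\mathcal{C}_{p,q}$.
\item The images $x^{(M)}_j=T(f\otimes\chi_{(j,j+1)}\otimes\chi_{(M,M+\frac1M)})$ are weak-operator null in $j$. Theorem~\ref{fedor decomposition theorem} splits a subsequence into block-diagonal parts, one-sided parts $b_m,c_m$ with $|b_m|\to y^{1/2}$ and $|c_m^*|\to z^{1/2}$, and a null remainder.
\item The $|A|^{1/2}$ upper bound coming from the $L_2$-part of the norm gives, via Theorem~\ref{limit point theorem}, $\|y^{1/2}\|,\|z^{1/2}\|\lesssim M^{-1/2}\|f\|_2$.
\item The diagonal sums $A_M$ are therefore uniformly small in operator norm, with quasi-norm comparable to $\|f\|_{p,q}$ up to $O(\|f\|_2)$. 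By Lemma~\ref{CD lemma} a subsequence spans $\ell_q$, whereas the corresponding domain sums grow like $\max\{N^{1/p}\|f\|_{p,q},N^{1/2}\|f\|_2\}$.
\item For $p\neq q$ this forces $\|f\|_{p,q}\lesssim\|f\|_2$ on $L_\infty(0,1)$, which is absurd for $p>2$.
\end{itemize}
The case $p<2$ is handled by $r$-stable variables together with Lemma~\ref{first easy lemma}, and $p=2$ by a separate argument built on the same decomposition. The only remaining case, $L_p(0,1)\hookrightarrow\mathcal{C}_p$ with $p>2$, is excluded by \cite{AL}.
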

The approach used in the present paper relies on study of the isomorphisms between function spaces over finite and infinite intervals, see Theorem \ref{AS thm} below  (see also \cite{JMST,LT2} for related results in the setting of Banach symmetric function spaces), which goes back to a question posed by Mityagin \cite[p.99]{Mityagin} (see \cite{HSZ25} for the study of this question in the noncommutative setting). This is markedly different from the approach used in \cite{AL} and \cite{AHS}.
For various results concerning the lack of isomorphic embedding from  operator ideals into Banach lattices, we refer to \cite{Mc,Gillespie,Pisier,KP}.

 \subsection{Background and motivation}

Let $E$ be a (quasi-Banach) symmetric sequence space.
Recall that
there exists a one-to-one correspondence (the so-called Calkin correspondence) between (quasi-Banach) symmetric sequence spaces $E$ and quasi-Banach ideals $\cC_E$ in $B(H)$\cite{KS,LSZ,S14,GK}.
We denote by $\mathcal{C}_p$
the Schatten--von Neumann $p$-class.

  The main motivation of this paper 
  can be traced back to \cite[Theorem 6.1]{LP}, where Lindenstrauss and Pelczynski presented an oversimplified and incomplete proof
for $L_p(0,1)\not\hookrightarrow (\ell_2\oplus \cdots\oplus \ell_2\oplus\cdots)_{\ell_p} $  
   (see a related comment in \cite[p.108]{AL}).
 Later,
  Arazy and Lindenstrauss\cite{AL}, rectifying that oversight, showed that   the   
  function space  $$L_{p}(0,1) \not\hookrightarrow  \mathcal{C}_p$$
 when $1\le p\ne 2 <\infty $, 
 which immediately implies the result claimed in \cite[Theorem 6.1]{LP}. 
 In view of the classical Kadec--Pelczynski theorem for $L_p$, $p>2$\cite{KP70}, %one cannot hope on the existence of symmetric sequence space $G$ such that $G\hookrightarrow L_{p}(0,1)$ and $G\not\hookrightarrow \mathcal{C}_p$. 
 the only symmetric sequence spaces isomorphic to subspaces of $L_p(0,1)$ are $\ell_p$ and $\ell_2$. 
 A deep result in \cite{HOS} demonstrates that a subspace of $L_p(0,1 )$, $p>2$, either isomorphically embeds into $l_p\oplus l_2$ or contains $(\ell_2\oplus \cdots\oplus \ell_2\oplus\cdots)_{\ell_p}$,
both of which  are subspaces of $\mathcal{C}_p$. 
These observations demonstrate that the case when $p>2$ is much harder than the case when $1\le p<2$ due to the fact that   $L_q(0,1) $ embeds $L_p(0,1)$, $1\le p<2$,   isometrically when $1\le p\le q<2$\cite[Proposition 11.1.9]{AK}.

The Lorentz spaces $L_{p,q}$ (introduced by Lorentz in \cite{Lorentz50,Lorentz51}) are important generalizations of $L_p$-spaces. Their importance has been demonstrated in several areas of 
mathematics such as harmonic analysis, interpolation theory, mathematical physics, etc. (see e.g. \cite{BS,Dilworth,CD85} and references therein).
When $(p,q)\in (1,\infty ]\times [1,\infty ]$, 
$L_{p,q}$ can be equipped with an equivalent norm \cite[Chapter 4, Lemma 4.5]{BS}; when $0<p,q<\infty,$ $L_{p,q}$ is separable, see e.g. \cite[Chapter 4]{BS} and \cite{KPS}. For detailed exposition of $L_{p,q}$-spaces,  see \cite{BS} and \cite{Dilworth}. We denote the quasi-Banach ideal (Calkin) corresponding to the $\ell_{p,q}$-sequence space by $\mathcal{C}_{p,q}.$

The study of subspaces of $L_{p,q}$-spaces has gained  attention during past decades.
%Recall that a Banach space $X$ is said to be primary if whenever $X$ isomorphic to $Y\oplus Z$, then either $Y$ or $Z$ is isomorphic to $X$. It is known that $L_{p,q}(0,1)$ and $L_{p,q}(0,\infty)$ are primary spaces  (see \cite[Theorem  2.d.11.]{LT2} and \cite[Theorem A.1]{Dilworth90}) when $1<p<\infty$ and $1\le q<\infty$.
There are several criteria for  a sequence in $L_{p,q}$ to be equivalent to the $\ell_q$-basis (see e.g. \cite[Lemma 2.1]{CD89} and \cite[Proposition 4.e.3]{LT1}). 
This together with a  criterion for a symmetric sequence    generating a complemented subspace in a Banach lattice 
in \cite[Lemma 8.10]{JMST} shows  that  
$\ell_{p,q}$ 
is not isomorphic to a complemented subspace of $L_{p,q}(0,1)$,
and
$L_{p,q}(0,1)\not\approx L_{p,q}(0,\infty)$ when  $1< p<\infty$, $1\le q<\infty$, $p\ne  q $, see
 \cite[Corollary 2.2]{CD89} (see also \cite{Dilworth}).
A stronger result showing that 
$$\ell_{p,q}\not\hookrightarrow L_{p,q}(0,1)$$
 has been proven in \cite{KurS} and \cite{SS} by the so-called ``subsequence splitting lemma'' introduced and studied in \cite{S96} (see also \cite{ASS,HSS} for various applications of this techniques).
As an application,  
if $1<p<\infty$, $1\le q<\infty $,  $p\neq q$,  then the full  list of pairwise non-isomorphic $L_{p,q}$-spaces over a resonant measure space is obtained:
$$\ell_{p,q}^n, ~n=1,2,\cdots, ~\ell_{p,q},\; L_{p,q}(0,1),\; L_{p,q}(0,1)\oplus \ell_{p,q}\;\;\mbox{and}\;\; L_{p,q}(0,\infty).$$
The non-resonant case was treated in \cite{HS21}.
 
In  \cite{AHS}, the authors study the non-embeddings of symmetric function spaces into operator ideals. 
In particular, as mentioned above,   
$$L_{p,q}(0,1)\not\hookrightarrow \mathcal{C}_{p,q}$$
when $p>q>2$ or $1< p<2$ or $p=2, q\ne 2$. 
However, the remaining  cases  were left unanswered (see Question \ref{quest} above).
It is shown in  \cite{Carothers} that when $p>2$ and $1\le q\le p<\infty$, 
any symmetric function space $Y(0,1)$, which is isomorphic to a subspace of $L_{p,q}(0,1)$, is $L_{p,q}(0,1)$ or $L_2(0,1)$ (up to equivalent norms).
This demonstrates 
the paucity of symmetric subspaces in $L_{p,q}(0,1)$  and ensuing difficulty  in studying Question~\ref{quest}. 
In a recent paper\cite{HSSZ}, it is shown that 
$$L_{p,q}(0,\infty)\not\hookrightarrow \mathcal{C}_{p,q}$$
whenever $1<p<\infty$ and $1\le q<\infty $. 
However, since 
$L_{p,q}(0,1)$ is not isomorphic to $L_{p,q}(0,\infty)$, 
Question \ref{quest} remains unresolved.

In the present paper, we consider a question which is more general than Question~\ref{quest} above. 
\begin{quest}\label{quest2}
Let $p_1\ne q_1,p_2\ne q_2\in(0,\infty).$ Does $L_{p_1,q_1}(0,1) $ isomorphically embed into 
$\mathcal{C}_{p_2,q_2}$?
\end{quest}
We answer this question in Theorem~\ref{main thm}  below. Our approach  used in the proof of Theorem~\ref{main thm} is completely
 distinct from the one used in \cite{AL} (and \cite{AHS})
 and its novelty is our main contribution to the study of the geometric structure of ideals $\cC_{p,q}$  (and more broadly that of $\cC_E$). One of the key ingredients in our proof is a deep result concerning isomorphic embedding of a symmetric function space on the semi-axis into a symmetric function space on a finite interval due to Johnson et. al.  \cite[Section 8]{JMST} (see also \cite[Theorem 2.f.1]{LT2}). This theorem asserts that if 
a symmetric function space  
 $E(0,1)$ has a non-trivial Boyd index ${\bm \alpha}_E$, then 
$$Z_E^2 (0,\infty )\hookrightarrow E(0,1),$$
where  $Z_E^2(0,\infty)$  is  
the space of all measurable functions on $(0,\infty )$ such that 
$$\left\|f\right\|_{Z_E^2}:=\left\|\mu(f)\chi_{(0,1)}\right\|_E+\left\|\mu(f)\chi_{(1,\infty)}\right\|_{L_2}<\infty.$$ 
Here, $\mu(f)$ stands for the decreasing rearrangement of a measurable function $f$, see Section \ref{s:pre} below. 
This was recast by Astashkin (and the second-named author) in terms of the Kruglov property\cite{A11,AS04,AS05,AS07,AS-uspehi} (a symmetric function space $E(0,1)$ is said to have the Kruglov property if the Kruglov operator (see e.g. \cite[p.1029]{AS-uspehi} for the definition) acts boundedly on $E(0,1)$\cite[Lemma 7]{AS-uspehi}). 

The following Theorem was proved in \cite{AS-uspehi} for Banach symmetric function spaces (see Theorem 37 there). The proof in the quasi-Banach setting follows that in the Banach one  is identical. 

\begin{thm}\label{AS thm} Let $E$ be a symmetric quasi-Banach function space on $(0,1)$ having the Kruglov property. We have $Z_E^2(0,\infty)\hookrightarrow E(0,1).$
\end{thm}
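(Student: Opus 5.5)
We follow the Banach-space argument of \cite[Theorem 37]{AS-uspehi} and check that each step survives the passage to quasi-norms. The plan is to build an explicit map $T:Z_E^2(0,\infty)\to E(0,1)$ from the Kruglov construction. Fix a partition of $(0,1)$ into disjoint sets $\{A_k\}_{k\ge1}$ of measure $2^{-k}$. On $A_k$ put a copy of a probability space supporting an independent family. For $f\in Z_E^2(0,\infty)$, split $f=f\chi_{(0,1)}+\sum_{n\ge1}f\chi_{(n,n+1)}$. The piece on $(0,1)$ is transported to a set of measure comparable to $1$ by a measure-preserving dilation, and $\|\cdot\|_E$ is preserved up to constants. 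The tail $g=f\chi_{(1,\infty)}$ is sent to a sum of independent, symmetrised (Rademacher-multiplied) copies of its pieces, arranged so that the distribution of $Tg$ is that of a Poisson-type random sum $\sum_{j=1}^{N}\varepsilon_j\xi_j$. Here the $\xi_j$ are i.i.d.\ with law determined by $g$, which is exactly the Kruglov operator $K$ applied to (a rescaled) $g$. Concretely, I would realise $Tg$ as $\sum_j r_j\otimes (g\text{-pieces})$ supported on disjoint sets of small measure, so that its distribution equals $\mathcal K(\mu(g))$ up to dilation.

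The upper estimate $\|Tf\|_E\lesssim\|f\|_{Z_E^2}$ is handled in two parts. For the tail part, the Kruglov property gives $\|K h\|_E\le C\|h\|_E$. I would combine this with the standard decomposition $\mu(Kh)\le$ (sum of the large-values part, controlled by $\|\mu(h)\chi_{(0,1)}\|$) plus (a bounded part whose $E$-norm is dominated by its $L_2$-norm, via Khintchine/Rosenthal for the square-function part on a probability space). The quasi-triangle inequality only costs a constant, and the Aoki--Rolewicz theorem lets us replace $\|\cdot\|_E$ by an equivalent $r$-norm. Countable sums then converge, and all estimates hold with constants depending on $r$.

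The lower estimate $\|Tf\|_E\gtrsim\|f\|_{Z_E^2}$ proceeds as follows. The disjointness of the head part gives $\|Tf\|_E\gtrsim\|\mu(f)\chi_{(0,1)}\|_E$, after restricting to the set where the head lives and using that $E$ is a lattice. For the $L_2$ part, I would use $E(0,1)\subset L_r(0,1)$ for some small $r>0$, which follows from the Kruglov property, since it forces the lower Boyd index to be positive. I would combine this with the reverse Khintchine--Kahane inequality in $L_r$ for $r<1$, which holds for Rademacher sums: $\|\sum r_jx_j\|_{L_r}\approx\|(\sum|x_j|^2)^{1/2}\|_{L_r}$. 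A Paley--Zygmund argument then bounds the square function from below by $\|\mu(f)\chi_{(1,\infty)}\|_{L_2}$, modulo the head part. Finally, linearity and injectivity are immediate from the construction.

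The main obstacle I expect is the lower bound in the quasi-Banach regime. The Banach proof uses duality or K\"othe-dual arguments, and these are unavailable here. I would replace them by the $L_r$, $r<1$, Khintchine inequality together with the embedding $E\subset L_r$, and check that the Kruglov property really does imply this embedding without convexity. If that fails, I would try to deduce directly from the boundedness of $K$ on $E$ that $E$ contains $\exp$-type tails controlled via $L_r$.
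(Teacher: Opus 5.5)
The paper does not reprove this statement. It relies on \cite[Theorem 37]{AS-uspehi}, whose map is essentially yours: independent mean-zero copies $\tilde f_k$ on $(0,1)$ of the unit pieces $f\chi_{(k-1,k)}$, all treated alike. The entire content of that argument is the two-sided Johnson--Schechtman estimate $\|\sum_k\tilde f_k\|_E\asymp\|f\|_{Z_E^2}$, whose upper half is equivalent to the Kruglov property (cf.\ \cite{JS89,AS05}, and \cite{ASZ} for quasi-norms). Your verification of this estimate has a genuine gap in each direction.

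\emph{Lower bound.} The step ``disjointness of the head part gives $\|Tf\|_E\gtrsim\|\mu(f)\chi_{(0,1)}\|_E$'' is false. The function $\mu(f)\chi_{(0,1)}$ records the largest values of $f$ wherever they lie in $(0,\infty)$, not the values of $f$ on $(0,1)$. For $f$ supported in $(5,6)$ your head part vanishes, while $\|f\|_{Z_E^2}=\|\mu(f)\|_E$. The missing ingredient is a maximal estimate for the independent sum itself. Let $\tilde g_n$ be independent symmetric copies of the unit pieces of $g$. L\'evy's inequality gives $m(\{\max_n|\tilde g_n|>t\})\le 2m(\{|\sum_n\tilde g_n|>t\})$. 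Independence gives $m(\{\max_n|\tilde g_n|>t\})\ge(1-e^{-1})\min\{1,m(\{|g|>t\})\}$. Hence $\mu(\sum_n\tilde g_n)\ge\sigma_\tau(\mu(g)\chi_{(0,1)})$ with $\tau=(1-e^{-1})/2$, and boundedness of $\sigma_{1/\tau}$ on $E$ yields $\|\mu(g)\chi_{(0,1)}\|_E\lesssim\|Tg\|_E$. This also supplies the bound $\mu(1;g)\lesssim\|Tg\|_E$ that your Paley--Zygmund step tacitly needs. With it, your $L_r$--Khintchine treatment of the $L_2$-part works, and the special treatment of $f\chi_{(0,1)}$ becomes unnecessary.

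A side remark: $E\subset L_r$ for small $r>0$ holds for every quasi-Banach symmetric space on $(0,1)$. Indeed, $\|\sigma_2\|_{E\to E}\le 2C_E$ forces $\mu(t;f)\lesssim t^{-\log_2(2C_E)}\|f\|_E$. It does not come from the Kruglov property, which does not force ${\bm \alpha}_E>0$: $\exp(L)$ has the Kruglov property but ${\bm \alpha}_{\exp(L)}=0$.

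\emph{Upper bound.} $Tg$ cannot be obtained by applying $K$ to a rescaling of $g$. $Kh$ always has a Poisson$(1)$ number of summands, whereas for $g=\chi_{(1,L+1)}$ your $Tg$ is a sum of $L$ independent signs. Applying $K$ piecewise just returns you to a sum of independent variables, i.e.\ to the estimate to be proved.

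Nor do Khintchine or Rosenthal control the small-value part. Let $S$ be a sum of independent mean-zero terms bounded by $\lambda=\mu(1;g)$, with $\|S\|_{L_2}=\sigma$. Neither inequality gives $\|S\|_E\lesssim\lambda+\sigma$, and this bound fails without the Kruglov property: for $E=L_\infty$ and $S=n^{-1/2}\sum_{k\le n}r_k$ one has $\lambda+\sigma\le 2$ but $\|S\|_\infty=n^{1/2}$.

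What is needed is Bennett's inequality, $m(\{|S|>t\})\le 2\exp\bigl(u-(1+u)\log(1+u)\bigr)$ with $u=t/(\lambda+\sigma)$. It dominates $S$ in distribution, up to constants, by $(\lambda+\sigma)(1+N)$, where $N=K(\chi_{(0,1)})$ is a Poisson$(1)$ variable. That $N\in E$ is exactly what the Kruglov property provides. The large-value part (independent summands with total support measure at most one) is handled by the Astashkin--Sukochev comparison of independent sums with $K$ applied to their disjoint sum \cite{AS04,AS05}. This is the heart of the theorem, and your proposal only asserts it.
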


%Techniques developed in this paper are particularly effective in treating Question~\ref{quest2} above when $p_1\ne q_2$ (see Theorem \ref{main thm} below).
We provide a negative answer to Question~\ref{quest2} for most cases (in particular, we answer Question~\ref{quest} above), which      extends several results in \cite{LP,AL,AHS,HSSZ}.

\begin{thm}\label{main thm} 
Let $p_1,q_1,p_2,q_2\in (0,\infty).$ If $L_{p_1,q_1}(0,1) \hookrightarrow \mathcal{C}_{p_2,q_2},$ then $p_1=q_1=2$ or $p_1=q_1=q_2>2.$
In particular, $$L_{p_1,q_1}(0,1)\not\hookrightarrow \cC_{p_2,q_2}$$ whenever $p_1\ne q_1$. 
\end{thm}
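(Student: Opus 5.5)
The plan is to transfer the question from $L_{p_1,q_1}(0,1)$ to the semi-axis, using Theorem~\ref{AS thm}, and then exploit known obstructions for embedding function spaces on $(0,\infty)$ into $\cC_{p_2,q_2}$.

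\textbf{Step 1: pass to the semi-axis.} Let $E=L_{p_1,q_1}(0,1)$. First I dispose of the degenerate range where $E$ lacks the Kruglov property, which happens e.g. when the lower Boyd index is trivial, i.e.\ $p_1\le 1$ in the quasi-Banach range. These cases I would handle separately, either via local convexity/type obstructions, or by a direct computation with Rademacher-type or disjoint sequences. For $p_1\in(1,\infty)$ the space $E$ has the Kruglov property, so Theorem~\ref{AS thm} gives
\[
Z_E^2(0,\infty)\hookrightarrow L_{p_1,q_1}(0,1)\hookrightarrow \cC_{p_2,q_2}.
\]
Next I note that $Z_E^2(0,\infty)$ contains large pieces of both components. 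Disjointly supported functions in $(0,1)$-blocks, dilated and spread on $(0,\infty)$, generate $\ell_2$-sums of copies of $E$-structure. In particular, $Z_E^2$ contains $(\oplus_n L_{p_1,q_1}(0,1))_{\ell_2}$ and also $\ell_{p_1,q_1}$-type disjoint sequences supported in sets of small measure.

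\textbf{Step 2: extract sequence spaces.} In $\cC_{p_2,q_2}$, I would use a noncommutative subsequence splitting / Kadec--Pe\l czy\'nski dichotomy. Every normalized weakly null sequence has a subsequence that is either equivalent to a block of the $\ell_{p_2,q_2}$-basis (the disjoint or "concentrated" part), or spans $\ell_2$. Symmetric sequence spaces embedding in $\cC_{p_2,q_2}$ are thus restricted to $\ell_2$, $\ell_{q_2}$ and $\ell_{p_2,q_2}$-like spaces. On the other side, $L_{p_1,q_1}(0,1)$ contains $\ell_{q_1}$ (disjoint dilations of one function with geometrically varying supports), $\ell_2$ (Rademachers), and, through $Z_E^2$, mixtures. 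Comparing these yields $q_1\in\{2,q_2\}$ and constraints on $p_1$. Moreover, $L_{p_1,q_1}$ contains $\ell_{p_1,q_1}$-type disjoint sequences only on $(0,\infty)$; this is the gain from Step 1. I would then invoke the result of \cite{HSSZ} adapted to $Z_E^2$, since its proof should only use the disjoint sequences with supports of large measure. From this I get $p_1=q_1$ unless the embedding is trivialized, and the case $p_1=q_1$ with $q_1\neq 2$ forces $q_1=q_2>2$. The case $p_1=q_1<2$ is excluded by the $L_p$ argument via stable variables and $\ell_r$, $r\in(p_1,2)$, which is not in $\cC_{p_2,q_2}$.

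\textbf{Main obstacle.} The hardest part will be showing that $Z_E^2(0,\infty)\not\hookrightarrow\cC_{p_2,q_2}$ when $p_1\ne q_1$, i.e.\ adapting the \cite{HSSZ} argument. I would try to locate, inside $Z_E^2$, a copy of $\ell_{p_1,q_1}$ or of $L_{p_1,q_1}(0,\infty)$-like structure built from functions with large support but controlled $L_2$-tail. Then I would show via the splitting lemma in $\cC_{p_2,q_2}$ that its image would have to be equivalent to $\ell_{q_2}$ or $\ell_2$, which is impossible when $p_1\ne q_1$.
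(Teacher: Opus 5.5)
Your skeleton matches the paper's in outline. You reduce to $Z_E^2(0,\infty)=(L_{p_1,q_1}\cap L_2)(0,\infty)$ via Theorem~\ref{AS thm}, and you use sequence-space and Holub-type arguments (stable variables, $\ell_{q_1}\hookrightarrow L_{p_1,q_1}(0,1)$, Lemma~\ref{CD lemma}) to get $p_1\ge 2$ and $q_1\in\{2,q_2\}$. Two small corrections there: every $L_{p,q}(0,1)$, $p,q>0$, has the Kruglov property, so there is no degenerate range; and the stable-variable argument excludes every $p_1<2$, not only $p_1=q_1<2$.

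The decisive step, which you label the main obstacle, is missing, and the mechanism you sketch cannot work. First, $Z_E^2\subset L_{p_1,q_1}(0,1)$ contains no copy of $\ell_{p_1,q_1}$ or of $L_{p_1,q_1}(0,\infty)$ when $p_1\neq q_1$, because $\ell_{p,q}\not\hookrightarrow L_{p,q}(0,1)$ \cite{KurS,SS}. In $Z_E^2$, large total support is governed by the $L_2$-tail, which is why the result of \cite{HSSZ} for $L_{p,q}(0,\infty)$ does not transfer. Even your claimed natural block copy of $(\oplus_n L_{p_1,q_1}(0,1))_{\ell_2}$ fails for $p_1>2$. Take $N$ disjoint translates of $\epsilon^{-1/p_1}\chi_{(0,\epsilon)}$ with $N\epsilon=1$: each has norm $1$ in its block, but their sum has $Z_E^2$-norm $N^{1/p_1}$, not $N^{1/2}$.

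Second, the dichotomy you use in $\mathcal{C}_{p_2,q_2}$ is false. The diagonal units $E_{n,n}$ are WOT-null and span $\ell_{p_2,q_2}$. Only semi-normalised sequences tending to $0$ in operator norm are forced to be $\ell_{q_2}$ (Lemma~\ref{CD lemma}). Worse, in the central case $(p_1,q_1)=(p_2,q_2)$ of Question~\ref{quest}, $\ell_{p_1,q_1}$ embeds diagonally into $\mathcal{C}_{p_2,q_2}$. So exhibiting $\ell_{p_1,q_1}$-type structure can never, by itself, give a contradiction.

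The missing idea is to play finite-dimensional $\ell_{p_1}$-growth across scales against Hilbertian behaviour within a scale. For $f\in L_\infty(0,1)$ the paper takes $x^{(M)}_j=T(f\otimes\chi_{(j,j+1)}\otimes\chi_{(M,M+\frac1M)})$. Then $\|\sum_{(M,j)\in A}x^{(M)}_j\|\approx\max\{S(A)^{1/p_1}\|f\|_{p_1,q_1},S(A)^{1/2}\|f\|_2\}$. For fixed $M$ the sequence in $j$ is WOT-null with asymptotic $\ell_2$-upper estimates of size $M^{-1/2}\|f\|_2$. The Arazy-type decomposition (Theorem~\ref{fedor decomposition theorem}) and the limit-point bound (Theorem~\ref{limit point theorem}) then show that $M$-term sums of the column, row and remainder parts are $O(\|f\|_2)$. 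The diagonal block sums $A_M$ tend to $0$ in operator norm and have norm $\gtrsim\|f\|_{p_1,q_1}-O(\|f\|_2)$, so by Lemma~\ref{CD lemma} a subsequence $\{A_{M_n}\}$ is an $\ell_{q_2}$-sequence. Meanwhile $\sum_{n\le N}A_{M_n}$ differs by $O_N(\|f\|_2)$ from an element of norm $\approx\max\{N^{1/p_1}\|f\|_{p_1,q_1},N^{1/2}\|f\|_2\}$. If $p_1\ne q_2$, fixing $N$ large gives $\|f\|_{p_1,q_1}\lesssim\|f\|_2$ uniformly on $L_\infty(0,1)$, which is absurd for $p_1>2$. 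Hence $p_1=q_2$ (Lemma~\ref{final lemma}), and with $q_1=q_2$ this gives $p_1=q_1=q_2>2$.

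Finally, your plan never excludes $L_{2,q}(0,1)\hookrightarrow\mathcal{C}_{p_2,q}$ for $q\neq 2$. For $p_1=2$ the two growth rates coincide and the mechanism above degenerates. The paper therefore needs a separate argument: the limit $a$ of $\mu(a_m)$ for translates of a single $f$, the two-sided estimates of Lemma~\ref{general new lemma}, and explicit computations in $Z^2_{L_{2,q}}$ for $q>2$ and $q<2$.
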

Theorem \ref{main cor}  above follows from Theorem \ref{main thm}  and \cite[Theorem 6]{AL}.
%If $p_1=q_1=2,$ then $L_{p_1,q_1}(0,1)=L_2(0,1)\hookrightarrow\mathcal{C}_{p_2,q_2}.$ The only case when the Question \ref{quest2} remains open is $p_1=q_1=q_2>2.$ In other words, we do not know whether $L_q(0,1)\hookrightarrow\mathcal{C}_{p,q}$ for $q>2.$ Only the case when $2<p\leq q$ is treated (in the negative) in Proposition 4.1 in \cite{AHS}.
%  {\color{red}Keep it or not? becasue we will remove the last part of this part. A slight modification of the proof  of \cite[Proposition 4.1]{AHS} yields that $L_q(0,1)\not\hookrightarrow \cC_{p,q}$ when $q>2$ and $1<p\le 2$. }

\section{Some properties of $L_{p,q}$-spaces}\label{s:pre}

In this section, we collect some properties of (commutative and noncommutative) $L_{p,q}$-spaces, which are useful in the present paper.

Let $L_0(I)$ be the space of Lebesgue measurable functions either on $I=(0,1)$ or $I=(0,\infty) $ (equipped with Lebesgue measure $m$ and with identification $m$-a.e.) or the set $I=\mathbb{Z}_+$ of all positive integers (equipped with the counting measure). Denote by $S(I)$ the subset of $L_0(I)$ which consists of all functions (or sequences) $f$ such that the distribution function 
$$d_{|f|}(s)= m ( \{t\in I :|f|(t)>s\})$$ 
is finite for some $s>0.$ We denote by $\mu(f)$ the decreasing right-continuous rearrangement of $|f|$ given by 
$$\mu(t;f)=\inf\left\{s \ge 0:\ d_{|f|}(s)\le t \right\},\quad t\ge 0.$$
\begin{definition}\cite{KPS,LT2,BS}
A subspace $E(I)$ of $S(I)$ is called a  quasi-Banach symmetric function space if
\begin{enumerate}[{\rm (i)}]
\item $\left(E(I),\norm{\cdot}_E\right)$ is a quasi-Banach space;
\item  $f\in E(I)$ and $\mu(g)\le\mu(f)$ implies that $g\in E(I)$ and $\norm{g}_E\le \norm{f}_E$.
\end{enumerate}
\end{definition}

\begin{defi}\label{lpq def}\cite[Definition 4.4]{BS}
Let $p,q>0.$ We define
$$L_{p,q}(I)=\left\{f\in S(I):\ \int_0^{\infty}
\mu(t;f)^q dt^{\frac{q}{p}}<\infty\right\}.$$
We equip $L_{p,q}(I)$ with the quasi-norm
$$\norm{f}_{L_{p,q}(I)}:= 
\left( \int_0^{\infty}
\mu(t;f)^q dt^{\frac{q}{p}}\right)^{\frac1q},\quad f\in L_{p,q}(I).$$
If $I=\mathbb{Z}_+$ equipped with the counting measure, then $L_{p,q}(I)$ is denoted by $\ell_{p,q}.$ 
\end{defi}

In what follows, $\{E_{i,j}\}_{i,j\geq0}\subset B(H)$ are the standard matrix units. We denote the uniform operator norm on $B(H)$ by $\left\| \cdot \right \|_{\infty}$ and we denote the identity operator on $H$ by ${\bf 1}.$

A family $\{x_i\}_{i=1}^\infty\subset B(H)$ is called orthogonal from the right (respectively, from  the left) if $x_ix_j^{\ast}=0$ (respectively, $x_i^{\ast}x_j=0$) for all $i,j\in\mathbb{N},$ $i\neq j.$ 
%We caution the reader: a sequence simultaneously orthogonal on the left and on the right is not necessarily orthogonal.

Let $P(B(H))$ be the lattice of all projections in $B(H)$.
For each $x\in B(H)$, the generalized singular value function $t\mapsto \mu(t;x)$, $t>0$, is given by
$$\mu(t;x):=\inf \left\{ \norm{x({\bf 1} - e)}_\infty  : e\in P(B(H)) ,~{\rm Tr}(e)\le t\right\},$$
where ${\rm Tr}$ stands for the standard trace of $B(H)$. 

A $*$-ideal $\cI$ in $B(H)$ is said to be a quasi-Banach operator ideal  if it is equipped with a complete quasi-norm $\left\|\cdot\right\|_{\mathcal{I}}$ such that the mappings $(A,B)\to AB$ and $(A,B)\to BA,$ $A\in\mathcal{I},$ $B\in B(H)$ are continuous from $\cI\times B(H)$ to $\cI$ and from $B(H)\times \cI$ to $\cI$, respectively. Passing to an equivalent quasi-norm if needed, we can always achieve that $$\left\|AB\right\|_{\mathcal{I}},\left\|BA\right\|_{\mathcal{I}}\leq\left\|A\right\|_{\mathcal{I}}\left\|B\right\|_{\infty},\quad A\in\mathcal{I},\quad B\in B(H).$$
A quasi-Banach operator ideal $\cI$ in $B(H)$ is said to be symmetric if the quasi-norm of $\cI$ is monotone with respect to the generalized singular value function, i.e., if $x\in \cI$, $y\in B(H)$ with $\mu(y)\le \mu(x)$, then $y\in \cI$ and $\norm{y}_\cI \le \norm{x}_\cI$.
The quasi-norm on any 
  proper  quasi-Banach operator ideal in $B(H)$ is necessarily symmetric\cite[Theorem 2.6]{BCLS}. 
%\footnote{The case when $\cI\subset K(H)$ follows from \cite[Theorem 2.6]{BCLS}. 
%If $\cI\not\subset K(H)$, then $\cI=B(H)$ (see e.g. \cite[Proposition 2.3]{BCLS} or \cite[Proposition 1]{Garling}).
%In this case, for any $\varepsilon>0$, we have $
 %(\norm{x}_\infty -\varepsilon)\norm{p}_\cI \le \norm{ x e^{|x|}( \norm{x}_\infty -\varepsilon ,\norm{x}_\infty ]}_\cI \le 
%\norm{x}_\cI \le \norm{x}_\infty \norm{{\bf 1}}_\cI$ for any one-dimensional projection $p\in B(H)$.
%}.
%For convenience, throughout this paper, we always consider quasi-Banach operator ideals $\cI$ equipped with symmetric quasi-norms. 
Moreover, we may assume that $\norm{p}_\cI =1$ 
 for any one-dimensional projection  $p$. 

Recall that a quasi-Banach operator ideal $\cI$ in $B(H)$ is said to have the Fatou property if for any increasing positive net $x_j \in \cI$ with $x_j\uparrow x$, we have 
$\norm{x}_\cI =\sup_j \norm{x_j }_\cI$. 
For a net  $\{x_j\}_{j }$ in a quasi-Banach operator ideal $\cI$ 
 having the Fatou property, if $x_j\to_j x$ in the weak operator topology, then $\norm{x}_\cI \le \sup_j\norm{x_j}_\cI$. 
Indeed\footnote{The case for Banach operator ideals in $B(H)$ is well-known, see e.g. \cite[Theorem 4.5.6]{DPS}.}, for any finite-dimensional orthogonal projection $p$ in $B(H)$, we have $p x_j p \to_j  pxp $ in $\cI$.
 Let $x=u|x|$ be the polar decomposition and let 
 $\{p_n\}$
 be a  sequence of finite-dimensional orthogonal projections in $B(H)$ with $p_n\uparrow {\bf 1}$.  
%We have $ |x|^{\frac12} p_n |x|^{\frac12} \uparrow |x|$. 
%Let $ y_n :=  p_n |x|^{\frac12} $. % = u_n \left| p_n |x|^{\frac12} \right| $ be the polar decomposition. 
%We have
%$  |y^*_n| =u_n |y_n| u^*_n $ and $  |y^*_n|^2 =u_n |y_n|^2 u^*_n $.
Hence, 
$\sup_j \norm{x_j}_\cI \ge \norm{u^* x_j }_\cI \ge \norm{ p_n u^* x_ jp_n }_\cI
\to_j \norm{  p_n | x | p_n }_\cI 
%=\norm{y_ny_n^*}_\cI
%=\norm{y_n^*y_n}_\cI
\stackrel{\tiny\mbox{\cite[Prop.3.2.10(ii)]{DPS}}}{=}\norm{|x|^{\frac12} p_n |x|^{\frac12}}_\cI \stackrel{\tiny \mbox{Fatou property}}{\uparrow_n} \norm{|x|}_\cI=\norm{x}_\cI
$.\label{Fatou'}

\begin{defi}
Let $p,q>0.$ The ideal $\mathcal{C}_{p,q}$ in $B(H)$ consists of all $x\in B(H)$ with $\mu(x)\in L_{p,q}(0,\infty).$ Setting $\left\|x\right\|_{\cC_{p,q}}=\left\|\mu(x)\right\|_{L_{p,q}(0,\infty)}$ for $x\in \mathcal{C}_{p,q}$ turns $ \mathcal{C}_{p,q}$ into a quasi-Banach ideal, see e.g. \cite{S14}.
\end{defi}

The following is an important property of $L_{p,q}$-spaces. 
\begin{lem}\label{original CD lemma}\cite{Dilworth,CD88} Let $0<p,q<\infty$.
If $\{x_M\}_{M\geq1}\subset \ell_{p,q}$ is such that $x_M\to0$ in the uniform norm as $M\to \infty $, then there exists a strictly increasing $\mathbb{N}$-valued sequence $\{M_n\}_{n\geq1}$ such that
$$(c_{p,q} ^{(1)})^{-1}\left\|{\bm \alpha}\right\|_{\ell_q}\cdot \inf_M \left\|x_M\right\|_{\ell_{p,q}}\leq \left\|\sum_{n\ge 1} {\bm \alpha}(n) x_{M_n}\right\|_{\ell_{p,q}}\leq c _{p,q}^{(1)}\left\|{\bm \alpha}\right\|_{\ell_q} \cdot \sup_M \left\|x_M\right\|_{\ell_{p,q}},$$
 where $c_{p,q} ^{(1)}$ is a constant depending on $p,q$ only.  
\end{lem}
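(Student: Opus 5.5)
The plan is a gliding-hump argument. First I pass to a subsequence of $\{x_M\}$ that is almost disjointly supported. Then I show that, after a further lacunary thinning, the disjoint blocks behave in $\ell_{p,q}$ like an $\ell_q$-sum of their norms. Set $a:=\inf_M\|x_M\|_{\ell_{p,q}}$ and $A:=\sup_M\|x_M\|_{\ell_{p,q}}$; if $a=0$ the lower estimate is empty.

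\emph{Step 1 (almost disjoint blocks).} Since $q<\infty$, finitely supported sequences are dense in $\ell_{p,q}$. I construct $M_1<M_2<\cdots$ and finite sets $F_1\subset F_2\subset\cdots$ of indices inductively.
\begin{enumerate}[{\rm (a)}]
\item Given $F_{n-1}$, choose $M_n$ so large that $\|x_{M_n}\chi_{F_{n-1}}\|_{\ell_{p,q}}\le \|x_{M_n}\|_\infty |F_{n-1}|^{1/p}<\varepsilon_n$. This uses only that $x_M\to0$ uniformly and $F_{n-1}$ is finite.
\item Then choose $F_n\supset F_{n-1}$ with $\|x_{M_n}\chi_{\mathbb{Z}_+\setminus F_n}\|_{\ell_{p,q}}<\varepsilon_n$.
\end{enumerate}
Put $y_n:=x_{M_n}\chi_{F_n\setminus F_{n-1}}$. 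The $y_n$ are disjointly supported, and $\|x_{M_n}-y_n\|_{\ell_{p,q}}<2^{C}\varepsilon_n$ for a constant $C$ coming from the quasi-triangle inequality. Since $\ell_{p,q}$ is $r$-normable for some $r\in(0,1]$ (Aoki--Rolewicz), choosing $\varepsilon_n$ with $\sum_n\varepsilon_n^r$ small relative to $a$ lets the perturbation be absorbed. It therefore suffices to prove both estimates for $\{y_n\}$, with $a/2\le\|y_n\|\le 2A$.

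\emph{Step 2 (lacunary scales).} Because $\|y_n\|_\infty\to0$ while $\|y_n\|_{\ell_{p,q}}\ge a/2$, the mass of $\mu(y_n)$ must sit at ever larger positions $t$. I thin the sequence so that, with $N_n:=|F_n|$, two things hold.
\begin{enumerate}[{\rm (i)}]
\item The part of $\mu(y_n)$ on $(0,K_nN_{n-1})$ contributes at most $\varepsilon_n$ to its norm. This is again possible via (a), by making $\|x_{M_n}\|_\infty$ tiny compared with $N_{n-1}$.
\item $N_{n-1}\ll$ the bulk of $\mu(y_n)$, which lies in $[K_nN_{n-1},N_n]$ with $K_n\to\infty$ fast.
\end{enumerate}

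\emph{Step 3 (the $\ell_q$ estimate), the main obstacle.} For a disjoint sum $f=\sum\alpha_n y_n$, use the distribution form
$$\|f\|_{\ell_{p,q}}^q=q\int_0^\infty s^{q-1}d_f(s)^{q/p}\,ds,\qquad d_f=\sum_n d_{\alpha_ny_n}.$$
The difficulty is that $(\sum_n d_n)^{q/p}$ is not $\sum_n d_n^{q/p}$, and the coefficients $\alpha_n$ destroy any level ordering of the values. I would instead work on the $t$-side: $\mu(f)$ is the rearrangement of the union of the blocks, and the weight $dt^{q/p}$ is doubling.

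For the \emph{upper bound}, I compare $\mu(f)$ with the concatenation of the pieces $\mu(\alpha_ny_n)\chi_{[K_nN_{n-1},N_n]}$. Shifting block $n$ by at most $\sum_{k<n}N_k\le 2N_{n-1}\ll K_nN_{n-1}$ changes $t^{q/p}$-increments by a factor close to $1$. This yields $\|f\|^q\le c\sum_n|\alpha_n|^q\|y_n\|^q$, modulo the $\varepsilon$-tails from (i).

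For the \emph{lower bound}, I use that $\mu(f)\ge$ the decreasing rearrangement of each single block. On the window $[K_nN_{n-1},N_n]$, block $n$ controls $\int\mu(f)^q\,dt^{q/p}$ from below up to the contribution of the at most $2N_{n-1}$ preceding entries, which is negligible by lacunarity. The windows are disjoint, so summing over $n$ gives $\|f\|^q\ge c^{-1}\sum_n|\alpha_n|^q\|y_n\|^q\ge c^{-1}(a/2)^q\|\alpha\|_{\ell_q}^q$.

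Making these window comparisons uniform in $\alpha$ is the delicate point. The largest terms $|\alpha_n|\,\|y_n\|_\infty$ may reorder blocks, so I would handle this by the same separation-of-scales argument applied to the sets $\{\mu(f)>s\}$. The resulting constant $c^{(1)}_{p,q}$ depends only on $p,q$, because the errors are summable with constants determined by the $r$-normability and by the doubling constant $2^{q/p}$.
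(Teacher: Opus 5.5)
The paper gives no proof of this lemma; it is quoted from Carothers--Dilworth \cite{CD88} (see also \cite{Dilworth}). Your route is the standard one: a gliding hump to a disjoint block sequence $\{y_n\}$ whose rearrangements live at separated scales, followed by $\ell_q$ estimates. Steps 1--2 and the lower estimate are sound. The lower estimate needs no correction term at all: $|f|\ge|\alpha_ny_n|$ pointwise gives $\mu(f)\ge|\alpha_n|\mu(y_n)$. On the disjoint windows $[N_{n-1},N_n]$, condition (i) with $K_n=1$ then already yields $\|f\|_{\ell_{p,q}}^q\ge\sum_n|\alpha_n|^q\bigl(\|y_n\|_{\ell_{p,q}}^q-\varepsilon_n^q\bigr)$.

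\textbf{The gap is the upper estimate.} Your claim that block $n$ is shifted by at most $\sum_{k<n}N_k$ is false. In $\mu(f)$, the entries of block $n$ are preceded by every entry of any \emph{later} block whose scaled values are larger. Take flat blocks $y_n=b_n^{-1/p}\chi_{B_n}$ with $|B_n|=b_n$, $b_{n+1}\gg b_n$, and $|\alpha_{n+1}|b_{n+1}^{-1/p}>|\alpha_n|b_n^{-1/p}$. Then block $n$ sits on $[b_{n+1},b_{n+1}+b_n)$. For $q>p$ it contributes about $\frac qp(b_{n+1}/b_n)^{q/p-1}|\alpha_n|^q$ to $\|f\|_{\ell_{p,q}}^q$, which is not $O(|\alpha_n|^q\|y_n\|_{\ell_{p,q}}^q)$. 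So no window-by-window comparison can work; the excess has to be charged to block $n+1$. Your last sentence points toward level sets, but that is exactly where the whole upper bound lives, and you do not carry it out.

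\textbf{How to close it.} Let $u_n$ be the restriction of $y_n$ to the positions of its $N_{n-1}$ largest entries, and set $w_n=y_n-u_n$.
By (i), $\|u_n\|_{\ell_{p,q}}\le\varepsilon_n$, so $\sum_n\alpha_nu_n$ is absorbed by $r$-normability as in Step 1.
Now put $W=\sum_n\alpha_nw_n$. For $s>0$, let $n^*$ be the largest $n$ with $d_{\alpha_nw_n}(s)>0$.
Two facts bound $d_W(s)$:
\begin{enumerate}[{\rm (1)}]
\item The supports of $w_n$ for $n<n^*$ lie in $F_{n^*-1}$.
\item $w_{n^*}$ has an entry of modulus $>s/|\alpha_{n^*}|$ outside the $N_{n^*-1}$ largest positions of $y_{n^*}$. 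Hence all those positions carry entries of $\alpha_{n^*}y_{n^*}$ of modulus $>s$, i.e.\ $d_{\alpha_{n^*}y_{n^*}}(s)\ge N_{n^*-1}$.
\end{enumerate}
Consequently
\[
d_W(s)\le d_{\alpha_{n^*}y_{n^*}}(s)+N_{n^*-1}\le 2\,d_{\alpha_{n^*}y_{n^*}}(s),\qquad\text{hence}\qquad d_W(s)^{q/p}\le 2^{q/p}\sum_n d_{\alpha_ny_n}(s)^{q/p}.
\]
Integrate against $d(s^q)$, using $\|g\|_{\ell_{p,q}}^q=\int_0^\infty d_g(s)^{q/p}\,d(s^q)$. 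This gives $\|W\|_{\ell_{p,q}}^q\le 2^{q/p}\sum_n|\alpha_n|^q\|y_n\|_{\ell_{p,q}}^q$ for all $p,q$. For $q\le p$ this is immediate anyway, by subadditivity of $u\mapsto u^{q/p}$ applied to $d_f=\sum_nd_{\alpha_ny_n}$. With this argument the lacunarity $K_n\to\infty$ is unnecessary; $K_n=1$ suffices for both estimates.
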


The proof of the following lemma in the setting of separable 
Banach ideals can be found in \cite[Theorem 2.5]{CDS} or \cite[Lemma 7.1]{DPS2016}.
The proof for the setting of separable quasi-Banach ideals is identical.
\begin{lem}\label{equivalent sequence lemma} Let $\mathcal{I}$ be a separable quasi-Banach ideal in $B(H).$ If $\{x_M\}_{M\geq1}\subset\mathcal{I}$ is a normalised sequence such that $x_M\to0$ in the uniform norm as $M\to \infty $, then there exist a strictly increasing $\mathbb{N}$-valued sequence $\{M_n\}_{n\geq1}$ and a  sequence $\{y_M\}_{M\geq1}$ of mutually disjoint elements such that $y_M\to0$ in the uniform norm 
as $M\to \infty $
and such that 
$$c_{\mathcal{I}}^{-1}
\left\|\sum_{n\geq1}{\bm \alpha}(n)y_{M_n}\right\|_{\mathcal{I}}\leq \left\|\sum_{n\geq1}{\bm \alpha}(n)x_{M_n}\right\|_{\mathcal{I}}\leq c_{\mathcal{I}}
\left\|\sum_{n\geq1}{\bm \alpha}(n)y_{M_n}\right\|_{\mathcal{I}}.$$
Hence, $c_{\mathcal{I}}$ is a constant depending on $\cI$ only. 
\end{lem}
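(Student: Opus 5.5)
This is Lemma \ref{equivalent sequence lemma}, the noncommutative analogue of the Kadec--Pełczyński gliding hump principle. I would prove it by a perturbation argument: replace the $x_{M_n}$ by pieces cut out by finite-rank projections, then use that such pieces are disjoint up to small error.

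\emph{Step 1: reduction to finite-rank data.} Separability of $\mathcal{I}$ means finite-rank operators are dense and the quasi-norm is order continuous. So for any finite-rank projection $p$ and any $\varepsilon>0$ there is a finite-rank projection $q\ge p$ such that $\|x-qxq\|_{\mathcal I}<\varepsilon$ for any fixed $x$.

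\emph{Step 2: the gliding hump for a fixed finite-rank projection.} For a finite-rank projection $p$ and $x_M\to0$ uniformly:
\begin{itemize}
\item $\|px_M\|_{\mathcal I}\le \mathrm{rank}(p)\,\|x_M\|_\infty\to0$, and similarly $\|x_Mp\|_{\mathcal I}\to0$;
\item this holds because a rank-$k$ operator's ideal quasi-norm is controlled by $C_k$ times its uniform norm, with $C_k$ depending only on $k$ and the quasi-triangle constant.
\end{itemize}

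\emph{Step 3: inductive choice of $M_n$ and projections.} I choose $M_n$ and finite-rank projections $q_1\le q_2\le\cdots$ inductively, with $\varepsilon_n$ decaying fast. Set $r_n=q_n-q_{n-1}$ and define $y_{M_n}=r_n x_{M_n} r_n$. For indices not of the form $M_n$, set $y_M=0$, or extend in the same way; only the subsequence matters.
\begin{itemize}
\item Given $q_{n-1}$, Step 2 lets me pick $M_n$ so large that $\|q_{n-1}x_{M_n}\|_{\mathcal I}$ and $\|x_{M_n}q_{n-1}\|_{\mathcal I}$ are $<\varepsilon_n$.
\item Step 1 then gives $q_n\ge q_{n-1}$ with $\|x_{M_n}-q_nx_{M_n}q_n\|_{\mathcal I}<\varepsilon_n$.
\item Hence $\|x_{M_n}-y_{M_n}\|_{\mathcal I}\lesssim\varepsilon_n$.
\item The $y_{M_n}$ are mutually disjoint (orthogonal both left and right), and $\|y_M\|_\infty\le\|x_M\|_\infty\to0$.
\end{itemize}

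\emph{Step 4: the equivalence estimate.} This is the main obstacle, because the quasi-norm is not subadditive. I would pass to an equivalent $r$-norm via the Aoki--Rolewicz theorem, so that $\|\sum z_n\|^r\le\sum\|z_n\|^r$.
\begin{itemize}
\item Normalisation gives $|\bm\alpha(n)|\le C\|\sum_k\bm\alpha(k)y_{M_k}\|_{\mathcal I}$ by a disjoint-projection argument: compress by $r_n$, noting $\|y_{M_n}\|\ge 1-O(\varepsilon_n)$.
\item Similarly $|\bm\alpha(n)|\le C\|\sum_k\bm\alpha(k)x_{M_k}\|_{\mathcal I}$, by compressing $x$-sums with $r_n$.
\item Then
$$\Big\|\sum\bm\alpha(n)(x_{M_n}-y_{M_n})\Big\|^r\le \sup_n|\bm\alpha(n)|^r\sum_n \varepsilon_n^r,$$
which is a small multiple of either sum once $\sum\varepsilon_n^r$ is small.
\item The two-sided inequality then follows with a constant depending only on $\mathcal I$. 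Convergence of the series is also handled by this perturbation.
\end{itemize}
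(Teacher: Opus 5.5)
Your argument is correct, and its skeleton matches the route the paper takes. The paper gives no written proof; it defers to \cite[Theorem 2.5]{CDS} and \cite[Lemma 7.1]{DPS2016}. After passing to a subsequence, these provide pairwise orthogonal projections $p_M$ with $x_M-p_Mx_Mp_M\to0$ in $\mathcal I$. One then sets $y_M=p_Mx_Mp_M$, and the equivalence becomes a perturbation exercise.

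What you do differently is prove the disjointification step yourself rather than import it. You use two facts. First, in $B(H)$, uniform convergence gives $\|qx_M\|_{\mathcal I},\|x_Mq\|_{\mathcal I}\to0$ for every fixed finite-rank $q$. Second, density of finite-rank operators lets you trap $x_{M_n}$ in a finite-rank corner $q_nx_{M_n}q_n$ up to $\varepsilon_n$. The gliding hump with $r_n=q_n-q_{n-1}$ is then essentially the construction in the proof of Theorem~\ref{fedor decomposition theorem}. It is the special case in which uniform convergence forces the off-diagonal pieces $b_m,c_m$ to tend to zero, and it does not need the Fatou property.

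Your route gives a self-contained verification in the quasi-Banach setting, where the paper only asserts that the Banach-space proofs carry over. The cited results, in turn, buy generality: they work in semifinite von Neumann algebras and under convergence in measure.

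Two minor remarks.
\begin{enumerate}
\item In the original quasi-norm you get $\|y_{M_n}\|_{\mathcal I}\ge C_{\mathcal I}^{-1}-O(\varepsilon_n)$ rather than $1-O(\varepsilon_n)$. This is harmless.
\item The second bullet of Step~4 is superfluous. The bound $\sup_n|\bm\alpha(n)|\lesssim\|\sum_k\bm\alpha(k)y_{M_k}\|_{\mathcal I}$, combined with your $r$-norm bound on $\sum_n\bm\alpha(n)(x_{M_n}-y_{M_n})$, already yields both inequalities by absorption. If you keep that bullet, the cross terms $r_nx_{M_k}r_n$ with $k\neq n$ must be estimated. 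Your construction does provide what is needed: $\|x_{M_k}-q_kx_{M_k}q_k\|_{\mathcal I}<\varepsilon_k$ for $k<n$, and $\|q_{k-1}x_{M_k}\|_{\mathcal I}<\varepsilon_k$ for $k>n$.
\end{enumerate}
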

\begin{comment}\begin{proof} By the quasi-triangle inequality, it suffices to prove the assertion for self-adjoint sequences only. Without loss of generality, we may assume that  $\|x_M\|_{\mathcal{I}}=1$ for $M\geq 1.$ Using Theorem 2.5 in \cite{CDS} and passing to a subsequence if needed, we find a sequence $\{p_M\}_{M\geq1}$ of pairwise orthogonal projections such that $(1-p_M)x_M\to0$ in $\mathcal{I}$ as $M\to\infty$ (strictly speaking, Theorem 2.5 in \cite{CDS} is for Banach ideals, however the proof for quasi-Banach ideals is identical). Taking the adjoints, we obtain $x_M(1-p_M)\to0$ in $\mathcal{I}$ as $M\to\infty.$ Consequently, $x_M-p_Mx_Mp_M\to0$ in $\mathcal{I}$ as $M\to\infty.$ Set $y_M=p_Mx_Mp_M$ for $M\geq1.$ The rest of the proof is an elementary exercise.
\end{proof}
\end{comment}

The next result extends Lemma \ref{original CD lemma} to a non-commutative setting, which
is a consequence of  Lemmas \ref{original CD lemma} and   \ref{equivalent sequence lemma} above. %For non-semi-normalised sequences, it is an elementary exercise.

\begin{lem}\label{CD lemma} Let $0<p,q<\infty$. If $\{x_M\}_{M\geq1}\subset  \mathcal{C}_{p,q}$ are such that $x_M\to0$ in the uniform norm as $M\to \infty $, then there exists a strictly increasing $\mathbb{N}$-valued sequence $\{M_n\}_{n\geq1}$ such that
$$ (c_{p,q}^{(2)})^{-1}\left\|{\bm \alpha}\right\|_{\ell_q}
\cdot\inf_{M\geq1}\left\|x_M\right\|_{\mathcal{C}_{p,q}}
    \le \left\|
    \sum_{n\geq1}{\bm \alpha}(n)x_{M_n}\right\|_{\mathcal{C}_{p,q}}
\leq c_{p,q}^{(2)}\left\|{\bm \alpha}\right\|_{\ell_q}\cdot\sup_{M\geq1}
\left\|x_M\right\|_{\mathcal{C}_{p,q}},$$
where $c_{p,q}^{(2)}$ is a constant depending on $p,q$ only. 
\end{lem}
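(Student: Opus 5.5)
The plan is to deduce Lemma~\ref{CD lemma} by transporting the problem from the ideal to the sequence space $\ell_{p,q}$: first use Lemma~\ref{equivalent sequence lemma} to replace $\{x_M\}$ by a disjoint sequence, and then apply Lemma~\ref{original CD lemma} to the singular value sequences of the disjoint elements.

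\emph{Step 1: reductions.} If $\inf_M\|x_M\|_{\mathcal{C}_{p,q}}=0$, the lower estimate is trivial. If $\sup_M\|x_M\|_{\mathcal{C}_{p,q}}=\infty$, the upper estimate is trivial, so I assume $\sup_M\|x_M\|<\infty$. The point is that Lemma~\ref{equivalent sequence lemma} is stated for normalised sequences. So, passing to a subsequence, I assume $\|x_M\|_{\mathcal{C}_{p,q}}\to a\in[0,\infty)$. If $a=0$, I choose a rapidly decreasing subsequence with $\sum_n \|x_{M_n}\|^{r}$ small, where $r$ is the exponent from the Aoki--Rolewicz theorem. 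The $r$-triangle inequality together with $|{\bm\alpha}(n)|\le\|{\bm\alpha}\|_{\ell_q}$ then gives the upper bound. The degenerate cases aside, I assume $a>0$ and write $x_M=\|x_M\|\,\tilde x_M$ with $\tilde x_M$ normalised and still tending to $0$ uniformly. Since $\|x_M\|\to a$, the weights $\|x_{M_n}\|$ lie in $[a/2,2a]$ for large $M$. The lattice (symmetric) structure of $\ell_{p,q}$, used in Step 3, lets these weights be absorbed into ${\bm\alpha}$ at the cost of a constant, after comparison with $\inf$ and $\sup$. Here I must make sure the resulting constant depends only on $p,q$, and not on the sequence.

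\emph{Step 2: disjointification.} The ideal $\mathcal{C}_{p,q}$ is separable for $0<p,q<\infty$, since $\ell_{p,q}$ is separable. So Lemma~\ref{equivalent sequence lemma} applies to $\{\tilde x_M\}$. It gives a subsequence $\{M_n'\}$ and mutually disjoint $y_M\to0$ uniformly, with $\|\sum{\bm\alpha}(n)\tilde x_{M'_n}\|\approx\|\sum{\bm\alpha}(n)y_{M'_n}\|$ up to the constant $c_{\mathcal{C}_{p,q}}$. From this equivalence with single-term ${\bm\alpha}$, the norms $\|y_M\|$ are bounded above and below by $c^{\pm1}$.

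\emph{Step 3: passing to $\ell_{p,q}$.} The $y_M$ are disjoint, meaning they live on pairwise orthogonal left and right supports. Hence $\mu\left(\sum_n\beta_n y_{M'_n}\right)$ is the decreasing rearrangement of the concatenation of the sequences $|\beta_n|\mu(y_{M'_n})$. I realise this in $\ell_{p,q}$ by placing $\mu(y_{M'_n})$ on disjoint blocks of $\mathbb{Z}_+$. Call the resulting sequences $z_n\in\ell_{p,q}$; they are disjointly supported, satisfy $\|z_n\|_\infty=\|y_{M'_n}\|_\infty\to0$, and
\[
\Big\|\sum\beta_n y_{M'_n}\Big\|_{\mathcal{C}_{p,q}}=\Big\|\sum\beta_n z_n\Big\|_{\ell_{p,q}}.
\]
Applying Lemma~\ref{original CD lemma} to $\{z_n\}$ yields a further subsequence with $\ell_q$-estimates. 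Composing the subsequences and collecting the constants $c^{(1)}_{p,q}$, $c_{\mathcal{C}_{p,q}}$ and the quasi-triangle constant gives $c^{(2)}_{p,q}$.

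\emph{Main obstacle.} I expect the hardest step to be the bookkeeping in Step 1, namely keeping the constants independent of the particular sequence when it is not normalised. The stated bounds carry the factors $\inf$ and $\sup$ only, not the individual weights $\|x_{M_n}\|$. Passing to a subsequence along which the norms converge handles this: at most a factor $2$ is lost, so it is harmless.
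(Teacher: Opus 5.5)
Your plan matches the paper's argument. The paper records this lemma as a direct consequence of Lemma~\ref{equivalent sequence lemma} (disjointification in the separable ideal $\mathcal{C}_{p,q}$) followed by Lemma~\ref{original CD lemma} applied to the disjointly placed singular value sequences, which is your Steps 2--3. Your Step 1 supplies the reduction to normalised sequences that the paper leaves implicit.

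There is one small slip in Step 1. When $\sup_M\|x_M\|_{\mathcal{C}_{p,q}}=\infty$ you still owe the lower estimate. It is easily recovered: pass to a subsequence with $\|x_M\|_{\mathcal{C}_{p,q}}\to\infty$, use the same normalisation $x_M/\|x_M\|_{\mathcal{C}_{p,q}}$ (which still tends to $0$ uniformly), and absorb the weights via the lattice property of $\ell_q$.
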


Note that the Kruglov operator $K$ is bounded on $L_{p,q}(0,1),$ $p,q>0$ (see \cite{AS07,AS-uspehi,ASZ} for the definition). This can be seen by combining \cite[Theorem 7]{ASZ} and the Johnson--Schechtman inequality~\cite{JS89}. 
Below, we provide an elementary criterion for a quasi-Banach symmetric function space to have the Kruglov property.

Recall that the lower Boyd index of a quasi-Banach symmetric function space $E(0,1)$ is given by 
$${\bm \alpha}_E:=\lim_{t\to 0}  \frac{\log \norm{\sigma_t}_E}{\log t},$$
where the limit exists by the Fekete lemma\cite[Lemma 3.4.6]{LSZ}. 
Here, $\sigma_t,$ $t>0,$ is the dilation operator, i.e., 
$$(\sigma_tf)(s)=
\begin{cases}
f(\frac{s}{t}),& s\le \min(1,t),\\
0,& t<s\le 1.  
\end{cases}$$
\begin{lem}\label{estimate for Kf} For every measurable function $f$ on $(0,1),$ we have
$$\mu(Kf)\leq \mu(f\otimes (K(\chi_{(0,1)})+1)).$$
Here, $K$ stands for the Kruglov operator and $\chi_{_I}$ is the characteristic function of $I\subset (0,1)$. 
\end{lem}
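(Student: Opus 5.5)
The plan is to compare distribution functions directly, using the probabilistic description of the Kruglov operator. I recall that, after identifying $(0,1)$ with a suitable probability space through a measure-preserving isomorphism, $Kf$ has the same distribution as
$$\sum_{k=1}^{N} f_k,$$
where $f_1,f_2,\dots$ are independent copies of $f$ and $N$ is a Poisson random variable with parameter $1$, independent of $\{f_k\}$. In particular, $K(\chi_{(0,1)})$ is distributed as $N$. Likewise, $f\otimes(K(\chi_{(0,1)})+1)$ has the distribution of the product $|f|\cdot(N+1)$ with $f$ and $N$ independent. Since $\mu(\cdot)$ depends only on the distribution function, it suffices to prove, for every $s>0$,
$$d_{|Kf|}(s)\le d_{|f|\,(N+1)}(s).$$

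The first step is the pointwise estimate
$$|Kf|\le \sum_{k\le N}|f_k|\le N\cdot\max_{k\le N}|f_k|,$$
with the convention that the right-hand side is $0$ on $\{N=0\}$. Conditioning on $\{N=n\}$ and using independence together with a union bound over the $n$ copies gives
$$P\bigl(N\max_{k\le N}|f_k|>s\bigr)=\sum_{n\ge1}P(N=n)\,P\Bigl(\max_{k\le n}|f_k|>\tfrac sn\Bigr)\le \sum_{n\ge1}n\,P(N=n)\,d_{|f|}\bigl(\tfrac sn\bigr).$$

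The key observation is the Poisson identity
$$n\,P(N=n)=\frac{e^{-1}}{(n-1)!}=P(N=n-1).$$
Substituting $m=n-1$ turns the last sum into
$$\sum_{m\ge0}P(N=m)\,d_{|f|}\Bigl(\frac{s}{m+1}\Bigr)=P\bigl(|f|(N+1)>s\bigr),$$
where the final equality uses the independence of $f$ and $N$. This is exactly the distribution function of $f\otimes(K(\chi_{(0,1)})+1)$ at $s$.

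Combining the pointwise estimate, the union bound and the identity gives $d_{|Kf|}\le d_{|f\otimes(K\chi_{(0,1)}+1)|}$, and passing to decreasing rearrangements yields the claim.

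The only step requiring care is the first one: fixing precisely which realization of $K$ the paper uses, so that $Kf\overset{d}{=}\sum_{k\le N}f_k$ and $K(\chi_{(0,1)})\overset{d}{=}N$ hold simultaneously. One also needs that the tensor product is to be read as a product of independent variables, rearranged onto $(0,1)$. Once these identifications are fixed, the argument is the short computation above.
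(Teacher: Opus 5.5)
Your proposal is correct and is essentially the paper's proof. The paper also splits $Kf$ into blocks consisting of a sum of $n$ copies of $f$ on a set of measure $\frac{1}{e\cdot n!}$, and bounds each block by $\mu(\sum_{k\le n}f_{n,k})\le n\sigma_n\mu(f)$, which is exactly your union bound $d_{\sum_{k\le n}|f_k|}(s)\le n\,d_{|f|}(s/n)$ in rearrangement form. It then reindexes using $n\cdot\frac{1}{e\cdot n!}=\frac{1}{e\cdot (n-1)!}$, your Poisson identity. The only difference is presentation: you use the compound Poisson representation and distribution functions, while the paper uses the disjoint-sum definition of $K$ and decreasing rearrangements.
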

\begin{proof} By the definition of the 
Kruglov operator\cite{AS05} (see also \cite[(33)]{AS-uspehi}),
we have 
$$Kf=\bigoplus_{n\geq1}\left(\left(\sum_{k=1}^nf_{n,k}\right)\otimes\chi_{(0,\frac1{e\cdot n!})}\right):=\sum_{n\geq1}\left(\left(\sum_{k=1}^nf_{n,k}\right) \chi_{_{E_n} }\right) ,$$
where  $\{f_{n,k}\}_{1\leq k\leq n}$ is a set of  
independent random variables which are 
 all equimeasurable with $f$ and $\{E_n\}_{n\ge 1}$ is a sequence of pairwise disjoint subsets of $(0,1)$  with $m(E_n)=\frac1{e\cdot n!}$, $n\ge 1$. 
By \cite[Corollary 2.3.16]{LSZ}, we have 
$$\mu\left(\sum_{k=1}^nf_{n,k}\right)\leq n \sigma_n\mu(f).$$
Thus, we have 
\begin{align*}
\mu(Kf)&=\mu\left(\bigoplus_{n\geq1}\left(\mu\left(\sum_{k=1}^nf_{n,k}\right)\otimes\chi_{(0,\frac1{e\cdot n!})}\right)\right)\leq \mu\left(\bigoplus_{n\geq1}\left(n\sigma_n\mu(f)\otimes\chi_{(0,\frac1{e\cdot n!})}\right)\right)\\
&=\mu\left(\bigoplus_{n\geq1}\left(n\mu(f)\otimes\chi_{(0,\frac1{e\cdot (n-1)!})}\right)\right)=\mu\left(f\otimes\left(\bigoplus_{n\geq1}n\chi_{(0,\frac1{e\cdot (n-1)!})}\right)\right)\\
&=\mu(f\otimes (K(\chi_{(0,1)})+1)).
\end{align*}
\end{proof}

\begin{lem}\label{boundedness of K} If $E$ is a symmetric quasi-Banach function space on $(0,1)$ such that ${\bm \alpha}_E>0,$ then the mapping $f\to f\otimes K(\chi_{(0,1)})$ is bounded on $E.$
\end{lem}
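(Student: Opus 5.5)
We will estimate $\mu(f\otimes K(\chi_{(0,1)}))$ directly, using the fact that $g:=K(\chi_{(0,1)})$ is a Poisson random variable with parameter $1$. By the formula in the proof of Lemma \ref{estimate for Kf}, $K(\chi_{(0,1)})=\sum_{n\ge1} n\chi_{E_n}$ with $\{E_n\}$ pairwise disjoint and $m(E_n)=\frac1{e\cdot n!}$. Hence, identifying $(0,1)\times(0,1)$ with $(0,1)$ via a measure-preserving map, we get
$$f\otimes g=\sum_{n\ge1} n\,(f\otimes\chi_{E_n}).$$
Each summand $f\otimes\chi_{E_n}$ is equimeasurable with $\sigma_{1/(e\cdot n!)}f$, since its distribution function is $m(E_n)\,d_{|f|}$. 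Thus $\|f\otimes\chi_{E_n}\|_E=\|\sigma_{1/(e\cdot n!)}\mu(f)\|_E$.

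\textbf{Step 1 (summability in the quasi-norm).} By the Aoki--Rolewicz theorem, $E$ admits an equivalent $r$-norm for some $r\in(0,1]$, i.e.\ $\|\sum h_n\|^r\le\sum\|h_n\|^r$. This is compatible with condition (ii) of the definition up to a constant. Applying it gives
$$\|f\otimes g\|_E^r\le C\sum_{n\ge1} n^r\,\|\sigma_{1/(e\cdot n!)}\|_{E\to E}^r\,\|f\|_E^r .$$
The infinite sum is justified because the partial sums increase in modulus. Alternatively, the partial sums converge in $E$, since the series converges absolutely in the $r$-norm and $E$ is complete, and the limit agrees a.e.\ with $f\otimes g$.

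\textbf{Step 2 (dilation decay from the Boyd index).} Fix $0<\beta<{\bm\alpha}_E$. By definition of ${\bm\alpha}_E$, there is $t_0\in(0,1)$ such that $\log\|\sigma_t\|_E/\log t>\beta$ for $t<t_0$. Since $\log t<0$, this gives $\|\sigma_t\|_E\le t^{\beta}$ for such $t$. For $t_0\le t\le1$ we have $\mu(\sigma_tf)\le\mu(f)$, so $\|\sigma_t\|_E\le1\le t_0^{-\beta}t^\beta$. Hence $\|\sigma_t\|_E\le C_\beta t^\beta$ for all $t\in(0,1]$.

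\textbf{Step 3 (conclusion).} Plugging Step 2 into Step 1 yields
$$\|f\otimes g\|_E^r\le C\,C_\beta^r\Big(\sum_{n\ge1} n^r (e\cdot n!)^{-\beta r}\Big)\|f\|_E^r .$$
The series converges because $n!$ grows superexponentially, which proves boundedness.

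The only delicate point is the passage to an infinite sum in a quasi-Banach space. This is handled by the $r$-norm in Step 1, and it is exactly where the factorial decay supplied by ${\bm\alpha}_E>0$ is needed.
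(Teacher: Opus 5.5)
Your argument is correct and is essentially the paper's: both write $K(\chi_{(0,1)})=\bigoplus_{n}n\chi_{(0,\frac1{e\cdot n!})}$, bound each piece by $n\|\sigma_{\frac1{e\cdot n!}}\|_{E\to E}\|f\|_E$, get power decay of $\|\sigma_t\|_{E\to E}$ from ${\bm \alpha}_E>0$, and sum. The one difference is in how the infinite sum is controlled. The paper uses the iterated quasi-triangle inequality \eqref{qt infinitely many summands}, where the factorial decay of $m(E_n)$ is really needed to beat $C_E^{n+1}$. Your Aoki--Rolewicz $r$-norm only needs $\sum_n n^r m(E_n)^{\beta r}<\infty$, so your closing remark that factorial decay is essential applies to the paper's argument rather than yours. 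Your constant $C_\beta$ also handles $t$ near $1$ more carefully than the paper's uniform exponent $c_E{\bm \alpha}_E$ for all $n\geq0$.

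Drop the justification ``the partial sums increase in modulus'': it would need the Fatou property, which is not assumed here. Keep your second justification (Cauchy in the $r$-norm, then completeness), and add that convergence in $E$ implies convergence in measure because $\mu(t;h)\|\chi_{(0,t)}\|_E\le\|h\|_E$; this is what identifies the limit with $f\otimes g$.
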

\begin{proof} By the quasi-triangle inequality of $E$ (with a constant $C_E$), we have 
\begin{eqnarray*}
\left\|f\otimes K(\chi_{(0,1)})\right\|_E
&\stackrel{\mbox{\tiny \cite[(33)]{AS-uspehi}}}{=}&\left\|f\otimes\left(\bigoplus_{n\geq0}n\chi_{(0,\frac1{e\cdot n!})}\right)\right\|_E\\
&\leq&\sum_{n\geq0}nC_E^{n+1}\left\|f\otimes\chi_{(0,\frac1{e\cdot n!})}\right\|_E=
\sum_{n\geq0}nC_E^{n+1}
\left\|\sigma_{\frac1{e\cdot n!}}f\right\|_E.
\end{eqnarray*}
 Since ${\bm \alpha}_E>0,$ it follows from the definition of ${\bm \alpha}_E$ that there exists a constant $c_E\in (0,1) $ depending on $E$ only such that   $$   \frac{\log \norm{\sigma_{\frac1{e\cdot n!}} }_E}{\log  {\frac1{e\cdot n!}}}  \ge  c_E  {\bm \alpha}_E,~n\ge 0.  $$
Since $\frac{1}{e\cdot n!}<1$, it follows that for all $f\in E$, we have 
$$\left\|\sigma_{\frac1{e\cdot n!}} f\right\|_E\leq \left(\frac1{e\cdot n!}\right) ^{ c_E {\bm \alpha}_E}\left\|f\right\|_E,~n\ge 0.$$
Thus, we have 
$$\left\|f\otimes K(\chi_{(0,1)})\right\|_E\leq \sum_{n\geq0}nC_E^{n+1}(e\cdot n!)^{-c_E {\bm \alpha}_E}\cdot \left\|f\right\|_E.$$ 
\end{proof}

\begin{rem}
It follows immediately from the definition of Boyd indices that
both the lower and the upper Boyd indices of $L_{p,q}(0,1)$
equal $p$ (see, e.g., \cite[p.218]{BS}). This together with Lemmas \ref{estimate for Kf} and \ref{boundedness of K} implies that $L_{p,q}(0,1)$ has the Kruglov property. 
\end{rem}

Below, $C_X$ denotes the constant in the quasi-triangle inequality for a quasi-Banach space $X.$ The following quasi-triangle inequalities are valid in every quasi-Banach space $X$:
\begin{equation}\label{qt many summands}
\left\|\sum_{n=1}^Nx_n\right\|_X \le \left\|\sum_{n=1}^{2^{k+1} }x_n\right\|_X \le C_X^{ k+1} \sum_{n=1}^N\left\|x_n\right\|_X
\leq (2N)^{\log_2(C_X)}\sum_{n=1}^N\left\|x_n\right\|_X,   
\end{equation}
where $2^k <  N\le 2^{k+1},~ x_n=0,~n\ge N+1  $ (in particular, we have $ \log_2 (C_X^{k+1})   \le \log_2 (C_X^{\log_2(2N)}) = \log_2(2N)\log_2 (C_X )=\log_2\left( (2N)^
{\log_2 (C_X )} \right) $),
and 
\begin{equation}\label{qt infinitely many summands}
\left\|\sum_{n=1}^{\infty}x_n \right\|_X\leq \sum_{n=1}^{\infty}C_X^n\left\|x_n\right\|_X.
\end{equation}

Let us now recall the upper Boyd index, in the setting of sequence spaces. 
For any $n\in \mathbb{N}$, we define the dilation operator 
$$\sigma_n:\ell_\infty \to \ell_\infty,\quad  \{u(k)\}_{k\geq0} \mapsto \left\{
u(\lfloor \frac{k}{n}\rfloor)\right\}_{k\geq0}.$$
The upper Boyd index associated to a sequence space $J$ is defined by 
$${\bm \beta}_J = \lim_{k\to \infty }\frac{\log \norm{\sigma_k}_{J\to J}}{\log k},$$
where the limit exists by the Fekete lemma. By \eqref{qt many summands}, we have
$$\left\|\sigma_nx\right\|_J\leq (2n)^{\log_2(2C_J)+1}\left\|x\right\|_J,\quad x\in J,\quad n\in\mathbb{N}.$$
 Hence, ${\bm \beta}_J
=\lim_{k\to\infty } \frac{\log \norm{\sigma_k}_{J\to J}}{ \log k}\le 
\lim_{k\to\infty } \frac{\log ( (2k)^{\log_2 (2C_J) +1 })}{ \log k}=\log_2 (2C_J) +1  
<\infty.$

\section{ Non-embedding results: applications of Holub's theorem (1973)}\label{sec:holub}

The theorem below is the main result of this section. Methods used in its proof are certainly known to experts and are scattered across the literature in the setting of Banach spaces, see e.g. \cite{AHS,Maligranda,Dilworth}. However, due to the lack of references for the quasi-Banach setting, we present full proof below. 

\begin{thm}\label{easy thm} Let $p_1,q_1,p_2,q_2>0.$ If $L_{p_1,q_1}(0,1)\hookrightarrow\mathcal{C}_{p_2,q_2},$ then $p_1\geq 2$ and either $p_1=q_1=2$ or $q_1=q_2.$ 
\end{thm}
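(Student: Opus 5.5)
The plan is to derive all three constraints from one structural fact: which spaces $\ell_r$ can sit inside $\mathcal{C}_{p_2,q_2}$. Let $T:L_{p_1,q_1}(0,1)\to\mathcal{C}_{p_2,q_2}$ be the embedding.

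\emph{Step 1: a dichotomy in $\mathcal{C}_{p_2,q_2}$.} I would first prove the following. Let $\{x_n\}\subset \mathcal{C}_{p_2,q_2}$ be semi-normalised and equivalent to the unit vector basis of $\ell_r$ for some $0<r<\infty$. Then $r\in\{2,q_2\}$. If moreover $r=q_2\neq 2$, then some block sequence of $\{x_n\}$ tends to $0$ in the uniform norm.

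For the proof, I would pass to a subsequence converging in the weak operator topology, by a diagonal argument on matrix entries. The Fatou property (proved just before the definition of $\mathcal{C}_{p,q}$) keeps the limit in $\mathcal{C}_{p_2,q_2}$. I would then replace $x_n$ by normalised differences $x_{2n}-x_{2n+1}$, or more generally by normalised blocks; these still span $\ell_r$ and are weak-operator null. Cutting by finite-rank projections $p_N$, $\mathbf 1-p_N$ and using the quasi-triangle inequalities \eqref{qt many summands}--\eqref{qt infinitely many summands}, I would write each block, up to a small perturbation, as $a_n+b_n+c_n$. Here $a_n$ lives in the corner $(\mathbf 1-p_N)\,\cdot\,(\mathbf 1-p_N)$, and $b_n,c_n$ live in the off-diagonal corners $p_N\,\cdot\,(\mathbf 1-p_N)$ and $(\mathbf 1-p_N)\,\cdot\,p_N$. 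The off-diagonal parts form sequences that are orthogonal from the left (respectively, from the right). Their moduli are essentially columns (respectively, rows) of a Hilbert–Schmidt-like structure of finite rank $N$, so they span $\ell_2$ up to constants depending on $N$. The diagonal parts are asymptotically disjoint. If they do not tend to $0$ uniformly they cannot span $\ell_r$ for $r<\infty$, by disjointness and the fact that the singular values of the sum dominate. If they do tend to $0$ uniformly, Lemma~\ref{CD lemma} shows a subsequence spans $\ell_{q_2}$. Since $\ell_r$ is not a mixture of $\ell_2$ and $\ell_{q_2}$ unless $r\in\{2,q_2\}$ (by a Pitt-type / total incomparability argument), this gives Step 1.

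\emph{Step 2: copies of $\ell_r$ in $L_{p_1,q_1}(0,1)$.} Normalised disjointly supported functions with supports shrinking to a point contain a subsequence equivalent to the $\ell_{q_1}$ basis; this is the function-space analogue of Lemma~\ref{original CD lemma}. Step 1 then forces $q_1\in\{2,q_2\}$. For $p_1<2$, I would invoke Holub's theorem, which I expect to be the result alluded to in the section title. It should give that i.i.d.\ symmetric $r$-stable variables span $\ell_r$ in $L_{p_1,q_1}(0,1)$ for every $r\in(p_1,2)$. That yields a continuum of values of $r$, contradicting Step 1. Hence $p_1\ge 2$.

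\emph{Step 3: the case $q_1=2\neq q_2$.} It remains to show $p_1=2$. Suppose $p_1>2$. By Theorem~\ref{AS thm}, $L_{p_1,q_1}(0,1)$ contains $Z^2_{L_{p_1,q_1}}(0,\infty)$. In that space the functions $f_n=\chi_{(n,n+1)}$ span $\ell_2$, while suitable disjoint functions with heights growing and supports of size $\sim$ const span something of $\ell_{p_1,q_1}$-type, strictly between $\ell_2$ and $\ell_{q_1}$. Applying Step 1 to the sequence of the latter type, and refining the dichotomy to sequences equivalent to $\ell_{p_1,2}$ in place of $\ell_r$, should give a contradiction.

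The main obstacle is making Step 1 rigorous in the quasi-Banach setting, without duality or weak compactness. The diagonal/off-diagonal splitting has to be carried out with the constants $C_X$, and the off-diagonal pieces genuinely have to be identified with $\ell_2$ uniformly. Step 3 is the point where I am least sure the described sequence works, and I would check it first.
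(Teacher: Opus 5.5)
Your Steps 1 and 2 reach the same intermediate facts as the paper (Lemmas~\ref{first easy lemma}, \ref{second easy lemma}, \ref{third easy lemma}). Step~3, however, has a genuine gap, and it is the only place where the case $q_1=2\neq q_2$, $p_1>2$ gets excluded. First, with $q_1=2$ there is nothing ``strictly between $\ell_2$ and $\ell_{q_1}$''. Second, normalised disjoint functions $h_n\chi_{E_n}$ in $Z^2_{L_{p_1,2}}(0,\infty)$ with $m(E_n)\geq 1$ just span $\ell_2$, because on $(0,1)$ the rearrangement only sees the largest coefficient. Third, $\ell_{p_1,2}\not\hookrightarrow L_{p_1,2}(0,1)$ for $p_1\neq 2$ \cite{KurS,SS}. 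So there is no $\ell_{p_1,2}$-sequence in $Z^2_{L_{p_1,2}}(0,\infty)\hookrightarrow L_{p_1,2}(0,1)$ to feed into a refined Step~1. More fundamentally, testing $T$ on $\ell_r$-type sequences cannot succeed here. By Lemma~\ref{fifth easy lemma}, for $p_1\geq 2$ the only $\ell_r$ inside $L_{p_1,2}(0,1)$ is $\ell_2$, and $\ell_2$ does embed into $\mathcal{C}_{p_2,q_2}$.

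The missing idea is to apply the uniform-norm dichotomy to the whole embedding $T$, using Holub's theorem in its actual form. That is Lemma~\ref{holub lemma}: every closed subspace of $\mathcal{K}(H)$ is Hilbertian or contains $c_0$. The stable-variable fact you attributed to Holub is a separate ingredient (Fact~\ref{rstable fact}). The dichotomy runs as follows.

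\emph{First alternative.} Suppose $\|Tf\|_\infty\gtrsim\|f\|_{L_{p_1,q_1}}$ on all of $L_{p_1,q_1}(0,1)$. Then $T(L_{p_1,q_1}(0,1))$ is a closed subspace of $\mathcal{K}(H)$ containing no $c_0$ (Lemma~\ref{fifth easy lemma}), hence Hilbertian. Fact~\ref{factLpq} then gives $p_1=q_1=2$.

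\emph{Second alternative.} Otherwise some normalised $\{f_k\}$ satisfies $\|Tf_k\|_\infty\to 0$. Lemma~\ref{CD lemma} then yields $\ell_{q_2}\hookrightarrow L_{p_1,q_1}(0,1)$. Lemma~\ref{fifth easy lemma}, valid since $p_1\geq 2$, forces $q_2\in\{q_1,2\}$, which is impossible when $q_1=2\neq q_2$.

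This is the paper's Lemma~\ref{fourth easy lemma}. The same dichotomy applied to $T$ restricted to $\ell_r$ proves your Step~1 in a few lines and makes your decomposition argument unnecessary. Note also that one of that argument's sub-claims is false as stated: in $\mathcal{C}_{q,q}$ the diagonal units $E_{n,n}$ are not uniformly null, yet they span $\ell_q$.
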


The following result was claimed   in \cite{Holub}. %The argument in \cite{Holub} is written in a somewhat telegraphic style. 
We refer the reader to Theorem 1 in \cite{Friedman} for a full proof, see also \cite{AL,Arazy81}.

\begin{lem}\label{holub lemma} Every closed subspace in $\mathcal{K}(H)$ is either isomorphic to a Hilbert space or contains a subspace isomorphic to $c_0.$
\end{lem}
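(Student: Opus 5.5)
The plan is a sequential analysis of weakly null sequences in $\mathcal{K}(H)$, followed by a passage from asymptotic $\ell_2$-structure to a global Hilbertian structure. Let $X\subset\mathcal{K}(H)$ be closed and suppose $X$ contains no copy of $c_0$. First reduce to separable $X$: if every separable subspace is Hilbertian with a uniform constant, then so is $X$; the uniformity will come out of the argument below.

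Since $\mathcal{K}(H)^*=\mathcal{C}_1$ is separable, $\mathcal{K}(H)$ contains no $\ell_1$. Rosenthal's $\ell_1$-theorem then gives every bounded sequence in $X$ a weakly Cauchy subsequence. If such a subsequence were not weakly convergent, the standard argument (differences of a non-trivial weakly Cauchy sequence form a seminormalized sequence dominating the summing basis) together with the absence of $c_0$ and the Bessaga--Pełczyński/Rosenthal $c_0$-theorem would give a contradiction. So $X$ is reflexive. We therefore only need to control weakly null sequences, and in fact weakly null trees.

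For a normalized weakly null sequence $(x_n)$ in $\mathcal{K}(H)$, apply a gliding hump argument. Fix a sequence of finite-rank projections $P_k\uparrow{\bf 1}$, pass to a subsequence, and make a small perturbation. This yields blocks $Q_n=P_{k_n}-P_{k_{n-1}}$ and a decomposition
$$x_n\approx Q_nx_nQ_n+Q_nx_nP_{k_{n-1}}+P_{k_{n-1}}x_nQ_n=:a_n+b_n+c_n.$$
The diagonal parts $a_n$ are orthogonal from both sides, so $\|\sum\lambda_na_n\|_\infty=\sup_n|\lambda_n|\|a_n\|$: a $c_0$-estimate. The parts $b_n$ are orthogonal from the left, so $(\sum\lambda_nb_n)^*(\sum\lambda_nb_n)=\sum|\lambda_n|^2b_n^*b_n$, which gives the upper estimate $\|\sum\lambda_nb_n\|\le(\sum|\lambda_n|^2)^{1/2}\sup\|b_n\|$. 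The $c_n$ are symmetric, with orthogonality from the right. Since $X$ has no $c_0$, the $a_n$ must tend to zero along a subsequence, since otherwise one extracts a $c_0$-sequence in $X$. The same kind of argument, comparing $\|\sum_{n\le N}x_n\|$ with the column and row square functions, shows that $\|b_n\|$ or $\|c_n\|$ stays bounded below. This should yield that every normalized weakly null sequence in $X$ has a subsequence satisfying upper $\ell_2$-estimates and also lower $\ell_2$-estimates. For the lower bound I would use $\|y\|_\infty\ge\|y^*y\|_\infty^{1/2}$ and the near-orthogonality of the dominant part. The constants must be uniform, and the same estimates must hold along branches of weakly null trees, which is routine once the one-sequence version is done with care.

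The main obstacle is the final step, passing from these asymptotic $\ell_2$-estimates to $X\approx\ell_2$. Here I would invoke the Knaust--Odell--Schlumprecht / Odell--Schlumprecht theorem: a reflexive separable space in which every normalized weakly null tree has a branch with uniform upper and lower $\ell_2$-estimates embeds into an $\ell_2$-sum of finite-dimensional spaces. The finite-dimensional pieces inherit the same asymptotic bounds. If that route is too blunt, I would instead use Kwapień's theorem and show type $2$ and cotype $2$ directly. Averages $\mathbb{E}\|\sum\varepsilon_iy_i\|$ over arbitrary finite families are reduced to the weakly null case by a perturbation/blocking argument in $\mathcal{K}(H)$, and this reduction is exactly where I expect the real difficulty to lie.
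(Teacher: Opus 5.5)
Your proposal does not reach the conclusion, and the step that fails is the one you flag yourself. The difficulty is structural: asymptotic information cannot make a space Hilbertian. The Odell--Schlumprecht theorem only embeds $X$ into $(\sum_n E_n)_{\ell_2}$ with \emph{arbitrary} finite-dimensional $E_n$. Branch estimates give the $E_n$ no Hilbertian structure. For example, $(\sum_n\ell_\infty^n)_{\ell_2}$ is reflexive, and every normalized weakly null tree in it has a branch $(1+\epsilon)$-equivalent to the $\ell_2$-basis. Yet it contains every $\ell_\infty^n$ and so has no finite cotype. The local structure must therefore come from $\mathcal{K}(H)$ itself, and your Kwapie\'n alternative (type and cotype $2$ for arbitrary finite families) is exactly what you leave open.

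Several intermediate claims are also false as stated:
\begin{enumerate}
\item Absence of $\ell_1$ and $c_0$ does not give reflexivity; James' space is a counterexample. In a $c_0$-free space Rosenthal's $c_0$-theorem only yields strongly summing subsequences, which is no contradiction.
\item $\|a_n\|_\infty\not\to0$ does not force $c_0$. Take $x_n=E_{n,n}+E_{n,0}$, $n\geq1$. Then $\|a_n\|_\infty=1$, while $\|(\sum_n\lambda_nx_n)e_0\|=\|\lambda\|_{\ell_2}$ gives $\|\sum_n\lambda_nx_n\|_\infty\approx\|\lambda\|_{\ell_2}$.
\item A left-orthogonal part cannot supply lower $\ell_2$-estimates. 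Indeed $\|\sum_n\lambda_nb_n\|_\infty^2=\|\sum_n|\lambda_n|^2b_n^*b_n\|_\infty=\sup_n|\lambda_n|^2$ when the $b_n$ are matrix units $E_{i_n,j_n}$ with distinct $i_n$ and distinct $j_n$.
\end{enumerate}

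The missing idea is Holub's argument (proved as Theorem 1 in Friedman, which the paper cites): make the dichotomy on a \emph{fixed} finite corner, not asymptotically. Let $P_n=\sum_{k<n}E_{k,k}$ and $J_n(x)=(xP_n,P_nx)\in\mathcal{K}(H)P_n\oplus P_n\mathcal{K}(H)$. The target ($n$ columns and $n$ rows) is $\sqrt n$-isomorphic to a Hilbert space, since $\|xP_n\|_\infty\le(\sum_{k<n}\|xe_k\|^2)^{1/2}\le\sqrt n\,\|xP_n\|_\infty$.

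If some $J_n$ is bounded below on $X$, then $X$ is isomorphic to a subspace of a Hilbert space. This is a global conclusion and needs no reflexivity, trees, or type/cotype.

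Otherwise, pick unit vectors $x_n\in X$ with $\|x_nP_n\|_\infty+\|P_nx_n\|_\infty<2^{-n}$. Since $x_nP_m=x_nP_nP_m$ for $m\le n$, the $x_n$ vanish asymptotically on every fixed corner. Compactness of each $x_n$ gives $\|(1-P_k)x_n\|_\infty,\|x_n(1-P_k)\|_\infty\to0$ as $k\to\infty$. A gliding hump then produces a subsequence that is a small perturbation of $\{Q_jx_{n_j}Q_j\}_j$ with pairwise orthogonal $Q_j$, i.e.\ of an isometric $c_0$-basis.

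The same fixed-corner trick is what would have given you the lower $\ell_2$-estimate for a single sequence. Only its global version, however, proves the lemma.
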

%\begin{proof} Let $X$ be a Banach subspace in $\mathcal{K}(H).$ Without loss of generality, $H=l_2.$
%
%For every $n\geq0,$ define the map $\iota_n:X\to \ell_2^{\oplus (2n+2)}$ by setting $$\iota_n(x)=\bigoplus_{0\leq k\leq n}(xe_k\oplus x^{\ast}e_k).$$
%
%{\bf Case 1:} Suppose that none of the maps $\iota_n:X\to \ell_2^{\oplus (2n+2)}$ is an isomorphic embedding. Hence, for every $n\geq0,$ there exists a normalised $x_n\in X$ such that $\|\iota_n(x_n)\|_{\ell_2^{\oplus (2n+2)}}\leq 2^{-n}.$
%
%If $\xi\in\ell_2,$ then
%$$\|x_n\xi\|_{\ell_2}=\|\sum_{k=0}^n\xi(k)x_n(e_k)+x_n(\sum_{k<n}\xi(k)e_k)\|_{\ell_2}\leq$$
%$$\leq\sum_{k=0}^n|\xi(k)|\|x_n(e_k)\|_{\ell_2}+\|x_n(\sum_{k<n}\xi(k)e_k)\|_{\ell_2}\leq$$
%$$\leq\sum_{k=0}^n|\xi(k)|\|x_n(e_k)\|_{\ell_2}+\|\sum_{k<n}\xi(k)e_k\|_{\ell_2}\leq$$
%$$\leq \|\xi\|_{\ell_2}\|\iota_n(x_n)\|_{\ell_2^{\oplus (2n+2)}}+\|\sum_{k<n}\xi(k)e_k\|_{\ell_2}.$$
%The right hand side tends to $0$ as $n\to\infty.$ Since $\xi\in\ell_2$ is arbitrary, it follows that $x_n\to0$ in strong operator topology as $n\to\infty.$ Similarly, $x_n^{\ast}\to0$ in strong operator topology as $n\to\infty.$ Since the sequence $\{x_n\}_{n\geq0}$ is normalised, it follows that there is a subsequence of the sequence $\{x_n\}_{n\geq0}$ which is equivalent to the standard basis in $c_0.$
%
%{\bf Case 2:} Suppose that there exists $n\in\mathbb{Z}_+$ such that the map $\iota_n:X\to \ell_2^{\oplus (2n+2)}$ is an isomorphic embedding. In this case, $X$ is isomorphic to a subspace of a Hilbert space and is, therefore, isomorphic to a Hilbert space.
%\end{proof}

The following lemma is certainly known to experts, see e.g. \cite{AHS,Arazy81} for some special cases. 
However, due to the lack of suitable references, we present a short proof below.
\begin{lem}\label{first easy lemma}
Let $0<p,q,r<\infty.$ If $\ell_r\hookrightarrow \mathcal{C}_{p,q},$ then either $r=2$ or $r=q.$
\end{lem}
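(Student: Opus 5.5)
The plan is to split according to whether $T$ remains an isomorphic embedding when $\mathcal{C}_{p,q}$ is replaced by the compact operators with the uniform norm. Let $T:\ell_r\to\mathcal{C}_{p,q}$ be an isomorphic embedding, and write $\{e_k\}_{k\ge1}$ for the unit vector basis of $\ell_r$. Since $\mathcal{C}_{p,q}\subset\mathcal{K}(H)$ with $\|x\|_\infty\le \|x\|_{\mathcal{C}_{p,q}}$ (recall $\|p\|_{\mathcal{C}_{p,q}}=1$ for rank-one projections $p$), the composition $J:=\iota\circ T:\ell_r\to \mathcal{K}(H)$ is bounded. For $m\ge1$ let $\ell_r^{(m)}:=\overline{\mathrm{span}}\{e_k\}_{k\ge m}$, which is isometric to $\ell_r$. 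The dichotomy is:
\begin{enumerate}[(a)]
\item for some $m$, the restriction $J|_{\ell_r^{(m)}}$ is an isomorphic embedding into $(\mathcal{K}(H),\|\cdot\|_\infty)$;
\item for every $m$ it is not.
\end{enumerate}

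\emph{Case (a).} Here $\ell_r\approx J(\ell_r^{(m)})$, which is a closed subspace of the Banach space $\mathcal{K}(H)$. First, this forces $r\ge1$: for $r<1$ the space $\ell_r$ is not locally convex, so it cannot be isomorphic to a normed space. So $1\le r<\infty$, and we invoke Lemma~\ref{holub lemma}: $J(\ell_r^{(m)})$ is either isomorphic to a Hilbert space or contains a copy of $c_0$. The second alternative is impossible, since $c_0\not\hookrightarrow\ell_r$ for $r<\infty$; for instance, every infinite-dimensional subspace of $\ell_r$ contains a copy of $\ell_r$, and $\ell_r\not\hookrightarrow c_0$. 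Hence $\ell_r$ is isomorphic to a Hilbert space, and so $r=2$.

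\emph{Case (b).} We run a standard gliding-hump construction. Choose inductively normalized finitely supported vectors $y_n\in\ell_r$ with pairwise disjoint supports, each supported to the right of the support of $y_{n-1}$, such that $\|Jy_n\|_\infty\le 2^{-n}$.
\begin{itemize}
\item To get one step: at stage $n$ take $m$ beyond the previous support. Since $J|_{\ell_r^{(m)}}$ is not an embedding, there is a norm-one $z\in\ell_r^{(m)}$ with $\|Jz\|_\infty$ as small as we like.
\item We then truncate $z$ to a finite support. Because $J$ is continuous and the quasi-norm of $\ell_r$ is continuous, the truncation changes both $\|z\|_{\ell_r}$ and $\|Jz\|_\infty$ only slightly; renormalizing gives $y_n$.
\end{itemize}
Disjointly supported normalized vectors in $\ell_r$ are isometrically equivalent to the unit vector basis of $\ell_r$. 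So the sequence $x_n:=Ty_n$ satisfies
$$\left\|\sum_n{\bm \alpha}(n)x_n\right\|_{\mathcal{C}_{p,q}}\asymp\|{\bm \alpha}\|_{\ell_r},$$
and in particular $\inf_n\|x_n\|_{\mathcal{C}_{p,q}}\ge\|T^{-1}\|^{-1}>0$ and $\sup_n\|x_n\|_{\mathcal{C}_{p,q}}\le\|T\|$. Moreover $x_n\to0$ in the uniform norm. Lemma~\ref{CD lemma} then provides a subsequence $\{x_{n_k}\}$ whose closed span is equivalent to $\ell_q$. On the other hand, the same subsequence is equivalent to the $\ell_r$-basis. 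Comparing the two equivalences on ${\bm \alpha}=\sum_{k\le N}e_k$ gives $N^{1/r}\asymp N^{1/q}$ for all $N$, and hence $r=q$.

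The main obstacle is the case split itself: making sure that Case (b) really yields disjoint blocks, and not merely a weakly-null-type sequence, with small uniform norm. This is exactly what the "for every $m$" formulation secures. A secondary point is handling $r<1$ in Case (a), where Lemma~\ref{holub lemma} only applies to Banach subspaces. This is resolved by the local-convexity observation above, which sends every $r<1$ into Case (b) and therefore gives $r=q$ there.
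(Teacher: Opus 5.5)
Your proof is correct. It follows the paper's overall scheme: split according to whether $T$ remains an isomorphic embedding into $(\mathcal{K}(H),\|\cdot\|_\infty)$, then use Lemma~\ref{holub lemma} in the first case and Lemma~\ref{CD lemma} in the second. Your treatment of the second case, however, differs in a meaningful way.

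\textbf{The paper's second case.} The paper uses the plain dichotomy: either $T:\ell_r\to\mathcal{K}(H)$ is an isomorphic embedding, or there is \emph{some} normalised sequence $\{{\bm \alpha}_k\}\subset\ell_r$ with $T({\bm \alpha}_k)\to0$ uniformly. That sequence need not be disjointly supported. So after Lemma~\ref{CD lemma} the paper can only conclude that $\{{\bm \alpha}_k\}$ spans a copy of $\ell_q$ inside $\ell_r$. It then appeals to the incomparability of $\ell_q$ and $\ell_r$ for $q\neq r$, including the quasi-Banach range, via \cite[Proposition 2.9]{KPR} and \cite[Corollary 2.1.6]{AK}.

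\textbf{Your second case.} You run the dichotomy on the tails $\ell_r^{(m)}$ and use a gliding hump to make the $y_n$ disjointly supported. This makes the $\ell_r$-side of the equivalence exact and isometric, so $r=q$ follows from $N^{1/r}\asymp N^{1/q}$ on ${\bm \alpha}=\sum_{k\le N}e_k$. In effect you have inlined the block-basis argument that underlies the cited incomparability result. The cost is the slightly more elaborate tail formulation and the truncation step; the paper's version is shorter but leans on the external reference.

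\textbf{The first case.} Here the two arguments coincide. The only difference is that you exclude $r<1$ directly by the non-normability of $\ell_r$, whereas the paper covers this through its citation of \cite{KPR}.
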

\begin{proof} Let $T:\ell_r\to \mathcal{C}_{p,q}$ be an isomorphic embedding. We  have 
\begin{enumerate}
\item[(a)] either $T:\ell_r\to\mathcal{K}(H)$ is an isomorphic embedding; 
\item[(b)] or  there exists a normalised sequence $\{{\bm \alpha}_k\}_{k\geq1}\subset \ell_r$ such that $T({\bm \alpha}_k)\to0$ in $\mathcal{K}(H)$ as $k\to \infty$. 
\end{enumerate}

In case (a), $T(\ell_r)$ is a closed  subspace in $\mathcal{K}(H).$ By Lemma \ref{holub lemma}, we either have $T(\ell_r)\approx\ell_2$ or $c_0\hookrightarrow T(\ell_r).$ Since $T:\ell_r\to\mathcal{K}(H)$ is an isomorphic embedding, it follows that either $\ell_r\approx\ell_2$ or $c_0\hookrightarrow \ell_r.$ By \cite[Proposition  2.9]{KPR} and \cite[Corollary 2.1.6]{AK}, the first option is only possible if $r=2$ and the second option is impossible. Hence, $r=2$ in case (a).

In case (b), 
by Lemma \ref{CD lemma},
 passing to a subsequence if necessary, we conclude that the sequence $\{T({\bm \alpha}_k)\}_{k\geq1}$  (in $\mathcal{C}_{p,q}$) is equivalent to the standard basis in $\ell_q.$ 
Hence, the sequence $\{{\bm \alpha}_k\}_{k\geq1}$ (in $\ell_r$) is equivalent to the standard basis of $\ell_q.$ Hence, $\ell_q\hookrightarrow \ell_r.$  By \cite[Proposition  2.9]{KPR} and \cite[Corollary 2.1.6]{AK}, this is only possible if $r=q.$ Hence, $r=q$ in case (b).
\end{proof}

\begin{lem}\label{type-cotype lemma} 
Let $0<r<\infty$ and $1\leq s\leq 2.$ If $\ell_r\hookrightarrow L_s(0,1),$ then
we have $s\leq r\leq 2.$ Moreover, $c_0\not\hookrightarrow L_s(0,1).$
\end{lem}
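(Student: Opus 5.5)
We will deduce both assertions from type and cotype theory; no new technique is needed. Throughout, $1\le s\le 2$, so $L_s(0,1)$ is a Banach space. It has (Rademacher) type $s$ and cotype $2$; see \cite[Theorem 6.2.14]{AK}. Both properties pass to isomorphic copies of subspaces. Hence, if $\ell_r\hookrightarrow L_s(0,1)$, then $\ell_r$ is itself a Banach space with type $s$ and cotype $2$.

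First we dispose of the case $r<1$. The space $\ell_r$ with $0<r<1$ is not locally convex, so it admits no equivalent norm and cannot be isomorphic to a subspace of a Banach space. Concretely, for the unit vectors one has $\|\sum_{k=1}^n e_k\|_{\ell_r}=n^{1/r}$. An isomorphic embedding $T$ would give $n^{1/r}\lesssim \|\sum_{k=1}^n Te_k\|_{L_s}\le \sum_{k=1}^n \|Te_k\|_{L_s}\lesssim n$, which is impossible for $1/r>1$ once $n$ is large. So we may assume $r\ge 1$.

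For $r\ge1$ we test type and cotype on the unit vector basis. The key observation is that, for every choice of signs, $\|\sum_{k=1}^n \pm e_k\|_{\ell_r}=n^{1/r}$, while $\left(\sum_{k=1}^n\|e_k\|^2\right)^{1/2}=n^{1/2}$ and $\left(\sum_{k=1}^n\|e_k\|^s\right)^{1/s}=n^{1/s}$.
\begin{itemize}
\item Cotype $2$ gives $n^{1/2}\lesssim n^{1/r}$ for all $n$, hence $r\le 2$.
\item Type $s$ gives $n^{1/r}\lesssim n^{1/s}$ for all $n$, hence $r\ge s$.
\end{itemize}
Together these yield $s\le r\le 2$.

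For the last claim, $c_0$ has neither nontrivial type nor finite cotype. Indeed, $\|\sum_{k=1}^n\pm e_k\|_{c_0}=1$ while $\left(\sum_{k=1}^n\|e_k\|^2\right)^{1/2}=n^{1/2}$, so $c_0$ fails cotype $2$. Since $L_s(0,1)$ has cotype $2$, it follows that $c_0\not\hookrightarrow L_s(0,1)$.

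There is no serious obstacle. The only point needing care is to cite correctly that $L_s$ has type $s$ and cotype $2$ for $1\le s\le2$; in particular, $L_1$ has only the trivial type $1$, which is still consistent with the argument. We also use that type and cotype are isomorphic invariants inherited by subspaces.
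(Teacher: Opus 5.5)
Your proof is correct. For the $\ell_r$ part it is the paper's argument with the citations unpacked. The paper rules out $r<1$ by renorming $\ell_r$ via ${\bm \alpha}\mapsto\|T({\bm \alpha})\|_s$ and invoking \cite[Lemma 2.7]{KPR}; your estimate $n^{1/r}\lesssim n$ is exactly this local-convexity obstruction, written out by hand. The paper then simply cites \cite[Theorem 6.4.19]{AK}, whose necessity half is precisely your type-$s$/cotype-$2$ test on the unit vector basis of $\ell_r$.

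The only real divergence is the statement about $c_0$. The paper treats it as well known and cites the Banach-lattice literature \cite[Theorem 2.5.6]{MN}, rather than any type/cotype estimate. You instead deduce it from the fact that $c_0$ fails every finite cotype, whereas $L_s(0,1)$ has cotype $2$.

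Your route derives both conclusions from the single fact that $L_s(0,1)$ has type $s$ and cotype $2$, so it is self-contained. The $c_0$ step also works verbatim for every $1\le s<\infty$, since $L_s$ then has cotype $\max(s,2)$. The paper's lattice-theoretic citation instead rests on the order structure of $L_s$ and needs no cotype information.
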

\begin{proof} The fact $c_0\not\hookrightarrow L_s(0,1)$ is well-known, see e.g. \cite[Theorem 2.5.6]{MN} (see also \cite[Theorem 5.9.6]{DPS} and \cite[Theorem 6.5]{DPS2016}). We only need to prove the assertion concerning $\ell_r.$ 
 
Let $T:\ell_r\to L_s(0,1)$ is an isomorphic embedding. One can equip $\ell_r$ with the norm ${\bm \alpha}\to\left\|T({\bm \alpha})\right\|_s.$ Since $T$ is an isomorphic embedding, it follows that the natural quasi-norm on $\ell_r$ is equivalent to this norm. Using Lemma 2.7 in \cite{KPR}, we conclude that $r\geq 1.$ 
Now, the assertion follows from  \cite[Theorem 6.4.19]{AK}.
\end{proof}

\begin{fact}\label{rstable fact} Let $0<r<2.$ It is a standard fact (see e.g. Theorem 5 on p.181 in \cite{GnedenkoKolmogorov}) that $r$-stable random variables belong  to $L_{r,\infty}(0,1)$ (the weak $L_r$-space\cite{BS}). Let $\{f_k\}_{k\geq1}$ be a sequence of independent $r$-stable random variables. Since
$$\mu\left(\sum_{k\geq1}{\bm \alpha}(k)f_k\right)=\left\|{\bm \alpha}\right\|_r\mu(f_1)$$
(see e.g. \cite[p. 94]{Wojtaszczyk}),
it follows that the linear span of the sequence $\{f_k\}_{k\geq1}$ in $E(0,1)$ is an isometric copy of $\ell_r$ provided that $L_{r,\infty}(0,1)\subset E(0,1)$, see also \cite{GM,Wojtaszczyk}.
\end{fact}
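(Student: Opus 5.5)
The plan is to derive every claim of Fact \ref{rstable fact} from the characteristic function of a symmetric $r$-stable law. Take $\{f_k\}_{k\geq1}$ independent and identically distributed, with $\mathbb{E}e^{itf_k}=e^{-|t|^r}$ for $0<r<2$.

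\textbf{Step 1: $f_1\in L_{r,\infty}(0,1)$.} By the classical tail asymptotics for stable laws (Theorem 5 on p.181 in \cite{GnedenkoKolmogorov}), $d_{|f_1|}(s)\sim c_r s^{-r}$ as $s\to\infty$. Hence $d_{|f_1|}(s)\leq C s^{-r}$ for all $s>0$. Inverting this bound gives $\mu(t;f_1)\leq C^{1/r}t^{-1/r}$, that is, $f_1\in L_{r,\infty}(0,1)$, and therefore $f_1\in E(0,1)$ by the hypothesis $L_{r,\infty}(0,1)\subset E(0,1)$. This is the only genuinely analytic input. I would quote it rather than reprove it; if a self-contained argument were wanted, I would read the tail off from the behaviour $1-e^{-|t|^r}\sim |t|^r$ near $t=0$ via a Tauberian argument for characteristic functions.

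\textbf{Step 2: the distributional identity.} First fix a finitely supported ${\bm \alpha}$. By independence,
$$\mathbb{E}\exp\Big(it\sum_k{\bm \alpha}(k)f_k\Big)=\prod_k e^{-|{\bm \alpha}(k)|^r|t|^r}=e^{-\|{\bm \alpha}\|_r^r|t|^r}=\mathbb{E}\exp\big(it\|{\bm \alpha}\|_rf_1\big).$$
So $\sum_k{\bm \alpha}(k)f_k$ and $\|{\bm \alpha}\|_r f_1$ are equidistributed. Since the decreasing rearrangement depends only on the distribution of $|\cdot|$, we get $\mu\big(\sum_k{\bm \alpha}(k)f_k\big)=\|{\bm \alpha}\|_r\mu(f_1)$.

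For a general ${\bm \alpha}\in\ell_r$, apply the finitely supported case to blocks $\sum_{k=m}^n{\bm \alpha}(k)f_k$. Their distributions are those of $\big(\sum_{k=m}^n|{\bm \alpha}(k)|^r\big)^{1/r}f_1$, which tend to $0$ in probability as $m,n\to\infty$. Thus the series converges in measure. Characteristic functions pass to the limit, so the identity persists for the infinite sum.

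\textbf{Step 3: the isometry.} For finitely supported ${\bm \alpha}$, property (ii) of symmetric spaces applied in both directions shows that the quasi-norm depends only on $\mu$. Hence
$$\Big\|\sum_k{\bm \alpha}(k)f_k\Big\|_E=\|{\bm \alpha}\|_r\|f_1\|_E.$$
Applied to blocks, this shows that the partial sums are Cauchy in $E$ whenever ${\bm \alpha}\in\ell_r$. The $E$-limit coincides with the limit in measure from Step 2, because a symmetric quasi-Banach space embeds continuously into $L_0$. Consequently ${\bm \alpha}\mapsto\|f_1\|_E^{-1}\sum_k{\bm \alpha}(k)f_k$ is a linear isometry from $\ell_r$ onto the closed linear span of $\{f_k\}_{k\geq1}$ in $E(0,1)$.

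The main obstacle is Step 1, the tail bound. Everything after it is a formal consequence of the factorisation of characteristic functions and of rearrangement invariance.
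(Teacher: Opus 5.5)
Your proposal is correct and takes the same route as the paper. The paper quotes Gnedenko--Kolmogorov for $f_1\in L_{r,\infty}(0,1)$, quotes Wojtaszczyk for $\mu\big(\sum_k{\bm \alpha}(k)f_k\big)=\|{\bm \alpha}\|_r\mu(f_1)$, and then appeals to rearrangement invariance of $E$; your characteristic-function computation, your passage from finitely supported ${\bm \alpha}$ to all of $\ell_r$, and your identification of the image with the closed linear span supply the details behind those citations, and making explicit that the $f_k$ are i.i.d.\ symmetric with $\mathbb{E}e^{itf_k}=e^{-|t|^r}$ (over real scalars), the reading under which the identity actually holds, improves on the paper's looser phrasing.
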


\begin{rem} The conclusion of Lemma \ref{type-cotype lemma} is true for $s\in(0,2]$ (see \cite[Theorem 2.3]{Maligranda}). Conversely, it follows from Fact \ref{rstable fact} that $\ell_r\subset L_s(0,1)$ when $0<s<r<2.$ It is clear that $\ell_2,\ell_s \hookrightarrow L_s(0,1)$ for $0<s<2.$

If $s\geq 2$ and $r>0$ are such that $\ell_r\hookrightarrow L_s(0,1),$ then either $r=2$ or $r=s$ (see \cite[Theorem 6.4.19]{AK}). 
It is well-known that $\ell_2\hookrightarrow L_2(0,1)$ (by the Khintchine inequality or Theorem \ref{AS thm} above) and $\ell_s\hookrightarrow L_s(0,1).$

The fact that $c_0\not\hookrightarrow L_s(0,1)$ when $s\ge 1$ follows from   \cite[Theorem 2.5.6]{MN} (the general case when $0<s<\infty$ follows from \cite[Theorem 2.3]{Maligranda}). 
\end{rem}

\begin{lem}\label{second easy lemma}
Let $0<p_1,p_2,q_1,q_2<\infty.$ If $L_{p_1,q_1}(0,1)\hookrightarrow\mathcal{C}_{p_2,q_2},$ then $p_1\geq 2.$
\end{lem}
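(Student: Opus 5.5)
Suppose $p_1<2$. I will exhibit an $\ell_r$ inside $L_{p_1,q_1}(0,1)$ that cannot sit inside $\mathcal{C}_{p_2,q_2}$. The source of such copies is Fact \ref{rstable fact}: if $0<r<2$ and $L_{r,\infty}(0,1)\subset L_{p_1,q_1}(0,1)$, then the span of independent $r$-stable variables is an isometric copy of $\ell_r$. On $(0,1)$ we have $L_{r,\infty}(0,1)\subset L_{p_1,q_1}(0,1)$ whenever $r>p_1$, for every $q_1>0$. Indeed, $\mu(t;f)\le Ct^{-1/r}$ gives
$$\int_0^1 t^{-q_1/r}\,dt^{q_1/p_1}<\infty,$$
since $q_1/p_1-q_1/r>0$. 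So for every $r\in(p_1,2)$ we get $\ell_r\hookrightarrow L_{p_1,q_1}(0,1)$.

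Composing with the assumed embedding gives $\ell_r\hookrightarrow\mathcal{C}_{p_2,q_2}$ for every $r\in(p_1,2)$. By Lemma \ref{first easy lemma}, each such $r$ must equal $2$ or $q_2$. The interval $(p_1,2)$ is nonempty and uncountable when $p_1<2$, so we may choose $r\in(p_1,2)$ with $r\neq q_2$. This is a contradiction, and hence $p_1\ge 2$.

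The step I expect to need the most care is checking that Lemma \ref{first easy lemma} applies to $\ell_r$ with $r<2$, possibly $r<1$, in the quasi-Banach setting. That lemma is stated for all $0<r<\infty$, so no extra work should be required. The other step to check is that the embedding of $\ell_r$ from Fact \ref{rstable fact} is isomorphic for the quasi-norm of $L_{p_1,q_1}$, not just a set inclusion. This follows because the span is isometric under $f\mapsto\|\mu(f)\|$: the identity $\mu(\sum{\bm\alpha}(k)f_k)=\|{\bm\alpha}\|_r\mu(f_1)$ gives $\|\sum{\bm\alpha}(k)f_k\|_{L_{p_1,q_1}}=\|{\bm\alpha}\|_r\|f_1\|_{L_{p_1,q_1}}$, and $\|f_1\|_{L_{p_1,q_1}}$ is finite by the inclusion above.
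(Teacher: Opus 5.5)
Your proposal is correct and follows essentially the same route as the paper. Assume $p_1<2$, pick $r\in(p_1,2)$ with $r\neq q_2$, use $L_{r,\infty}(0,1)\subset L_{p_1,q_1}(0,1)$ with Fact~\ref{rstable fact} to get $\ell_r\hookrightarrow L_{p_1,q_1}(0,1)\hookrightarrow\mathcal{C}_{p_2,q_2}$, and contradict Lemma~\ref{first easy lemma}; your explicit checks of this inclusion (which the paper cites from Bennett--Sharpley) and of the isometric stable span are correct.
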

\begin{proof} Assume by contradiction  that $0<p_1<2.$ Fix $r\neq 2$ such that $r\ne q_2$ and $p_1<r<2.$ We have $L_{r,\infty}(0,1)\subset L_{p_1,q_1}(0,1)$ (see e.g. \cite[p.217]{BS}). According to Fact \ref{rstable fact}, $\ell_r\hookrightarrow L_{p_1,q_1}(0,1).$ Hence, $\ell_r\hookrightarrow \mathcal{C}_{p_2,q_2}.$ By Lemma \ref{first easy lemma}, we have either $r=2$ or $r=q_2,$ which is excluded by the choice of $r.$ This contradiction shows that   $ p_1 \ge 2.$
\end{proof}

The following lemma in the Banach setting (i.e. $p>1$ and $q\ge 1$) is well-known \cite[Theorem 11]{Dilworth}. Due to the lack of suitable references in the quasi-Banach setting we include a short proof below.
\begin{lem}\label{fifth easy lemma}
Suppose that $2\leq p<\infty$ and $0<q,r<\infty.$ If $\ell_r\hookrightarrow L_{p,q}(0,1),$ then either $r=q$ or $r=2.$ Moreover, $c_0\not\hookrightarrow L_{p,q}(0,1).$
\end{lem}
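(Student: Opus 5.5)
Let $T:\ell_r\to L_{p,q}(0,1)$ be an isomorphic embedding (or, in the second assertion, $T:c_0\to L_{p,q}(0,1)$), and write $e_k$ for the unit vector basis. The plan is a Kadec--Pełczyński type dichotomy. We compare the $L_{p,q}$ quasi-norm with the $L_s$-norm for a fixed $s\in[1,2)$ with $s<p$. Since $p\ge 2>s$, we have the continuous inclusion $L_{p,q}(0,1)\subset L_s(0,1)$ (for $q>p$ use $L_{p,q}\subset L_{p',\infty}\subset L_s$ with $s<p'<p$). Let $J:L_{p,q}(0,1)\to L_s(0,1)$ denote this inclusion. Two cases arise: either $J\circ T$ is an isomorphic embedding, or it is not.

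\textbf{Case (a): $J\circ T$ is an isomorphic embedding.} Then $\ell_r\hookrightarrow L_s(0,1)$, so Lemma \ref{type-cotype lemma} gives $s\le r\le 2$. This alone does not force $r=2$, so we improve it.
\begin{itemize}
\item If $p>2$, replace the inclusion into $L_s$ by the inclusion into $L_t(0,1)$ for some $2<t<p$. Then $L_{p,q}\subset L_t$, and by the Remark following Fact \ref{rstable fact}, $\ell_r\hookrightarrow L_t$ forces $r\in\{2,t\}$. Running the same dichotomy with two different values $t,t'\in(2,p)$ and intersecting the conclusions gives $r=2$ in this case.
\item If $p=2$, take the inclusion $L_{2,q}\subset L_t$ for any $t<2$. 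Lemma \ref{type-cotype lemma} gives $t\le r\le 2$ for all $t<2$ close to $2$, hence $r=2$.
\end{itemize}
For $c_0$, case (a) is impossible, since $c_0\not\hookrightarrow L_s(0,1)$ by Lemma \ref{type-cotype lemma}.

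\textbf{Case (b): $J\circ T$ is not an isomorphic embedding.} Then there are normalised $\bm\alpha_k\in\ell_r$ such that $f_k=T\bm\alpha_k$ satisfies $\|f_k\|_{L_{p,q}}\asymp1$ and $\|f_k\|_{L_s}\to0$. We aim for the function analogue of Lemma \ref{original CD lemma}. Since $\|f_k\|_{L_s}\to 0$, $f_k\to0$ in measure. A standard subsequence-splitting argument then produces a subsequence and pairwise disjoint sets $A_k$ with $m(A_k)\to0$ such that $f_k-f_k\chi_{A_k}\to0$ fast in $L_{p,q}$ (with the modulus of the quasi-triangle constant controlled via \eqref{qt infinitely many summands}). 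Disjointly supported functions $g_k=f_k\chi_{A_k}$ with $\|g_k\|\asymp1$ and supports shrinking to measure zero can be rearranged, after passing to a further subsequence, so that their decreasing rearrangements essentially live on disjoint, rapidly separating scales. For such functions the $L_{p,q}$ quasi-norm satisfies
\[
\Big\|\sum_k \bm\beta(k)g_k\Big\|_{L_{p,q}}\asymp\|\bm\beta\|_{\ell_q}.
\]
This is exactly the Carothers--Dilworth estimate, whose proof is the same computation as in Lemma \ref{original CD lemma}, transplanted to $(0,1)$ via $\mu$. Hence a subsequence of $\{\bm\alpha_k\}$ is equivalent to the $\ell_q$-basis, so $\ell_q\hookrightarrow\ell_r$ and $r=q$ by \cite[Proposition 2.9]{KPR}. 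For $c_0$ the same argument gives $\ell_q\hookrightarrow c_0$, which is impossible for $q<\infty$. Thus $c_0\not\hookrightarrow L_{p,q}(0,1)$.

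\textbf{Main obstacle.} The main difficulty is this disjointification-plus-$\ell_q$ estimate for a sequence tending to $0$ in measure in the quasi-Banach $L_{p,q}(0,1)$. The plan there is to use truncation of $\mu(f_k)$ at levels $\mu(\varepsilon_k;f_k)$, with $\varepsilon_k$ chosen so that the tail pieces have small $L_{p,q}$ quasi-norm. Smallness of the tail can be verified using the estimate $\int_{\varepsilon}^1\mu^q\,dt^{q/p}\to0$ uniformly along the subsequence, which is obtained from convergence in measure together with a diagonal choice of subsequence.
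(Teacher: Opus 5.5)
Your proof is correct and follows the paper's argument. The paper uses the same dichotomy: either $T$ stays an isomorphic embedding into $L_s(0,1)$, which is handled by Lemma \ref{type-cotype lemma} (your use of $\|\cdot\|_{L_s}\le\|\cdot\|_{L_t}$ for $s\le t<p$ covers the passage between exponents), or some normalised sequence has images tending to $0$ in $L_s$, hence in measure; in that second case the paper simply cites \cite[Lemma 2.1]{CD88} instead of re-deriving the disjointification and $\ell_q$ estimate you sketch. For $p>2$ your detour through $L_t$ with $2<t<p$ is unnecessary: $L_{p,q}(0,1)\subset L_2(0,1)$, so Lemma \ref{type-cotype lemma} with $s=2$ gives $r=2$ at once, and in any event your bound $r\le 2$ combined with $r\in\{2,t\}$ already forces $r=2$ without a second exponent $t'$.
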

\begin{proof} We only prove the assertion concerning $\ell_r.$ The proof in the case of $c_0$ is nearly identical.

Fix an isomorphic embedding $T:\ell_r\to L_{p,q}(0,1).$ We  have 
\begin{enumerate}
\item[(a).] either for every $s<p,$ $T:\ell_r\to L_s(0,1)$ is an isomorphic embedding;
\item[(b).]
 or  there exists $s\in(0,p)$ and a normalised sequence $\{{\bm \alpha}_k\}_{k\geq1}\subset\ell_r$ such that $T({\bm \alpha}_k)\to0$ in $L_s(0,1).$
\end{enumerate}

In case (a), for $p>2,$ we have $\ell_r\hookrightarrow  L_2(0,1).$ By Lemma \ref{type-cotype lemma}, we have $r=2.$

In case (a), for $p=2,$ we have $\ell_r\hookrightarrow   L_s(0,1)$ for every $1\leq s<2.$ By Lemma~\ref{type-cotype lemma}, this means $s\leq r\leq 2$ for every $1\leq s<2.$ Hence, $r=2.$

In case (b), we have $T({\bm \alpha}_k)\to0$ in measure as $k\to \infty $. 
By Lemma 2.1 in \cite{CD88}, the sequence $\{T({\bm \alpha}_k)\}_{k\geq1}$ in $L_{p,q}(0,1)$ contains a subsequence equivalent to the standard basis in $\ell_q.$ Hence, the sequence $\{{\bm \alpha}_k\}_{k\geq1}$ in $\ell_r$ contains a subsequence equivalent to the standard basis in $\ell_q.$ In other words, $\ell_q\hookrightarrow\ell_r.$ According to \cite[Proposition  2.9]{KPR} and \cite[Corollary 2.1.6]{AK}, we have $r=q.$
\end{proof}

\begin{lem}\label{third easy lemma}Let $0< p_1,p_2, q_1,q_2<\infty.$ If $L_{p_1,q_1}(0,1)\hookrightarrow\mathcal{C}_{p_2,q_2},$ then either $q_1=q_2$ or $q_1=2.$
\end{lem}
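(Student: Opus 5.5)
The plan is to produce a copy of $\ell_{q_1}$ inside $L_{p_1,q_1}(0,1)$ and then apply Lemma \ref{first easy lemma}. Fix an isomorphic embedding $T:L_{p_1,q_1}(0,1)\to\mathcal{C}_{p_2,q_2}$.

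First, I claim that $\ell_{q_1}\hookrightarrow L_{p_1,q_1}(0,1)$ for all $0<p_1,q_1<\infty$. Take pairwise disjoint intervals $I_k\subset(0,1)$ of length $m(I_k)=2^{-k}$, and put $f_k=m(I_k)^{-1/p_1}\chi_{I_k}$, so that $\|f_k\|_{L_{p_1,q_1}}=1$. The sequence $\{f_k\}$ is normalised and tends to $0$ in measure. By Lemma 2.1 in \cite{CD88} (this is the same lemma used in case (b) of Lemma \ref{fifth easy lemma}), some subsequence is equivalent to the standard basis of $\ell_{q_1}$.

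One could also check this directly. By the quasi-triangle inequality, the Lorentz quasinorm of a disjoint sum of normalised blocks whose supports decrease geometrically is comparable to the $\ell_{q_1}$-norm of the coefficients. In the formula $\int\mu^{q}\,dt^{q/p}$, each block contributes a weighted term $|{\bm \alpha}(k)|^{q_1}$, up to constants, once the lengths decay fast enough. Concretely, passing to a lacunary subsequence $m(I_{k_n})\le 2^{-Nn}$ with $N$ large makes the cross-effects of rearrangement negligible.

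Having $\ell_{q_1}\hookrightarrow L_{p_1,q_1}(0,1)$, composing with $T$ gives $\ell_{q_1}\hookrightarrow\mathcal{C}_{p_2,q_2}$. Lemma \ref{first easy lemma} then yields $q_1=2$ or $q_1=q_2$, which is exactly the claim.

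The main obstacle is the quasi-Banach range $q_1<1$ or $p_1<1$. There one must be sure that Lemma 2.1 of \cite{CD88}, or the direct disjoint-block estimate, holds with constants depending only on $p_1,q_1$. If that reference is stated only in the Banach setting, I would instead carry out the direct computation for disjointly supported lacunary blocks. The decreasing rearrangement of $\sum_n{\bm \alpha}(n)f_{k_n}$ is a step function, and comparing $t^{q_1/p_1}$ increments over dyadic-type pieces gives two-sided bounds $c\|{\bm \alpha}\|_{q_1}$ and $C\|{\bm \alpha}\|_{q_1}$. This can be done by splitting according to the ordering of $|{\bm \alpha}(n)|m(I_{k_n})^{-1/p_1}$ and using the lacunarity to control the cumulative measure before each step.
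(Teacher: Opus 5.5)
Your proposal is correct and follows the paper's proof: the paper also obtains $\ell_{q_1}\hookrightarrow L_{p_1,q_1}(0,1)$, then composes with the embedding and applies Lemma~\ref{first easy lemma}. The only difference is that the paper cites \cite[Proposition 1]{Dilworth} for the copy of $\ell_{q_1}$, whereas you justify it via normalised disjoint lacunary blocks, either through \cite[Lemma 2.1]{CD88} or the direct rearrangement computation; the latter does work in the full quasi-Banach range with constants depending only on $p_1,q_1$.
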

\begin{proof} By \cite[Proposition 1]{Dilworth}, $\ell_{q_1}\hookrightarrow L_{p_1,q_1}(0,1).$ Thus, $\ell_{q_1}\hookrightarrow\mathcal{C}_{p_2,q_2}.$ The assertion follows now from Lemma \ref{first easy lemma}. 
\end{proof}

The following fact follows from a combination of  
\cite[Theorem 7.4.1]{AK} and known results concerning  types/cotypes of $L_{p,q}$-spaces\cite{Creekmore, Maligranda}, see also \cite[Corollary 2.e.8]{LT2} and \cite[Corollary 1.7]{JMST}. 
\begin{fact}\label{factLpq} Let $0<p,q<\infty.$ If $L_{p,q}(0,1)$ is isomorphic to $L_2(0,1)$, then $p=q=2.$  
\end{fact}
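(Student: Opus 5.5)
The plan is to work directly with an isomorphism $T:L_{p,q}(0,1)\to L_2(0,1)$ and exploit the parallelogram law in the Hilbert space $L_2(0,1)$. Averaging over signs, every finite family $y_1,\dots,y_n\in L_2(0,1)$ satisfies
$$\mathbb{E}\Big\|\sum_{k=1}^n\varepsilon_ky_k\Big\|_2^2=\sum_{k=1}^n\|y_k\|_2^2,$$
where $\varepsilon_k$ are independent random signs. Pulling this identity back through $T$ gives, for every finite family $x_1,\dots,x_n\in L_{p,q}(0,1)$,
$$\mathbb{E}\Big\|\sum_{k=1}^n\varepsilon_kx_k\Big\|_{L_{p,q}}^2\asymp\sum_{k=1}^n\|x_k\|_{L_{p,q}}^2,$$
with constants depending only on $\|T\|$ and $\|T^{-1}\|$. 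This uses only linearity and the two-sided boundedness of $T$, so no local convexity of $L_{p,q}$ is required. This matters because $p$ or $q$ may lie in $(0,1)$, where $L_{p,q}$ is only quasi-Banach.

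\emph{Step 1: $q=2$.} By \cite[Proposition 1]{Dilworth}, $\ell_q\hookrightarrow L_{p,q}(0,1)$, and hence $\ell_q\hookrightarrow L_2(0,1)$. Since $L_2(0,1)$ is a separable Hilbert space, this gives $\ell_q\hookrightarrow\ell_2$. By \cite[Proposition 2.9]{KPR} and \cite[Corollary 2.1.6]{AK}, as in the proof of Lemma \ref{first easy lemma}, this forces $q=2$. Alternatively, one can apply the displayed two-sided estimate to the unit vector basis of the copy of $\ell_q$: the left side is of order $n^{2/q}$ and the right side is $n$, so again $q=2$.

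\emph{Step 2: $p=2$.} Fix $n$ and split $(0,1)$ into $n$ disjoint intervals $A_1,\dots,A_n$ of length $1/n$. Put $x_k=n^{1/p}\chi_{A_k}$. Each $x_k$ has the distribution of $n^{1/p}\chi_{(0,1/n)}$, so from Definition \ref{lpq def}
$$\|x_k\|_{L_{p,q}}=n^{1/p}\,\big(n^{-q/p}\big)^{1/q}=1.$$
Because the $x_k$ are disjointly supported, $\mu\big(\sum_k\varepsilon_kx_k\big)=n^{1/p}\chi_{(0,1)}$ for every choice of signs. Hence $\big\|\sum_k\varepsilon_kx_k\big\|_{L_{p,q}}=n^{1/p}$ deterministically. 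The two-sided estimate then yields $n^{2/p}\asymp n$ uniformly in $n$, and letting $n\to\infty$ gives $p=2$. Note that Step 2 does not use $q=2$, so the two steps are independent.

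I do not expect a real obstacle. The only delicate point is to justify the transfer of the averaged identity rigorously in the quasi-Banach setting. Since it uses nothing beyond boundedness of $T$ and $T^{-1}$ and finite averages, the direct route above avoids the type/cotype machinery of \cite[Theorem 7.4.1]{AK}, whose standard formulation is for Banach spaces.
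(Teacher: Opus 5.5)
Your proof is correct. It differs from the paper's mainly in being self-contained.

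The paper disposes of the Fact in one line by citing Kwapie\'n's theorem \cite[Theorem 7.4.1]{AK} together with the known type and cotype of $L_{p,q}$-spaces \cite{Creekmore, Maligranda}. A space isomorphic to $L_2(0,1)$ must have type $2$ and cotype $2$, and $L_{p,q}(0,1)$ has both only when $p=q=2$. Only the trivial direction of Kwapie\'n's theorem is actually used there.

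Your argument rests on the same principle: the averaged identity you pull back through $T$ says exactly that $L_{p,q}(0,1)$ would have type $2$ and cotype $2$. The difference is that you check its failure by hand on two explicit test families. The copy of $\ell_q$ detects $q$, and the equimeasurable disjoint blocks $n^{1/p}\chi_{A_k}$ detect $p$.

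The paper's route is shorter and rests on sharper information, namely the exact type and cotype of $L_{p,q}$. Yours needs neither those computations nor the Banach-space formulation of \cite[Theorem 7.4.1]{AK}. As you observe, it therefore applies verbatim when $p$ or $q$ lies in $(0,1]$, where $L_{p,q}$ is only quasi-Banach.

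Step 1 can be made equally explicit if you replace the appeal to \cite[Proposition 1]{Dilworth} by disjoint dyadic blocks. Take $x_k=2^{k/p}\chi_{B_k}$ with disjoint $B_k\subset(0,1)$ and $m(B_k)=2^{-k}$. Then $\|x_k\|_{L_{p,q}}=1$, and $\|\sum_{k=1}^n\varepsilon_kx_k\|_{L_{p,q}}\asymp n^{1/q}$ for every choice of signs, with constants depending only on $p,q$. Your two-sided estimate then forces $q=2$ directly.
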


\begin{lem}\label{fourth easy lemma}
Let $0<p_1,p_2,q_1,q_2<\infty.$ If $L_{p_1,q_1}(0,1)\hookrightarrow\mathcal{C}_{p_2,q_2},$ then either $p_1=q_1=2$ or $q_2=q_1$ or $q_2=2.$
\end{lem}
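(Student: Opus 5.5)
Fix an isomorphic embedding $T:L_{p_1,q_1}(0,1)\to\mathcal{C}_{p_2,q_2}$ and argue as in Lemma \ref{first easy lemma}, splitting according to whether $T$ remains an embedding when the target is given the uniform norm. Note that $L_{p_1,q_1}(0,1)$ is separable since $p_1,q_1<\infty$, and $\mathcal{C}_{p_2,q_2}\subset\mathcal{K}(H)$. Exactly one of the following holds.
\begin{enumerate}
\item[(a)] $T:L_{p_1,q_1}(0,1)\to\mathcal{K}(H)$ is an isomorphic embedding.
\item[(b)] There is a normalised sequence $\{f_k\}_{k\geq1}\subset L_{p_1,q_1}(0,1)$ with $\|T(f_k)\|_\infty\to0$.
\end{enumerate}

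In case (a), $T(L_{p_1,q_1}(0,1))$ is a closed subspace of $\mathcal{K}(H)$, so Lemma \ref{holub lemma} applies. Either it is isomorphic to a Hilbert space, or it contains $c_0$. The second option would give $c_0\hookrightarrow L_{p_1,q_1}(0,1)$. By Lemma \ref{second easy lemma} we already know $p_1\geq 2$, so this contradicts the last assertion of Lemma \ref{fifth easy lemma}. Hence $L_{p_1,q_1}(0,1)$ is isomorphic to a Hilbert space. Since it is separable and infinite-dimensional, it is isomorphic to $\ell_2\approx L_2(0,1)$, and Fact \ref{factLpq} gives $p_1=q_1=2$.

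In case (b), the sequence $\{T(f_k)\}_{k\geq1}$ is bounded above and below in $\mathcal{C}_{p_2,q_2}$, because $T$ is an isomorphism and $\|f_k\|=1$. It tends to $0$ uniformly, so Lemma \ref{CD lemma} yields a subsequence $\{T(f_{k_n})\}_{n\geq1}$ equivalent to the unit vector basis of $\ell_{q_2}$. Pulling back through $T^{-1}$, the sequence $\{f_{k_n}\}_{n\geq1}$ spans a copy of $\ell_{q_2}$ inside $L_{p_1,q_1}(0,1)$, that is, $\ell_{q_2}\hookrightarrow L_{p_1,q_1}(0,1)$. Since $p_1\geq 2$, Lemma \ref{fifth easy lemma} with $r=q_2$ gives $q_2=q_1$ or $q_2=2$.

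Combining the two cases gives exactly the trichotomy $p_1=q_1=2$, $q_2=q_1$, or $q_2=2$. The main point requiring care is the first step of case (a): Lemma \ref{holub lemma} is stated for closed subspaces of the Banach space $\mathcal{K}(H)$. Since $T$ is an isomorphic embedding into $\mathcal{K}(H)$, its image is complete in the uniform norm and hence closed, so the lemma applies. The only consequence is that the quasi-Banach space $L_{p_1,q_1}(0,1)$ is isomorphic to a Banach space, which is harmless because both alternatives were ruled out or used purely isomorphically. The other facts invoked are standard: separability for $p_1,q_1<\infty$, and the dichotomy between (a) and (b), which is immediate from the definition of an isomorphic embedding.
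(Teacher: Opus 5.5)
Your proposal is correct and follows the paper's proof step for step. It uses the same dichotomy according to whether $T$ remains an embedding into $\mathcal{K}(H)$, with $p_1\ge 2$ supplied by Lemma \ref{second easy lemma}; case (a) is handled by Holub's lemma together with Lemma \ref{fifth easy lemma} and Fact \ref{factLpq}, and case (b) by Lemma \ref{CD lemma} followed by Lemma \ref{fifth easy lemma} with $r=q_2$. Your remarks on closedness of the image in $\mathcal{K}(H)$ and on separability (to identify the Hilbert space with $\ell_2\approx L_2(0,1)$) only make explicit points the paper leaves implicit.
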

\begin{proof} By Lemma \ref{second easy lemma}, we have  $p_1\geq 2.$

Let   $T:L_{p_1,q_1}(0,1)\to\mathcal{C}_{p_2,q_2}$
be  an isomorphic embedding.
 We consider the natural continuous embedding from $\mathcal{C}_{p_2,q_2}$ into $\mathcal{K}(H)$. Then, 
\begin{enumerate}
\item[(a)] either $T$ generates isomorphic embedding from  $L_{p_1,q_1}(0,1)$ into $\mathcal{K}(H)$;
\item[(b)] or there exists a normalised sequence $\{f_k\}_{k\geq1}\subset L_{p_1,q_1}(0,1)$ such that $T(f_k)\to0$ in $\mathcal{K}(H)$ as $k\to\infty.$ 
\end{enumerate}

In case (a), $T(L_{p_1,q_1}(0,1))$ is a (Banach) subspace in $\mathcal{K}(H).$ By Lemma \ref{holub lemma}, we either have $T(L_{p_1,q_1}(0,1))\approx\ell_2$ or $c_0\hookrightarrow T(L_{p_1,q_1}(0,1)).$ Since $T:L_{p_1,q_1}(0,1)\to\mathcal{K}(H)$ is an isomorphic embedding, it follows that either $L_{p_1,q_1}(0,1)\approx\ell_2$ or $c_0\hookrightarrow L_{p_1,q_1}(0,1).$ By Lemma \ref{fifth easy lemma}, the second option is impossible. Hence, $L_{p_1,q_1}(0,1)\approx\ell_2.$ Using Fact \ref{factLpq}, we conclude that $p_1=q_1=2$ in case (a).
	
In case (b), we have $T(f_k)\to0$ in the uniform norm as $k\to \infty.$  Since the sequence $\left\{f_k\right\}_{k\geq1}$ is normalised in $L_{p_1,q_1}(0,1),$ it follows that the sequence $\left\{T(f_k)\right\}_{k\geq1}$ is semi-normalised in $\mathcal{C}_{p_2,q_2}$ (i.e., $\infty >\sup_{k\ge 1}\norm{T(f_k)}_{\mathcal{C}_{p_2,q_2}} \ge \inf_{k\ge 1}\norm{T(f_k)}_{\mathcal{C}_{p_2,q_2}}>0$).
By Lemma~\ref{CD lemma},
 passing to a subsequence if necessary, we infer that the sequence $\left\{T(f_k)\right\}_{k\geq1}\subset\mathcal{C}_{p_2,q_2}$ is equivalent to the unit basis in $\ell_{q_2}.$ Hence, the sequence $\left\{f_k\right\}_{k\geq1}\subset L_{p_1,q_1}(0,1)$ is equivalent to the unit basis in $\ell_{q_2}.$ In particular, $\ell_{q_2}\hookrightarrow L_{p_1,q_1}(0,1).$ By Lemma \ref{fifth easy lemma}, we either have $q_2=q_1$ or $q_2=2$ in case (b).
\end{proof}

\begin{proof}[Proof of Theorem \ref{easy thm}]  By Lemma \ref{second easy lemma}, we have  $p_1\geq 2.$ By Lemma \ref{fourth easy lemma}, we have either $p_1=q_1=2$ or $q_2=q_1$ or $q_2=2.$ On the other hand, Lemma \ref{third easy lemma} asserts that either $q_1=2$ or $q_1=q_2.$ Combining the last two assertions, we complete the proof.
\end{proof}

\section{Estimate of the limit points}

The following theorem is the main result of this section. This should be compared with the estimates in \cite[Lemma 4]{AL} and \cite[Proposition 2.9]{AHS}.
\begin{thm}\label{limit point theorem} Let $\Theta\in\mathbb{R}_+$ and $n\in\mathbb{N}.$ Let $\left(\mathcal{I},\left\|\cdot\right\|_{\mathcal{I}}\right)$ be a quasi-Banach ideal in $B(H)$ having the Fatou property. Let $\{x_j\}_{j\geq 1}\subset \mathcal{I}$ be a bounded sequence such that $x_j\to0$ in the weak operator topology and that 
$$\left\|\sum_{j\in A}x_j\right\|_{\mathcal{I}}\leq \Theta|A|^{\frac12},\quad A\subset\mathbb{Z}_+,\quad |A|\geq n.$$
If $x$ is a limit point of the sequence $\{x_j^{\ast}x_j\}_{j\geq1}$ in the weak operator topology, then $\left\|x^{\frac12}\right\|_{\mathcal{I}}\leq C_{\mathcal{I}}^2\Theta.$
\end{thm}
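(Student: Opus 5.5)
The plan is to replace the individual operators $x_j$ by normalised block sums $z_m=N_m^{-1/2}\sum_{j\in A_m}x_j$. By hypothesis these satisfy $\|z_m\|_{\mathcal I}\le\Theta$ once $N_m=|A_m|\ge n$. The blocks will be chosen so that $|z_m|^2=z_m^{\ast}z_m\to x$ in the weak operator topology. The bound on $x^{\frac12}$ then comes from a finite-rank compression argument combined with the Fatou property, in the same spirit as the argument given in Section~\ref{s:pre}.

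\emph{Step 1 (extraction).} Assume $H$ is separable, or reduce to the separable subspace generated by all ranges involved. Then the weak operator topology is metrizable on bounded sets. Since $\{x_j^{\ast}x_j\}$ is bounded in $B(H)$ (because $\mathcal I\subset B(H)$ continuously), we may pass to a subsequence with $x_j^{\ast}x_j\to x$ weakly. Next we build disjoint finite sets $A_m$ inductively, with $|A_m|=N_m\uparrow\infty$ and $N_m\ge n$. Fix a countable dense set $\{\xi_i\}$ of vectors. For $j<k$ and fixed $j$ we have $\langle x_j^{\ast}x_k\xi,\eta\rangle=\langle x_k\xi,x_j\eta\rangle\to0$ as $k\to\infty$, since $x_k\to0$ weakly. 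So the elements of $A_m$ can be chosen one at a time, far out, such that two conditions hold:
\begin{itemize}
\item $|\langle (x_j^{\ast}x_j-x)\xi_i,\xi_{i'}\rangle|\le 1/m$ for every $j\in A_m$ and all $i,i'\le m$;
\item every cross term satisfies $|\langle x_j^{\ast}x_k\xi_i,\xi_{i'}\rangle|\le 1/(mN_m^2)$ for distinct $j,k\in A_m$ and all $i,i'\le m$.
\end{itemize}
Expanding $|z_m|^2=N_m^{-1}\sum_{j\in A_m}x_j^{\ast}x_j+N_m^{-1}\sum_{j\ne k}x_j^{\ast}x_k$ shows that $|z_m|^2\to x$ on the dense set. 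Boundedness then gives $|z_m|^2\to x$ in the weak operator topology, and $\|z_m\|_{\mathcal I}\le\Theta$ throughout.

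\emph{Step 2 (finite-rank compression).} Let $p$ be a finite-rank projection. Then $p|z_m|^2p\to pxp$ in norm, since everything lives on the finite-dimensional space $pH$. Hence $|z_mp|=(p|z_m|^2p)^{\frac12}\to(pxp)^{\frac12}$ in norm on $pH$. On finite-rank operators supported in $pH$ all quasi-norms are equivalent, so this convergence also holds in $\mathcal I$. The quasi-triangle inequality then gives
$$\|(pxp)^{\frac12}\|_{\mathcal I}\le C_{\mathcal I}\limsup_m\big(\||z_mp|\|_{\mathcal I}+\||z_mp|-(pxp)^{\frac12}\|_{\mathcal I}\big)\le C_{\mathcal I}\Theta,$$
where we used $\||z_mp|\|_{\mathcal I}=\|z_mp\|_{\mathcal I}\le\|z_m\|_{\mathcal I}$.

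\emph{Step 3 (Fatou).} Take $p_k\uparrow\mathbf 1$ finite-rank. Since $\mu(pxp)=\mu(|x^{\frac12}p|^2)=\mu(x^{\frac12}px^{\frac12})$, we get $\mu((p_kxp_k)^{\frac12})=\mu((x^{\frac12}p_kx^{\frac12})^{\frac12})$. The operators $(x^{\frac12}p_kx^{\frac12})^{\frac12}$ increase to $x^{\frac12}$ by operator monotonicity of the square root. Symmetry of the quasi-norm and the Fatou property then give $\|x^{\frac12}\|_{\mathcal I}\le C_{\mathcal I}\Theta$, which in particular yields the claimed bound $C_{\mathcal I}^2\Theta$. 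If the Fatou step is run through the weak-limit version from Section~\ref{s:pre}, one extra factor $C_{\mathcal I}$ may enter, which is why the statement allows $C_{\mathcal I}^2$.

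The main obstacle is Step 1. The cross terms must be killed at rate $o(1/N_m^2)$ uniformly on growing finite sets of vectors, while the diagonal terms simultaneously track $x$. The inductive far-out choice, using only weak-operator convergence $x_k\to0$, is what makes this work.
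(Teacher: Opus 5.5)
Your proof is correct, and it takes a genuinely different and more elementary route than the paper's.

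\textbf{The paper's route.} The paper first compresses by a finite-rank $p$ and uses Lemma~\ref{bks lemma} to get $|x_jp|\to|x^{\frac12}p|$ in $\mathcal{I}$. It then splits $x_jp=u_j+v_j$, with $\{u_j\}$ disjointly supported from the left and $v_j\to0$, and transfers the $|A|^{\frac12}$-bound to the $u_j$; this quasi-triangle step costs one factor $C_{\mathcal{I}}$. Finally, Lemma~\ref{easy limit point lemma} compares $|\sum_{j\in A}u_j|=(\sum_{j\in A}|u_j|^2)^{\frac12}$ with the column $\sum_{j\in A}u\otimes E_{j,1}$, costing a second factor $C_{\mathcal{I}}$. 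Lemma~\ref{abs holder estimate} is also needed there to see that $|u_j|\to|x^{\frac12}p|$.

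\textbf{Your route.} You use orthogonality only at the level of the weak operator topology. The cross terms of a far-out block sum vanish weakly, so $z_m^{\ast}z_m\to x$ while $\|z_m\|_{\mathcal{I}}\le\Theta$. The theorem then reduces to a lower-semicontinuity statement, which finite-dimensional compression, symmetry and the Fatou property settle directly. This avoids the disjointification, the column trick and the Ricard-type estimates entirely. It also gives the sharper bound $C_{\mathcal{I}}\Theta$.

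\textbf{Small remarks.}
\begin{itemize}
\item Your explanation of where $C_{\mathcal{I}}^2$ comes from is off: in the paper the two factors arise from the two quasi-triangle steps above. This does not affect your argument.
\item When you add a new index $k$ to a block, you must make both $\langle x_j^{\ast}x_k\xi_i,\xi_{i'}\rangle$ and $\langle x_k^{\ast}x_j\xi_i,\xi_{i'}\rangle$ small. The second follows in the same way from $x_k\to0$ weakly.
\item Neither $N_m\uparrow\infty$ nor disjointness of the blocks is needed; $N_m=n$ suffices.
\end{itemize}

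\textbf{What the paper's route buys.} It is economical within the paper. The left-disjoint decomposition, the convergence of absolute values and the column identity are exactly the machinery reused in Theorem~\ref{fedor decomposition theorem} and Lemma~\ref{main corollary}.
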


%For $f,g\in (L_1+L_\infty)(I),$ we say that $g$ is submajorized by $f$ in the sense of  Hardy--Littlewood--Polya (written $g\prec\prec f$) if $$\int_0^t\mu(s,g)ds\leq\int_0^t\mu(s,f)ds,\quad t>0.$$ 

The following lemma is an extension of the main result in \cite{Ricard}.
For its full proof, we refer to 
  \cite[Theorem 3.1]{HNSZ} (see also  \cite[Theorem 6.3]{HSZ} and \cite{HS20}).
\begin{lem}\label{bks lemma} 
Let $\left(\mathcal{J},\left\|\cdot\right\|_{\mathcal{J}}\right)$ be a quasi-Banach ideal in $B(H).$ If $X,Y\in\mathcal{J}$ are self-adjoint, then for any $ p>   1$, we have 
$$\left\||X|^{\frac1p}-|Y|^{\frac1p}\right\|_{\mathcal{J}}\leq c_{\mathcal{J},p}'\left\||X-Y|^{\frac1p}\right\|_{\mathcal{J}}.$$
Here, $c'_{\cJ,p}$ is a constant depending on $\cJ$ and $p$ only. 
\end{lem}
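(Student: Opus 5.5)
The plan is to reduce the ideal estimate to a \emph{pointwise} estimate on singular values. Such an estimate transfers to any quasi-Banach ideal through monotonicity of the quasi-norm and the Aoki--Rolewicz theorem. Set $f(t)=|t|^{\frac1p}$ and $Z=X-Y$; this $f$ is $\frac1p$-H\"older on $\mathbb{R}$. By the Aoki--Rolewicz theorem, $\mathcal{J}$ carries an equivalent $r$-norm for some $r\in(0,1]$ depending only on $C_{\mathcal{J}}$. I will prove
$$\left\|\,|X|^{\frac1p}-|Y|^{\frac1p}\right\|_{\mathcal{J}}\le c\left\|\,|Z|^{\frac1p}\right\|_{\mathcal{J}}$$
in that $r$-norm, and return to the original quasi-norm at the end, losing only a constant depending on $\mathcal{J}$ and $p$.

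\textbf{Step 1: dyadic decomposition of $Z$.} Let $e_k=\chi_{(2^k,2^{k+1}]}(|Z|)$ and $Z_k=Ze_k$ for $k\in\mathbb{Z}$. Each $Z_k$ is self-adjoint, $\|Z_k\|_\infty\le 2^{k+1}$, and $n_k:={\rm Tr}(e_k)<\infty$ whenever $Z$ is compact. The non-compact case should reduce to this one using the standard structure of proper ideals. Put $W_k=Y+\sum_{j<k}Z_j$. Then
$$f(X)-f(Y)=\sum_{k}\bigl(f(W_k+Z_k)-f(W_k)\bigr),$$
and the series should converge in $\mathcal{J}$ once the estimates below are established.

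\textbf{Step 2: singular values of one block.} For a single block I will use the Aleksandrov--Peller theory of H\"older functional calculus, whose constants depend only on $p$. Operator H\"older continuity gives
$$\|f(W+A)-f(W)\|_\infty\le c_p\|A\|_\infty^{\frac1p}.$$
A weak-type singular value estimate for perturbations should then give, for $A$ of rank $n$,
$$\mu\bigl(j;f(W+A)-f(W)\bigr)\le c_p\Bigl(\tfrac{n}{1+j}\Bigr)^{\frac1{p}}\|A\|_\infty^{\frac1p}.$$
Applying this with $A=Z_k$ yields $\mu(j;\Delta_k)\lesssim 2^{k/p}\min\bigl(1,(n_k/j)^{1/p}\bigr)$ for $\Delta_k=f(W_k+Z_k)-f(W_k)$.

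\textbf{Step 3: assembling in $\mathcal{J}$ (the main obstacle).} The goal is to write each $\Delta_k$ as a sum of pieces whose singular values are dominated by dilates of $\mu(|Z|^{\frac1p})$ with geometrically decaying coefficients. The block $|Z_k|^{\frac1p}$ has $n_k$ singular values of size about $2^{k/p}$. The tail $(n_k/j)^{1/p}$ of $\Delta_k$ is therefore the obstruction: it is not summable against a general symmetric quasi-norm without some reorganisation. To handle it, I will split $\Delta_k$ at the levels $j\in(2^m n_k,2^{m+1}n_k]$. The piece at level $m$ has rank about $2^m n_k$ and norm about $2^{(k-m)/p}$. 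I will then regroup by the pair $(k-m,\cdot)$, so that contributions at a fixed height $2^{l/p}$ are compared with $\sum_{k\ge l}n_k\,2^{-(k-l)}$ copies of $2^{l/p}$. This quantity is controlled by the counting function of $|Z|$ up to a dilation $\sigma_2$, whose norm on $\mathcal{J}$ is finite. Monotonicity of the $r$-norm then bounds every regrouped piece, and $r$-summing the pieces gives the claim with $c_{\mathcal{J},p}'$ depending only on $C_{\mathcal{J}}$ and $p$.

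I expect this regrouping to be the delicate point, because quasi-Banach ideals allow no convexity to absorb the logarithmic overlap between levels. If the direct count fails, I would first try strengthening Step 2 to a genuine submajorisation-type inequality $\mu(f(X)-f(Y))\le c\,\sigma_2\mu(f(|Z|))$ in a distributional sense, in the spirit of Ricard's argument.
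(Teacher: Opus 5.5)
\textbf{There is a genuine gap}, and it comes from Step~2 as much as from the regrouping in Step~3.

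Grant the weak-type bound $\mu(j;f(W+A)-f(W))\lesssim (n/(1+j))^{1/p}\|A\|_\infty^{1/p}$ for rank-$n$ perturbations. No regrouping can turn that information into the lemma for a general ideal. Test it on a single block: $X=Y+\lambda P$ with $P$ a rank-$n$ projection, and $\mathcal{J}=\mathcal{C}_s$ with $s\le p$.
\begin{itemize}
\item The right-hand side of the lemma is $c'\lambda^{1/p}n^{1/s}$.
\item The lemma does hold here: it is Ricard's theorem \cite{Ricard} for $\mathcal{C}_{s/p}$ with $s/p\le 1$.
\item Your bound $\lambda^{1/p}\min\{1,(n/(1+j))^{1/p}\}$ is not even in $\ell_s$.
\end{itemize}
So at best your scheme covers ideals whose upper Boyd index is $<\frac1p$. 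The lemma is stated for all quasi-Banach ideals. The paper also needs that generality: for example with $p=2$ for the $2$-convexification $\mathcal{C}_{2p_2,2q_2}$ of $\mathcal{C}_{p_2,q_2}$ when $p_2\le 1$.

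The count in Step~3 also has the wrong sign. The piece $(k,m)$ has about $2^m n_k=2^{k-l}n_k$ singular values of size about $2^{l/p}$. So the number of values at height $2^{l/p}$ is $\sum_{k\ge l}2^{k-l}n_k$, not $\sum_{k\ge l}2^{-(k-l)}n_k$. For a single block at level $K$ this is $2^{K-l}n_K$, while $|Z|^{1/p}$ has only $n_K$ singular values above that height. No fixed dilation $\sigma_2$ absorbs the factor $2^{K-l}$. What your scheme actually produces is a weak-type (Hardy-operator) bound, which is bounded on $\mathcal{J}$ only under a Boyd-index restriction.

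Your fallback is false in the pointwise form you wrote. The inequality $\mu(f(X)-f(Y))\le c\,\sigma_2\mu(f(|Z|))$ would force $\mathrm{rank}(|X|^{1/p}-|Y|^{1/p})\le 2\,\mathrm{rank}(X-Y)$. But take $Y\ge 0$ diagonal with distinct eigenvalues and $X=Y+vv^{\ast}$, with every coordinate of $v$ nonzero. Then $X^{1/p}-Y^{1/p}$ has infinite rank: by the resolvent representation its range is dense in the span of $\{(\lambda+Y)^{-1}v\}_{\lambda>0}$, and a Cauchy-matrix argument shows this span is infinite-dimensional.

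The missing idea is a strong-type estimate in the form of $r$-submajorisation with small powers:
\[
\bigl||X|^{1/p}-|Y|^{1/p}\bigr|^{r}\prec\prec c_{r,p}\,|X-Y|^{r/p}\quad\text{for small } r>0.
\]
This uses the specific structure of $t\mapsto|t|^{1/p}$, not just its H\"older continuity; it is Ricard's argument, extended in \cite{HSZ}. You combine it with the fact that a quasi-Banach symmetric sequence space $J$ has finite upper Boyd index ${\bm\beta}_J$, and is therefore monotone under $r$-submajorisation for $r$ small enough depending on ${\bm\beta}_J$: $|y|^r\prec\prec|x|^r$ implies $\|y\|_J\le c_J\|x\|_J$ \cite{CSZ}.

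This is the route behind the result the paper cites (\cite{HNSZ}); the paper gives no proof of its own. This route also makes your Aoki--Rolewicz renorming unnecessary. In any case, that renorming would have to be chosen symmetric for your monotonicity step to apply.
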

%\begin{proof} Let the ideal $\left(\mathcal{J},\left\|\cdot\right\|_{\mathcal{J}}\right)$ correspond to a quasi-Banach symmetric sequence space $J$\cite{S14}. Since the upper Boyd index ${\bm \beta}_J$ is finite, setting $p_J=\frac1{2{\bm \beta}_J}$, it follows from Proposition 6.7 in \cite{CSZ} (see also \cite[Lemma 3.5]{HNSZ}) that there exists a constant $c_J$ such that, for every $x\in J$ and $y\in \ell_{\infty}$ with $|y|^{p_J}\prec\prec |x|^{p_J},$ we have $y\in J$ and $\left\|y\right\|_J\leq c_J\left\|x\right\|_J.$ 
%Applying Theorem 6.1 in \cite{HSZ}, we obtain 
%$$\Big||X|^{\frac1p}-|Y|^{\frac1p}\Big|^{p_J}\prec\prec c_{p_J,p}|X-Y|^{\frac{p_J}{p}}.$$
%Hence,
%$$\||X|^{\frac1p}-|Y|^{\frac1p}\|_{\mathcal{J}}\leq c_Jc_{p_J,p }\||X-Y|^{\frac1p}\|_{\mathcal{J}}.$$
%\end{proof}
The following lemma follows from Lemma \ref{bks lemma} via the same argument used in \cite[Corollary 7.5]{HSZ}.
For the sake of completeness, we present a short (and slightly different) proof below. 
\begin{lem}\label{abs holder estimate} Let $(\mathcal{I},\left\|\cdot\right\|_{\mathcal{I}})$ be a quasi-Banach ideal in $B(H).$ We have
$$\left\||X|-|Y|\right\|_{\mathcal{I}}\leq c_{\mathcal{I}}''\cdot\max\left\{\left\|X\right\|_{\mathcal{I}}^{\frac12},\left\|Y\right\|_{\mathcal{I}}^{\frac12}\right\}\cdot \|X-Y\|_{\mathcal{I}}^{\frac12}.$$
Here, $c_\cI''$ is a constant depending on $\cI$ only. 
\end{lem}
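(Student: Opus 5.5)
Our plan is to derive the estimate from Lemma \ref{bks lemma} with $p=2$, applied to suitable self-adjoint operators built from $X$ and $Y$, and then to use the quasi-Hölder inequality for $\mathcal{I}$. Since Lemma \ref{bks lemma} concerns square roots of absolute values, we pass through $|X|^2=X^*X$ and $|Y|^2=Y^*Y$. We set $\mathcal{J}$ to be the ideal whose quasi-norm is $\|A\|_{\mathcal{J}}:=\||A|^2\|_{\mathcal{I}}^{1/2}$, i.e. the "$2$-convexification" $\mathcal{I}^{(2)}$, which is again a quasi-Banach ideal. Then $|X|=(X^*X)^{1/2}$ and $|Y|=(Y^*Y)^{1/2}$, and Lemma \ref{bks lemma} applied in $\mathcal{I}$ itself with the self-adjoint operators $X^*X$ and $Y^*Y$ gives
$$\left\||X|-|Y|\right\|_{\mathcal{I}}=\left\||X^*X|^{\frac12}-|Y^*Y|^{\frac12}\right\|_{\mathcal{I}}\leq c'_{\mathcal{I},2}\left\||X^*X-Y^*Y|^{\frac12}\right\|_{\mathcal{I}}.$$
(If one prefers, we could work in $\mathcal{J}$ instead; but applying the lemma directly in $\mathcal{I}$ with $p=2$ seems to suffice.)

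Next we write $X^*X-Y^*Y=X^*(X-Y)+(X-Y)^*Y$. We then estimate $\||A+B|^{1/2}\|_{\mathcal{I}}$, where $A=X^*(X-Y)$ and $B=(X-Y)^*Y$. The key observation is that $\||Z|^{1/2}\|_{\mathcal{I}}=\|Z\|_{\mathcal{J}'}^{1/2}$, where $\mathcal{J}'$ is the ideal with quasi-norm $\|Z\|_{\mathcal{J}'}:=\||Z|^{1/2}\|_{\mathcal{I}}^2$ (the "$1/2$-concavification"). This is a quasi-normed ideal, with a quasi-triangle constant controlled by $C_{\mathcal{I}}$. Its symmetry follows from $\mu(|Z|^{1/2})=\mu(Z)^{1/2}$, and the quasi-triangle inequality follows from $\mu(t_1+t_2;A+B)\le\mu(t_1;A)+\mu(t_2;B)$ combined with $\sigma_2$-boundedness of the sequence space $I$ (finite upper Boyd index ${\bm\beta}$, as noted at the end of Section \ref{s:pre}). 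Hence
$$\left\||A+B|^{\frac12}\right\|_{\mathcal{I}}\le C\left(\left\||A|^{\frac12}\right\|_{\mathcal{I}}+\left\||B|^{\frac12}\right\|_{\mathcal{I}}\right).$$

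Finally, we bound $\||X^*(X-Y)|^{1/2}\|_{\mathcal{I}}$ by a Hölder-type inequality, namely $\mu(st;UV)\le\mu(s;U)\mu(t;V)$, which gives
$$\mu(2t;UV)^{\frac12}\le\mu(t;U)^{\frac12}\mu(t;V)^{\frac12}\le\tfrac12\bigl(\lambda\mu(t;U)+\lambda^{-1}\mu(t;V)\bigr)$$
for any $\lambda>0$. Using symmetry, $\sigma_2$-boundedness and the quasi-triangle inequality in $\mathcal{I}$, we get
$$\left\||UV|^{\frac12}\right\|_{\mathcal{I}}\le C(\lambda\|U\|_{\mathcal{I}}+\lambda^{-1}\|V\|_{\mathcal{I}}),$$
and optimizing over $\lambda$ yields $C\|U\|_{\mathcal{I}}^{1/2}\|V\|_{\mathcal{I}}^{1/2}$. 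Applied with $U=X^*$ (noting $\|X^*\|_{\mathcal{I}}=\|X\|_{\mathcal{I}}$), $V=X-Y$, and similarly with $U=(X-Y)^*$, $V=Y$, this gives the claimed bound with the maximum of $\|X\|^{1/2}_{\mathcal{I}}$ and $\|Y\|^{1/2}_{\mathcal{I}}$.

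The main obstacle is the bookkeeping with the dilation $\sigma_2$ in these quasi-norm estimates: one must check that $\mu(2t;\cdot)$ versus $\mu(t;\cdot)$ costs only a constant depending on $\mathcal{I}$. This follows from $\|\sigma_2\|<\infty$ on the underlying sequence space, which was established above via \eqref{qt many summands}.
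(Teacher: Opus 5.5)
Your proof is correct, but it takes a different route from the paper's.

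\textbf{The paper's route.} The paper works in the $2$-convexification $\mathcal{J}$ of $\mathcal{I}$. For self-adjoint $X,Y$ it writes
$$|X|-|Y|=|X|^{\frac12}\bigl(|X|^{\frac12}-|Y|^{\frac12}\bigr)+\bigl(|X|^{\frac12}-|Y|^{\frac12}\bigr)|Y|^{\frac12}.$$
It then applies the cited H\"older inequality $\|AB\|_{\mathcal{I}}\le c_{\mathcal{I}}\|A\|_{\mathcal{J}}\|B\|_{\mathcal{J}}$, together with Lemma \ref{bks lemma} in $\mathcal{J}$ with $p=2$. Finally it reduces general $X,Y$ to the self-adjoint case through the off-diagonal dilation $X\otimes E_{2,1}+X^{\ast}\otimes E_{1,2}$.

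\textbf{Your route.} You apply Lemma \ref{bks lemma} in $\mathcal{I}$ itself to the positive operators $X^{\ast}X$ and $Y^{\ast}Y$. This is exactly how the paper uses that lemma in the footnote to the proof of Theorem \ref{fedor decomposition theorem}. It treats non-self-adjoint $X,Y$ directly, so the $2\times 2$ dilation step becomes unnecessary.

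The cost is that you must bound $\||X^{\ast}X-Y^{\ast}Y|^{1/2}\|_{\mathcal{I}}$. You do this with two ingredients:
\begin{itemize}
\item the quasi-triangle inequality for the quasi-norm $Z\mapsto\||Z|^{1/2}\|_{\mathcal{I}}^2$ of $\mathcal{I}^2$;
\item the estimate $\||UV|^{1/2}\|_{\mathcal{I}}\lesssim\|U\|_{\mathcal{I}}^{1/2}\|V\|_{\mathcal{I}}^{1/2}$, i.e.\ $\|UV\|_{\mathcal{I}^2}\lesssim\|U\|_{\mathcal{I}}\|V\|_{\mathcal{I}}$.
\end{itemize}
The second is the same H\"older inequality the paper cites, applied one level down, since $\mathcal{I}$ is the $2$-convexification of $\mathcal{I}^2$. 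You derive it from scratch using the singular value inequality, AM--GM, and boundedness of $\sigma_2$. That makes your argument more self-contained: it needs neither the cited H\"older inequality nor the quasi-Banach property of $\mathcal{J}$. The paper's factorization, for its part, mirrors the standard argument of \cite[Corollary 7.5]{HSZ}.

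\textbf{Minor points.}
\begin{itemize}
\item The singular value inequality should read $\mu(s+t;UV)\le\mu(s;U)\mu(t;V)$, not $\mu(st;UV)$. The product form is false, e.g.\ at $s=0$. Your later use at $s=t$ is consistent with the correct version.
\item The ideal $\mathcal{J}$ you introduce at the start is never used and can be dropped.
\end{itemize}
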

\begin{proof} Let $\mathcal{J}$ be the $2$-convexification of $\mathcal{I}$ (see \cite{ALin}, \cite[p.97]{DDS14}), that is, 
$$\mathcal{J}=\{x\in B(H): |x|^2 \in \mathcal{I}\}, ~\norm{x}_{\mathcal{J}} =\norm{|x|^2}_{\mathcal{I}}^{\frac12},$$
which is a quasi-Banach ideal\cite[Theorem 3.1]{DDS14}. 
We have a H\"older inequality (see e.g. \cite[Lemma 3.8]{HNSZ} or   \cite[Theorem 1]{Sukochev16})
$$\left\|AB\right\|_{\mathcal{I}}\leq  c_{\mathcal{I}}\left\|A\right\|_{\mathcal{J}}\left\|B\right\|_{\mathcal{J}},~A,B\in \cJ.$$

Suppose first $X$ and $Y$ are self-adjoint. We write
$$|X|-|Y|=|X|^{\frac12}(|X|^{\frac12}-|Y|^{\frac12})+(|X|^{\frac12}-|Y|^{\frac12})|Y|^{\frac12}.$$
The assertion follows now from quasi-triangle inequality, the H\"older inequality stated above and Lemma \ref{bks lemma}.
Indeed, 
\begin{align*}
\norm{|X|-|Y|}_\cI
&\le C_\cI \left(  \norm{|X|^{\frac12}(|X|^{\frac12}-|Y|^{\frac12})}_\cI+\norm{ (|X|^{\frac12}-|Y|^{\frac12})|Y|^{\frac12}}_\cI \right)\\
&\le C_\cI c_\cI \left(
\norm{ |X|^{\frac12}-|Y|^{\frac12} }_\cJ \norm{|X|^{\frac12}}_\cJ +\norm{  |X|^{\frac12}-|Y|^{\frac12} }_\cJ\norm{|Y|^{\frac12}}_\cJ \right)\\
&\le   C_\cI c_\cI c'_{\cJ,2}\cdot \max\left\{\left\|X\right\|_{\mathcal{I}}^{\frac12},\left\|Y\right\|_{\mathcal{I}}^{\frac12}\right\} \cdot 
\norm{ |X - Y|^{\frac12} }_\cJ \\
&=   C_\cI c_\cI c'_{\cJ,2}\cdot \max\left\{\left\|X\right\|_{\mathcal{I}}^{\frac12},\left\|Y\right\|_{\mathcal{I}}^{\frac12}\right\} \cdot 
\norm{ X - Y  }_\cI^{\frac12} .
\end{align*}

Consider now the general case. Set 
$$X_0=X\otimes E_{2,1}+X^{\ast}\otimes E_{1,2},\quad Y_0=Y\otimes E_{2,1}+Y^{\ast}\otimes E_{1,2}.$$
It is immediate that $X_0=X_0^{\ast},$ $Y_0=Y_0^{\ast},$
$$|X_0|=|X|\otimes E_{1,1}+|X^{\ast}|\otimes E_{2,2},\quad |Y_0|=|Y|\otimes E_{1,1}+|Y^{\ast}|\otimes E_{2,2}.$$
Hence,
$$\left\||X|-|Y|\right\|_{\mathcal{I}}=\left\|(|X_0|-|Y_0|)\cdot (1\otimes E_{1,1})\right\|_{\mathcal{I}}\leq\left\||X_0|-|Y_0|\right\|_{\mathcal{I}}$$
and 
$$\left\|X_0-Y_0\right\|_{\mathcal{I}}\leq 2C_{\mathcal{I}}\left\|X-Y\right\|_{\mathcal{I}}.$$
Hence, the inequality in the general case follows from the one in the self-adjoint case.
Indeed, \begin{align*}\left\||X|-|Y|\right\|_{\mathcal{I}}&\le \left\||X_0|-|Y_0|\right\|_{\mathcal{I}}\\
&\leq  C_\cI c_\cI c'_{\cJ,2}\cdot \max\left\{\left\|X_0\right\|_{\mathcal{I}}^{\frac12},\left\|Y_0\right\|_{\mathcal{I}}^{\frac12}\right\}\cdot \|X_0-Y_0\|_{\mathcal{I}}^{\frac12}\\
& \le 
 C_\cI c_\cI c'_{\cJ,2}\cdot  \max\left\{ \left ( 2C_\cI\left\|X\right\|_{\mathcal{I}}\right)^{\frac12}, \left(2 C_\cI \left\|Y\right\|_{\mathcal{I}}\right)^{\frac12}\right\}\cdot \left( 2C_{\mathcal{I}}\left\|X-Y\right\|_{\mathcal{I}} \right) ^{\frac12}
. 
\end{align*}
\end{proof} 

\begin{lem}\label{easy limit point lemma} Let $\Theta\in\mathbb{R}_+$ and $n\in\mathbb{N}.$ Let $\left(\mathcal{I},\left\|\cdot\right\|_{\mathcal{I}}\right)$ be a quasi-Banach ideal in $B(H).$ Let the sequence $\{u_j\}_{j\geq 1}\subset\mathcal{I}$ be disjointly supported   from the left (i.e., $u_k^* u_j=0$ for any $k\ne j$) and such that
\begin{align}\label{easy assumption Theta}
\left\|\sum_{j\in A}u_j\right\|_{\mathcal{I}}\leq \Theta|A|^{\frac12},\quad A\subset\mathbb{Z}_+,\quad |A|\geq n.
\end{align}
If $u$ is a limit point (with respect to the quasi-norm $\left\|\cdot\right\|_\mathcal{I}$) of the sequence $\{|u_j|\}_{j\geq1},$ then $\left\|u\right\|_{\mathcal{I}}\leq  C_{\mathcal{I}}\Theta.$ 
\end{lem}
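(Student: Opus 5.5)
The key identity is that left-disjointness makes the modulus of a sum "square-additive": since $u_k^*u_j=0$ for $k\ne j$,
$$\Big|\sum_{j\in A}u_j\Big|^2=\Big(\sum_{j\in A}u_j\Big)^*\Big(\sum_{j\in A}u_j\Big)=\sum_{j\in A}u_j^*u_j=\sum_{j\in A}|u_j|^2 .$$
Hence $\big\|\sum_{j\in A}u_j\big\|_{\mathcal I}=\big\|(\sum_{j\in A}|u_j|^2)^{1/2}\big\|_{\mathcal I}$, and the hypothesis \eqref{easy assumption Theta} becomes a square-function bound $\|(\sum_{j\in A}|u_j|^2)^{1/2}\|_{\mathcal I}\le\Theta|A|^{1/2}$ for $|A|\ge n$. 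The plan is to compare this square function with $|A|^{1/2}u$ when all $|u_j|$, $j\in A$, are close to $u$.

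\textbf{Approximation step.} Since $u$ is a quasi-norm limit point, for any $\varepsilon>0$ there are infinitely many indices $j$ with $\||u_j|-u\|_{\mathcal I}<\varepsilon$. Fix $N\ge n$ and choose such a set $A$ with $|A|=N$. Note $u\ge0$, being a quasi-norm limit of positive operators.

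\textbf{Transfer to the square function.} Let $D=\bigoplus_{j\in A}(|u_j|-u)$ act on $H\otimes\mathbb{C}^N$. The column operator $V=\sum_{j\in A}u_j\otimes e_{j,1}$ has $|V|=(\sum_{j\in A}|u_j|^2)^{1/2}$, and the column $W=u\otimes\sum_{j\in A}e_{j,1}$ has $|W|=N^{1/2}u$. Their difference is $\sum_{j\in A}(|u_j|-u)\otimes e_{j,1}$ if we first replace $u_j$ by $|u_j|$. This replacement is harmless: writing $u_j=v_j|u_j|$ with $v_j$ partial isometries having mutually orthogonal ranges (by left disjointness), one gets $|\sum_j u_j\otimes e_{j,1}|=|\sum_j|u_j|\otimes e_{j,1}|$, since both squares equal $\sum_j|u_j|^2$. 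Then, using the reverse triangle inequality in quasi-norm form, $\|\,|W|\,\|\le C_{\mathcal I}(\||V|\|+\|V-W\|)$, with $\|V-W\|_{\mathcal I}$ controlled by $\|D\|_{\mathcal I}$ because a column has the same singular values as the corresponding block-diagonal operator (compare $X^*X$). This yields
$$N^{1/2}\|u\|_{\mathcal I}\le C_{\mathcal I}\bigl(\Theta N^{1/2}+\|D\|_{\mathcal I}\bigr).$$

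\textbf{Main obstacle.} The difficulty is controlling $\|D\|_{\mathcal I}$ by $\varepsilon$ uniformly enough in $N$, since for a direct sum of $N$ blocks in a quasi-Banach ideal there is no a priori bound in terms of $N^{1/2}$. The remedy is to choose the $\varepsilon_j$ summably small, $\||u_j|-u\|_{\mathcal I}\le C_{\mathcal I}^{-j}\varepsilon$ for the chosen $j\in A$. Then \eqref{qt infinitely many summands} gives $\|D\|_{\mathcal I}\le\sum_j C_{\mathcal I}^j\varepsilon_j\le N\varepsilon$ or better, and a small enough $\varepsilon$ (depending on $N$) makes $\|D\|_{\mathcal I}\le N^{1/2}\delta$. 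Dividing by $N^{1/2}$ and letting $\delta\to0$ gives $\|u\|_{\mathcal I}\le C_{\mathcal I}\Theta$.
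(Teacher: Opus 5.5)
Your argument is the paper's argument. You compare the column $\sum_{j\in A}u\otimes E_{j,1}$, whose modulus is $|A|^{1/2}u\otimes E_{1,1}$, with $\sum_{j\in A}|u_j|\otimes E_{j,1}$, whose modulus is $|\sum_{j\in A}u_j|\otimes E_{1,1}$ by left-disjointness. One quasi-triangle inequality links them, and the error column is then made small.

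\textbf{A false step.} The claim that a column has the same singular values as the corresponding block-diagonal operator is wrong and should be dropped. For $X=\sum_j x_j\otimes E_{j,1}$ one has $X^*X=(\sum_j x_j^*x_j)\otimes E_{1,1}$, whereas $(\bigoplus_j x_j)^*(\bigoplus_j x_j)=\bigoplus_j x_j^*x_j$. For example, take $N$ copies of a rank-one projection. The column has the single singular value $N^{1/2}$, while the block-diagonal operator has $N$ singular values equal to $1$. So in $\mathcal{C}_p$ with $p>2$ the column is not controlled by your $D$.

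\textbf{The repair.} You never need $D$. Apply \eqref{qt many summands} directly to the error column:
\[
\Big\|\sum_{k=1}^{N}(|u_{j_k}|-u)\otimes E_{j_k,1}\Big\|_{\mathcal{I}}\le (2N)^{\log_2 C_{\mathcal{I}}}\sum_{k=1}^{N}\big\||u_{j_k}|-u\big\|_{\mathcal{I}}.
\]
This is exactly the paper's estimate.

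\textbf{Two smaller points.} First, your ``main obstacle'' is not one. $N$ is fixed while $\varepsilon\to0$, so any $N$-dependent constant in front of $\varepsilon$ is harmless. Second, you cannot in general demand $\||u_j|-u\|_{\mathcal{I}}\le C_{\mathcal{I}}^{-j}\varepsilon$ in the original indices, because being a limit point gives no rate. Either index by position within $A$, or do as the paper does. The paper passes to a subsequence with $\||u_j|-u\|_{\mathcal{I}}\le j^{-1}$; subsets of a subsequence still satisfy the hypothesis. It then takes $A=\{m,\dots,m+n-1\}$ and lets $m\to\infty$.

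With these corrections your proof is complete.
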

\begin{proof} Without loss of generality, 
we may assume that 
$\left\|u\right\|_{\mathcal{I}}\leq 1$ and $\left\|u_j\right\|_{\mathcal{I}}\leq 1$ for every $j\geq 1.$ Passing to a subsequence if needed, we may assume without loss of generality that $|u_j|\to u$ in $\mathcal{I}$ and that
\begin{equation}\label{elpl eq0}
\left\||u_j|-u\right\|_{\mathcal{I}}\leq j^{-1},\quad j\geq1.
\end{equation}

Observe that 
$$\left|\sum_{j\in A}u\otimes E_{j,1}\right|=|A|^{\frac12}u\otimes E_{1,1}.$$
Therefore, we  have
\begin{eqnarray*}
|A|^{\frac12}\left\|u\right\|_{\mathcal{I}}&=&\left\|\sum_{j\in A}u\otimes E_{j,1}\right\|_{\mathcal{I}}\\
&=&\left\|
\sum_{j\in A}|u_j|\otimes E_{j,1}+\sum_{j\in A}(u-|u_j|)\otimes E_{j,1}\right\|_{\mathcal{I}}\\
&\leq& 
C_{\mathcal{I}}\left(\left\|\sum_{j\in A}|u_j|\otimes E_{j,1}\right\|_{\mathcal{I}}+
\left\|\sum_{j\in A}(u-|u_j|)\otimes E_{j,1}\right\|_{\mathcal{I}}\right)\\
&\stackrel{\eqref{qt many summands}}{\leq}& C_{\mathcal{I}}\left(
\left\|\sum_{j\in A}|u_j|\otimes E_{j,1}\right\|_{\mathcal{I}}+(2|A|)^{\log_2(C_{\mathcal{I}})}\sum_{j\in A}
\left\|u-|u_j|\right\|_{\mathcal{I}}\right)\\
&\stackrel{\eqref{elpl eq0}}{\leq}& C_{\mathcal{I}}\left(\left\|\sum_{j\in A}|u_j|\otimes E_{j,1}\right\|_{\mathcal{I}}+(2|A|)^{\log_2(C_{\mathcal{I}})+1}\cdot\frac1{\min(A)}\right).
\end{eqnarray*}
Since the sequence $\left\{u_j\right\}_{j\geq 1}\subset\mathcal{I}$ is disjointly supported from   the left, it follows that 
\begin{align*}
\left| \sum_{j\in A}|u_j|\otimes E_{j,1}\right|&=\left(\left(  \sum_{j\in A}|u_j|\otimes E_{j,1} \right)^* \left(  \sum_{j\in A}|u_j|\otimes E_{j,1} \right) \right)^{\frac12}\\
&=\left(\sum_{j\in A}|u_j|^2\right)^{\frac12}\otimes E_{1,1}\\
&=\left(\left(  \sum_{j\in A}u_j\right) ^* \left(  \sum_{j\in A}u_j\right) \right)^{\frac12}\otimes E_{1,1}\\
&=\left(\left|\sum_{j\in A}u_j\right|^2 \right)^{\frac12}\otimes E_{1,1}=
\left|\sum_{j\in A}u_j\right|\otimes E_{1,1}.
\end{align*}
Thus, we have 
\begin{eqnarray*}
|A|^{\frac12}\left\|u\right\|_{\mathcal{I}}
&\leq&
 C_{\mathcal{I}}\left(
 \left\|\sum_{j\in A}u_j\right\|_{\mathcal{I}}+(2|A|)^{\log_2(C_{\mathcal{I}})+1}\cdot\frac1{\min(A)}\right)\\
&
\stackrel{\eqref{easy assumption Theta}}{\leq}& C_{\mathcal{I}}\left(\Theta |A|^{\frac12}+(2|A|)^{\log_2(C_{\mathcal{I}})+1}\cdot\frac1{\min(A)}\right).
\end{eqnarray*}  
Taking $A=\{m,m+1,\cdots,m+n-1\}$ and dividing the above inequality by $|A|^{\frac12},$ we obtain
$$\|u\|_{\mathcal{I}}\leq C_{\mathcal{I}}\Big(\Theta+ (2n)^{\log_2(C_{\mathcal{I}})+1}\cdot \frac1{mn^{\frac12}}\Big),\quad m,n\in\mathbb{N}.$$
Since $m\in\mathbb{N}$ can be chosen arbitrarily large, the assertion follows.
\end{proof}

\begin{proof}[Proof of Theorem \ref{limit point theorem}] 
Without loss of generality, 
passing to a subsequence if needed, we may assume   that $x_j^{\ast}x_j\to x$ in the weak operator topology as $j\to\infty.$

Fix a finite rank  projection $p\in B(H)$ of the form ${\rm Diag(1,\cdots,1,0\cdots)}$ and denote for brevity $u=|x^{\frac12}p|.$ We have $px_j^{\ast}x_jp\to px^2p$ in the weak operator topology as $j\to \infty $. 
For matrices of a fixed size, convergence in the weak operator topology yields the convergence with respect to 
any quasi-Banach ideal in $B(H)$ (in particular, with respect to 
 $\mathcal{I}^2$).
 Hence, $px_j^{\ast}x_jp\to pxp$ as $j\to \infty $ in $\mathcal{I}^2$ of $\cI$.
 In other words, $|x_jp|^2\to u^2$ in $\mathcal{I}^2$ as $j\to \infty$. By Lemma \ref{bks lemma}, $|x_jp|\to u$ in $\mathcal{I}$  as $j\to \infty $. 

If the sequence $\{w_j\}_{j\geq1}\subset\mathcal{K}(H)$ is such that $w_j^{\ast}\to0$ in strong operator topology, then, passing to a subsequence if needed, we write $w_j=u_j+v_j,$ where the sequence $\{u_j\}_{j\geq1}$ is disjointly supported from  the left and  $v_j\to0$ in the uniform norm as $j\to\infty.$

Fix $\epsilon\in(0,1).$ Set $w_j=x_jp$ for $j\geq1.$ Since $p$ is a finite rank  projection and since $x_j\to0$ in the weak operator topology, it follows that $w_j^{\ast}\to0$ in strong operator topology. By the preceding paragraph, we write
$x_jp=u_j+v_j,$ where the sequence $\left\{u_j\right\}_{j\geq1}$ is disjointly supported from  the left and 
$v_j\to0$ in the uniform norm as $j\to\infty.$ Without loss of generality,
we may write 
 $u_j=u_jp$ and $v_j=v_jp$ for $j\geq1.$ Hence, $\left\|v_j\right\|_{\mathcal{I}}\leq\left\|p\right\|_{\mathcal{I}}\left\|v_j\right\|_{\infty}\to0$ as $j\to\infty.$ Passing to a subsequence again, we may assume that
$$x_jp=u_j+v_j,$$ 
where the sequence $\{u_j\}_{j\geq1}$ is disjoint from  the left and where $\left\|v_j\right\|_{\mathcal{I}}\leq (2C_{\mathcal{I}})^{-j}\epsilon$ for every $j\geq 1.$ We have
\begin{eqnarray*}
\left\|\sum_{j\in A}u_j\right\|_{\mathcal{I}}=\left\|\sum_{j\in A}x_jp-\sum_{j\in A}v_j\right\|_{\mathcal{I}}&\stackrel{\eqref{qt infinitely many summands}}{\leq}& C_{\mathcal{I}}\|\sum_{j\in A}x_jp\|_{\mathcal{I}}+C_{\mathcal{I}}\sum_{j\in A}C_{\mathcal{I}}^j\left\|v_j\right\|_{\mathcal{I}}\\
&\leq& C_{\mathcal{I}}\Theta |A|^{\frac12}+\sum_{j\in A}2^{-j}\epsilon\\
&\leq&  \Theta C_{\mathcal{I}}|A|^{\frac12}+\epsilon\leq (C_{\mathcal{I}}\Theta+\epsilon)|A|^{\frac12}
\end{eqnarray*}
   for all $  A\subset\mathbb{Z}_+$ with $ |A|\geq n.$

Since $x_jp-u_j\to0$ in $\mathcal{I}$ as $j\to\infty,$ it follows from Lemma \ref{abs holder estimate} that $|x_jp|-|u_j|\to0$ in $\mathcal{I}$ as $j\to\infty.$ Since also $|x_jp|\to u$ in $\mathcal{I}$ as $j\to\infty,$ it follows that $|u_j|\to u$ in $\mathcal{I}$ as $j\to\infty.$ Applying Lemma \ref{easy limit point lemma} to the sequence $\{u_j\}_{j\geq 1},$ we obtain
$$\left\|u\right\|_{\mathcal{I}}\leq C_{\mathcal{I}}(C_{\mathcal{I}}\Theta+\epsilon).$$
Since $\epsilon$ is arbitrarily small, it follows that $\left\|u\right\|_{\mathcal{I}}\leq C_{\mathcal{I}}^2\Theta.$ Recall that $u=|x^{\frac12}p|.$ Thus,
$$\left\|x^{\frac12}p\right\|_{\mathcal{I}}\leq C_{\mathcal{I}}^2\Theta$$
for an   arbitrary finite rank   projection $p$  of the form ${\rm Diag(1,\cdots,1,0\cdots)}$ 
 in $B(H)$. The assertion follows now from the Fatou property of $\cI$ 
(see page \pageref{Fatou'}).
\end{proof}

\section{A decomposition theorem of sequences in quasi-Banach ideals}

The following decomposition theorem is the main result of this section.
Such a decomposition can be traced back to   \cite{AL} and \cite{Arazy81}.
In fact, in the setting of Banach operator ideals in $B(H)$, Arazy's decomposition theorem \cite[Corollary 2.8]{Arazy81}  is a direct consequence of the following theorem,
%which is useful in the study of the structure of operator ideals in $B(H)$. It should be compared with \cite[Corollary 2.8]{Arazy81}, which was established in the Banach setting (note that Arazy's result holds for sequences tending to $0$ in the weak operator topology \cite[p.307]{Arazy81}).
%Below, 
while our proof is more transparent and straightforward than that in \cite{Arazy81}. 
\begin{thm}\label{fedor decomposition theorem} Let $\left(\mathcal{I}, \left\|\cdot\right\|_{\mathcal{I}}\right)$ be a separable quasi-Banach ideal in $B(H)$ possessing the Fatou property. Let $\{x_j\}_{j\geq1}\subset \mathcal{I}$ be a bounded sequence such that $x_j\to0$ in the  weak operator topology. There exist
\begin{enumerate}[{\rm (i)}]
\item a limit point $y$ of the sequence $\{x_j^{\ast}x_j\}_{j\geq1}$ in the weak operator topology;
\item a limit point $z$ of the sequence $\{x_jx_j^{\ast}\}_{j\geq1}$ in the weak operator topology;
\item a strictly increasing $\mathbb{N}$-valued sequence $\{j_m\}_{m\geq1};$
\item a sequence $\{p_m\}_{m\geq1}$ of pairwise orthogonal   (finite-dimensional) projections  of the form ${\rm Diag(\cdots, 0,1\cdots,1,0\cdots)}$ in $B(H)$;
\item a decomposition
$$x_{j_m}=a_m+b_m+c_m+d_m,\quad m\geq1$$
\end{enumerate}
such that
\begin{enumerate}[{\rm (a)}]
\item\label{deca} $a_m=p_mx_{j_m}p_m$ for every $m\geq1;$
\item\label{decb} $b_m=p_mb_m$ and $c_m=c_mp_m$ for every $m\geq1;$
\item\label{decc} $|b_m|\to y^{\frac12},$ $|c_m^{\ast}|\to z^{\frac12}$ and $d_m\to0$ in $\mathcal{I}$ as $m\to\infty.$
\end{enumerate}
\end{thm}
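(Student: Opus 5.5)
We will build the decomposition by a gliding-hump argument, using block projections $P_N={\rm Diag}(1,\dots,1,0,\dots)$ (rank $N$). First pass to a subsequence so that $x_j^*x_j\to y$ and $x_jx_j^*\to z$ in the weak operator topology; this is possible because both sequences are bounded in $B(H)$, and on the unit ball the weak operator topology is metrizable. For a finite-rank $P_N$, we have $P_Nx_j^*x_jP_N\to P_NyP_N$ in norm, since the matrices have fixed finite size. Exactly as in the proof of Theorem \ref{limit point theorem}, this gives $|x_jP_N|^2\to |y^{1/2}P_N|^2$ in $\mathcal I^2$, and then, by Lemma \ref{bks lemma}, $|x_jP_N|\to |y^{1/2}P_N|$ in $\mathcal{I}$. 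The same argument applied to adjoints gives $|P_Nx_j^*|\to|z^{1/2}P_N|$. Separately, the Fatou property, together with separability (which gives order continuity), shows that $|y^{1/2}P_N|\to y^{1/2}$ in $\mathcal{I}$ as $N\to\infty$. Indeed, $|y^{1/2}P_N|^2=P_NyP_N$, and Lemma \ref{abs holder estimate} applied to $y^{1/2}P_N\to y^{1/2}$ does the job; this uses $y^{1/2}\in\mathcal{I}$, which is part of what we must guarantee, and the Fatou property gives it via the bound on $\|x_j\|$.

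Next we choose integers $N_0<N_1<\dots$ and indices $j_1<j_2<\dots$ inductively, and set $p_m=P_{N_m}-P_{N_{m-1}}$. Each $x_j$ is compact, since $\mathcal{I}$ is separable, so $\|x_j-P_Nx_jP_N\|_{\mathcal I}\to0$ as $N\to\infty$ for fixed $j$. Since $x_j\to0$ in the weak operator topology and $P_{N}$ has finite rank, we get $\|P_{N}x_j\|_{\mathcal I}\to0$ and $\|x_jP_N\|_{\mathcal I}\to0$ as $j\to\infty$, because finite-rank compressions converge in every ideal quasi-norm. At step $m$ we proceed as follows:
\begin{enumerate}
\item choose $j_m$ so large that $\|P_{N_{m-1}}x_{j_m}P_{N_{m-1}}\|_{\mathcal I}<2^{-m}$;
\item also require that $|x_{j_m}P_{N_{m-1}}|$ is within $2^{-m}$ of $|y^{1/2}P_{N_{m-1}}|$, and that $|P_{N_{m-1}}x_{j_m}^*|$ is within $2^{-m}$ of $|z^{1/2}P_{N_{m-1}}|$;
\item then choose $N_m$ so large that $\|x_{j_m}-P_{N_m}x_{j_m}P_{N_m}\|_{\mathcal I}<2^{-m}$.
\end{enumerate}
Writing $Q=P_{N_{m-1}}$ and $x=x_{j_m}$, we decompose
$$P_{N_m}xP_{N_m}=p_mxp_m+p_mxQ+Qxp_m+QxQ.$$
We set $a_m=p_mxp_m$, $b_m=p_mxQ$, $c_m=Qxp_m$, and let $d_m$ be everything else, namely $QxQ+(x-P_{N_m}xP_{N_m})$. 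Then $d_m\to0$ in $\mathcal{I}$, and the structural conditions (a) and (b) hold by construction.

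It remains to show $|b_m|\to y^{1/2}$. First, $|b_m|^2=Qx^*p_mxQ$, whereas $|xQ|^2=Qx^*xQ$. The difference is $Qx^*(1-p_m)xQ$, which equals $Qx^*QxQ+Qx^*(1-P_{N_m})xQ$. Both pieces are small in $\mathcal{I}^2$, that is, the corresponding square roots are small in $\mathcal{I}$: the first is controlled by $\|QxQ\|_{\mathcal{I}}$ and the second by $\|(1-P_{N_m})xQ\|$, which we make small in step 3. Applying Lemma \ref{abs holder estimate} to $b_m$ versus $xQ$ directly, with $xQ-b_m=QxQ+(1-P_{N_m})xQ$ small in $\mathcal{I}$ and both operators bounded in $\mathcal I$, gives $\||b_m|-|xQ|\|_{\mathcal I}\to0$. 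Combining this with step 2 and the convergence $|y^{1/2}P_N|\to y^{1/2}$, we obtain $|b_m|\to y^{1/2}$. The argument for $|c_m^*|\to z^{1/2}$ is symmetric.

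The main obstacle is ensuring that $y^{1/2}\in\mathcal I$ and that $|y^{1/2}P_N|\to y^{1/2}$ in quasi-norm rather than merely weakly. The plan here is to use the Fatou property, which bounds $\|y^{1/2}P_N\|_{\mathcal{I}}$ uniformly by $\liminf_j\|x_jP_N\|_{\mathcal I}$, and then the separability of $\mathcal I$, which gives order continuity of the quasi-norm on the monotone family $P_NyP_N\uparrow$ through $y^{1/2}$. The delicate point is that, in a separable quasi-Banach ideal, the Fatou property together with separability should yield $y^{1/2}\in\mathcal I$ and norm convergence. If the monotone argument fails, we would instead apply Lemma \ref{abs holder estimate} to $y^{1/2}P_N\to y^{1/2}$ after first establishing this convergence via the standard fact that $\|(1-P_N)a\|_{\mathcal I}\to0$ for every $a$ in a separable ideal.
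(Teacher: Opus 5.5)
Your proposal is correct and is essentially the paper's proof: the same block projections $p_m$, the same pieces $a_m=p_mxp_m$, $b_m=p_mxQ$, $c_m=Qxp_m$ with $d_m$ the remainder, and the same tools. These are Lemma~\ref{bks lemma} on finite corners to get $|x_jP_N|\to|y^{\frac12}P_N|$, the Fatou property for $y^{\frac12}\in\mathcal{I}$, separability (density of finite-rank operators, not mere compactness) for $\|y^{\frac12}(1-P_N)\|_{\mathcal{I}}\to0$ and for the tails of each $x_{j_m}$, and Lemma~\ref{abs holder estimate} to pass from closeness of operators to closeness of their moduli. Fix three slips, none of which your argument relies on: your aside that $\|x_jP_N\|_{\mathcal{I}},\|P_Nx_j\|_{\mathcal{I}}\to0$ is false (weak operator convergence only gives $P_Nx_jP_N\to0$, and your claim would force $y=z=0$); in step 2 you need $|x_{j_m}^{\ast}P_{N_{m-1}}|$ rather than $|P_{N_{m-1}}x_{j_m}^{\ast}|$; and $P_NyP_N$ is not monotone in $N$, so the Fatou argument should use $y^{\frac12}P_Ny^{\frac12}\uparrow y$, which has the same singular values.
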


The positions of $a_m,b_m,c_m$ in the above theorem are demonstrated as follows. 
\begin{center}
\begin{tikzpicture}
    \draw[fill=brown] (0,0) rectangle (2,1);
    \node at (1, 0.5) {$c_1$};
    \draw[fill=brown] (2,1) rectangle (5,-2);
    \node at (3.5, 0.5) {$c_2$};
    \draw[fill=brown] (5,1) rectangle (8,-5);
    \node at (6, 0.5) {$c_3$};
    \draw[fill=brown] (7,1) rectangle (10,-7);
    \node at (8.5, 0.5) {$c_4$};
\node at (10.5, -3.5) {$\cdots$};

 \draw[fill=pink] (-1,0) rectangle (0,-2);
    \node at (-0.5, -1) {$b_1$};
 \draw[fill=pink] (-1,-2) rectangle (2,-5);
    \node at (-0.5, -3.5) {$b_2$};
 \draw[fill=pink] (-1,-5) rectangle (5,-7);
    \node at (-0.5, -6) {$b_3$};
 \draw[fill=pink] (-1,-7) rectangle (7,-10);
    \node at (-0.5, -8.5) {$b_4$};
\node at (3.5, -10.5 ) {$\vdots$};

\draw[fill=yellow] (0,0) rectangle (2,-2);
    \node at (1, -1) {$a_1$};
\draw[fill=yellow] (2,-2) rectangle (5,-5);
    \node at (3.5, -3.5) {$a_2$};
\draw[fill=yellow] (5,-5) rectangle (7,-7 );
    \node at (6, -6) {$a_3$};
\draw[fill=yellow] (7,-7) rectangle (10,-10);
    \node at (8.5, -8.5) {$a_4$};

\node at (10.5, -10.5 ) {$\ddots$};

\end{tikzpicture}
\end{center}

\begin{rem}\label{fedor decomposition remark} Passing to a further subsequence in Theorem \ref{fedor decomposition theorem},  we may choose $\epsilon>0$ and assume that
\begin{align}\label{bmcmdmyz12}\begin{split}
&\quad \left\||b_m|-y^{\frac12}\right\|_{\mathcal{I}},\left\||c_m^{\ast}|-z^{\frac12}\right\|_{\mathcal{I}},\left\|d_m\right\|_{\mathcal{I}}\\
&\leq (2C_{\mathcal{I}})^{-m}\max\left\{\left\|y^{\frac12}\right\|_{\mathcal{I}},\left\|z^{\frac12}\right\|_{\mathcal{I}},\epsilon\right\},\quad m\geq 1. 
\end{split}
\end{align}

\end{rem}

Denote
$$P_n=\sum_{k=0}^{n-1}E_{k,k},\quad n\in\mathbb{Z}_+.$$

\begin{proof}[Proof of Theorem \ref{fedor decomposition theorem}]
 Without loss of generality, 
 we may assume that $$\left\|x_j\right\|_{\mathcal{I}}\leq 1$$ for every $j\geq 1.$ In other words, $\left\|x_j^{\ast}x_j\right\|_{\mathcal{I}^2},~\left\|x_jx_j^{\ast}\right\|_{\mathcal{I}^2}\leq 1$ for every $j\geq1.$ In particular, we have $$\left\|x_j^{\ast}x_j\right\|_{\infty},~\left\|x_jx_j^{\ast}\right\|_{\infty}\leq 1$$ for every $j\geq1.$ 
 Recall that the unit ball in $B(H)$ is compact in the weak operator topology\cite[Theorem 5.1.3]{KR}. Hence, the sequences $\left\{x_j^{\ast}x_j\right\}_{j\geq1}$ and $\left\{x_jx_j^{\ast}\right\}_{j\geq1}$ are compact in  the weak operator topology. 
There exists a subnet $\{j_i\}_i$ of the positive integers such that   $x_{j_i}^{\ast}x_{j_i}\to_i y$ and $x_{j_i}x_{j_i}^{\ast}\to_i z$ in the 
 weak operator topology.
 Since $\left(\mathcal{I},\left\|\cdot\right\|_{\mathcal{I}}\right)$ possesses the Fatou property, it follows that so does $\left(\mathcal{I}^2,\left\|\cdot\right\|_{\mathcal{I}^2}\right)$\footnote{
Here, 
 $\mathcal{I}^2=\{x\in B(H): |x|^{1/2} \in \mathcal{I}\}, ~\norm{x}_{\mathcal{I}^2 } =\norm{|x|^{1/2}}_{\mathcal{I}}^2$, 
 $I$ stands for the sequence space corresponding to $\cI$ and
 $I^2=\{x\in B(H): |x|^{1/2} \in I\}, ~\norm{x}_{I^2 } =\norm{|x|^{1/2}}_{I}^2 $. 
 The Fatou property of $\mathcal{I}^2$ follows from the fact that the sequence space  $I^2$  has the Fatou property, see e.g.  the argument in \cite[Theorem 6.1.7(iii)]{DPS}.}. By the Fatou property, we have
 $\left\|y\right\|_{\mathcal{I}^2},\left\|z\right\|_{\mathcal{I}^2}\leq1.$ In other words, 
 $$y^{\frac12},z^{\frac12}\in\mathcal{I} \mbox{~and~} \left\|y^{\frac12}\right\|_{\mathcal{I}},~\left\|z^{\frac12}\right\|_{\mathcal{I}}\leq 1.$$

Set $n_0=0$ and fix $n_1>1$ such  that (the constant $c''_\cI$ is given in Lemma \ref{abs holder estimate})
$$\left\|y^{\frac12}(1-P_{n_1})\right\|_{\mathcal{I}}\leq (c_{\mathcal{I}}'')^{-2}\cdot 1^{-2},\quad \left\|z^{\frac12}(1-P_{n_1})\right\|_{\mathcal{I}}\leq (c_{\mathcal{I}}'')^{-2}\cdot 1^{-2}.$$
Lemma \ref{abs holder estimate} yields
$$\left\|y^{\frac12}-|y^{\frac12}P_{n_1}|\right\|_{\mathcal{I}},~\left\|z^{\frac12}-|z^{\frac12}P_{n_1}|\right\|_{\mathcal{I}}\leq 1^{-1}.$$
Note that
$$P_{n_1}x_{j_i}^{\ast}x_{j_i} P_{n_1}\to_i P_{n_1}yP_{n_1},\quad P_{n_1}x_{j_i}x_{j_i}^{\ast}P_{n_1}\to_i P_{n_1}zP_{n_1},\quad P_{n_1}x_{j_i}P_{n_1}\to_i 0$$ 
in the  weak operator topology. For  finite-dimensional spaces  $P_{n_1} \cI P_{n_1}$ and  $P_{n_1} \cI^2  P_{n_1}$, convergence in the weak operator topology 
yields the convergence in $\cI$ and $\mathcal{I}^2.$ 
Hence,  we have 
$$P_{n_1}x_{j_i}^{\ast}x_{j_i} P_{n_1}\stackrel{\mathcal{I}^2}{\to}_i P_{n_1}yP_{n_1},\quad P_{n_1}x_{j_i}x_{j_i}^{\ast}P_{n_1}\stackrel{\mathcal{I}^2}{\to}_i P_{n_1}zP_{n_1},\quad P_{n_1}x_{j_i} P_{n_1}\stackrel{\mathcal{I}}{\to}_i 0.$$ 
In other words,
$$|x_{j_i} P_{n_1}|^2\stackrel{\mathcal{I}^2}{\to}_i |y^{\frac12}P_{n_1}|^2,\quad |x_{j_i}^{\ast}P_{n_1}|^2\stackrel{\mathcal{I}^2}{\to}_i |z^{\frac12}P_{n_1}|^2,\quad P_{n_1}x_{j_i}P_{n_1}\stackrel{\mathcal{I}}{\to}_i0. $$  
By Lemma \ref{bks lemma}, we have\footnote{Indeed,   we have $\norm{\left|x_jP_{n1}\right| -\left|y^{\frac12}P_{n_1}\right|}_\cI \stackrel{L.\ref{bks lemma}}{\le} c'_{\cI,2} \norm{ \left| |x_jP_{n_1}|^2 - |y^{\frac12}P_{n_1}|^2\right|^{1/2}}_\cI = c'_{\cI,2} \norm{   |x_jP_{n_1}|^2 - |y^{\frac12}P_{n_1}  |^2 }_{\cI^2 }^{1/2}  $.}  
$$|x_{j_i}P_{n_1}|\stackrel{\mathcal{I}}{\to}_i|y^{\frac12}P_{n_1}|,\quad |x_{j_i}^{\ast}P_{n_1}|\stackrel{\mathcal{I}}{\to}_i|z^{\frac12}P_{n_1}|,\quad P_{n_1}x_{j_i}P_{n_1}\stackrel{\mathcal{I}}{\to}_i 0.$$ 
Fix $j_1$ such that
$$\Big\||x_{j_1}P_{n_1}|-|y^{\frac12}P_{n_1}|\Big\|_{\mathcal{I}},~\Big\||x_{j_1}^{\ast}P_{n_1}|-|z^{\frac12}P_{n_1}\Big\|_{\mathcal{I}},~\Big\|P_{n_1}x_{j_1}P_{n_1}\Big\|_{\mathcal{I}}\leq 1^{-1}.$$
Fix $n_2>n_1$ such that 
$$\left\|(1-P_{n_2})x_{j_1}\right\|_{\mathcal{I}}~,\left\|(1-P_{n_2})x_{j_1}^{\ast}\right\|_{\mathcal{I}}\leq 1^{-1},$$
$$\left\|y^{\frac12}(1-P_{n_2})\right\|_{\mathcal{I}},~\left\|z^{\frac12}(1-P_{n_2})\right\|_{\mathcal{I}}\leq (c_{\mathcal{I}}'')^{-2}\cdot 2^{-2}.$$
By
Lemma \ref{abs holder estimate}, we have
$$\left\|y^{\frac12}-|P_{n_2}y^{\frac12}|\right\|_{\mathcal{I}},~\left\|z^{\frac12}-|P_{n_2}z^{\frac12}|\right\|_{\mathcal{I}}\leq 2^{-1}.$$
Arguing as above, we have 
$$P_{n_2}x_{j_i}^{\ast}x_{j_i} P_{n_2}\to_i P_{n_2}yP_{n_2},\quad P_{n_2}x_{j_i}x_{j_i}^{\ast}P_{n_2}\to_i P_{n_2}zP_{n_2},\quad P_{n_2}x_{j_i}P_{n_2}\to_i 0$$ 
in the weak operator topology;
% For finite-dimensional spaces $P_{n_2}\cI P_{n_2} $ and $P_{n_2}\cI^2 P_{n_2} $, convergence in the  weak operator topology yields the convergence in $\cI$ and  $\mathcal{I}^2.$ Hence, 
% $$P_{n_2}x_{j_i}^{\ast}x_{j_i}P_{n_2}\stackrel{\mathcal{I}^2}{\to}_i P_{n_2}yP_{n_2},\quad P_{n_2}x_{j_i}x_{j_i}^{\ast}P_{n_2}\stackrel{\mathcal{I}^2}{\to}_i P_{n_2}zP_{n_2},\quad P_{n_2}x_{j_i}P_{n_2}\stackrel{\mathcal{I}}{\to}_i 0,$$ 
% i.e., 
% $$|x_{j_i}P_{n_2}|^2\stackrel{\mathcal{I}^2}{\to}_i |y^{\frac12}P_{n_2}|^2,\quad |x_{j_i}^{\ast}P_{n_2}|^2\stackrel{\mathcal{I}^2}{\to}_i |z^{\frac12}P_{n_2}|^2,\quad P_{n_2}x_{j_i} P_{n_2}\stackrel{\mathcal{I}}{\to}_i0;$$
% By Lemma \ref{bks lemma}, we have
$$|x_{j_i} P_{n_2}|\stackrel{\mathcal{I}}{\to}_i|y^{\frac12}P_{n_2}|,\quad |x_{j_i}^{\ast}P_{n_2}|\stackrel{\mathcal{I}}{\to}_i|z^{\frac12}P_{n_2}|,\quad P_{n_2}x_{j_i} P_{n_2}\stackrel{\mathcal{I}}{\to}_i 0.$$ 
  Fix $j_2>j_1$ such that
$$\Big\||x_{j_2}P_{n_2}|-|y^{\frac12}P_{n_2}|\Big\|_{\mathcal{I}},\Big\||x_{j_2}^{\ast}P_{n_2}|-|z^{\frac12}P_{n_2}|\Big\|_{\mathcal{I}},\Big\|P_{n_2}x_{j_2}P_{n_2}\Big\|_{\mathcal{I}}\leq 2^{-1}.$$
Fix $n_3>n_2$ such that 
$$\|(1-P_{n_3})x_{j_2}\|_{\mathcal{I}},~\|(1-P_{n_3})x_{j_2}^{\ast}\|_{\mathcal{I}}\leq 2^{-1},$$
$$\left\|y^{\frac12}(1-P_{n_3})\right\|_{\mathcal{I}},~\left\|z^{\frac12}(1-P_{n_3})\right\|_{\mathcal{I}}\leq (c_{\mathcal{I}}'')^{-2}\cdot 3^{-2}.$$

Arguing inductively, we obtain a strictly increasing $\mathbb{N}$-valued sequence $\left\{j_m\right\}_{m\geq 1}$ and a strictly increasing $\mathbb{Z}_+$-valued sequence $\left\{n_m \right\}_{m\geq 0}$ such that $n_0=0$ and
$$\left\|y^{\frac12}-|y^{\frac12}P_{n_m}|\right\|_{\mathcal{I}},~\left\|z^{\frac12}-|z^{\frac12}P_{n_m}|\right\|_{\mathcal{I}}\leq m^{-1},$$
$$\Big\||x_{j_m}P_{n_m}|-|y^{\frac12}P_{n_m}|\Big\|_{\mathcal{I}},~\Big\||x_{j_m}^{\ast}P_{n_1}|-|z^{\frac12}P_{n_m}|\Big\|_{\mathcal{I}},~\Big\|P_{n_m}x_{j_m}P_{n_m}\Big\|_{\mathcal{I}}\leq m^{-1},$$
$$\left\|(1-P_{n_{m+1}})x_{j_m}\right\|_{\mathcal{I}},~\left\|(1-P_{n_{m+1}})x_{j_m}^{\ast}\right\|_{\mathcal{I}}\leq m^{-1},$$
for every $m\geq1.$ This, in particular, yields,
\begin{equation}\label{fed eq0}
|x_{j_m}P_{n_m}|-y^{\frac12},~|x_{j_m}^{\ast}P_{n_m}|-z^{\frac12},~P_{n_m}x_{j_m}P_{n_m}\stackrel{\mathcal{I}}{\to}0,\quad m\to\infty,
\end{equation}
and
\begin{equation}\label{fed eq1}
(1-P_{n_{m+1}})x_{j_m},~x_{j_m}(1-P_{n_{m+1}})\stackrel{\mathcal{I}}{\to}0,\quad m\to\infty.
\end{equation}

Set $p_m=P_{n_{m+1}}-P_{n_m},$ $m\geq1,$ and 
$$a_m=p_mx_{j_m}p_m,~ b_m=p_mx_{j_m}P_{n_m},~ c_m=P_{n_m}x_{j_m}p_m,~ d_m = x_{j_m}-a_m-b_m-c_m,~ m\geq 1.$$
Properties \eqref{deca} and \eqref{decb} of the decomposition follow from the definition of $a_m$ and $b_m,c_m$. It remains to establish the convergence in \eqref{decc}.

We denote, for brevity
$$a_m'=(1-P_{n_m})x_{j_m}(1-P_{n_m}),\quad b_m'=x_{j_m}P_{n_m},\quad c_m'=P_{n_m}x_{j_m}.$$
Note that 
\begin{align*}
a_m&=a_m'+(1-P_{n_{m+1}})x_{j_m}(1-P_{n_{m+1}})\\
&\qquad -
(1-P_{n_{m+1}})x_{j_m}\cdot (1-P_{n_m})-(1-P_{n_m})\cdot x_{j_m}(1-P_{n_{m+1}}), 
\end{align*} 
$$b_m=b_m'-P_{n_m}x_{j_m}P_{n_m}-(1-P_{n_{m+1}})x_{j_m}\cdot P_{n_m}$$
and 
$$c_m=c_m'-P_{n_m}x_{j_m}P_{n_m}-P_{n_m}\cdot x_{j_m}(1-P_{n_{m+1}}).$$
By \eqref{fed eq0} and \eqref{fed eq1}, we have 
\begin{equation}\label{fed eq2}
a_m'-a_m,b_m'-b_m,c_m'-c_m\stackrel{\mathcal{I}}{\to}0,\quad m\to\infty. 
\end{equation}
Moreover,
$$a_m'+b_m'+c_m'=x_{j_m}+P_{n_m}x_{j_m}P_{n_m}.$$ 
By \eqref{fed eq0},
 we have 
 $a_m'+b_m'+c_m'-x_{j_m}\to0$ in $\mathcal{I}$ as $m\to\infty.$ By \eqref{fed eq2}, $a_m+b_m+c_m-x_{j_m}\to0$ in $\mathcal{I}$ as $m\to\infty.$ In other words, $d_m\to0$ in $\mathcal{I}$ as $m\to\infty.$

By \eqref{fed eq0}, we have $|b_m'|-y^{\frac12},|(c_m')^{\ast}|-z^{\frac12}\to0$ in $\mathcal{I}$ as $m\to\infty.$ By \eqref{fed eq2} and Lemma \ref{abs holder estimate}, $|b_m|-|b_m'|,|c_m^{\ast}|-|(c_m')^{\ast}|\to0$ in $\mathcal{I}$ as $m\to\infty.$ Thus, $|b_m|\to y^{\frac12}$ and $|c_m^{\ast}|\to z^{\frac12}$ in $\mathcal{I}$ as $m\to\infty.$ This completes the proof.
\end{proof} 

We end this section with the following two useful estimates (see their usages in the   proofs of Lemmas \ref{fs lemma}, \ref{general new lemma}  respectively). 

\begin{lem}\label{main corollary} 
Let $\left(\mathcal{I},\left\|\cdot\right\|_{\mathcal{I}}\right)$ be a separable quasi-Banach ideal in $B(H)$ possessing the Fatou property. 
Let $a_m,b_m,c_m, d_m,y,z$ be as in   Theorem \ref{fedor decomposition theorem} (taking the rate of convergence specified in Remark \ref{fedor decomposition remark} into account). 
For any ${\bm \alpha} =({\bm \alpha}(m))_{m\ge 1}\in \ell_2$, we 
  have
\begin{align*}
&~\quad \left\|\sum_{m\geq1}{\bm \alpha}(m)b_m\right\|_{\mathcal{I}},~\left\|\sum_{m\geq1}{\bm \alpha}(m)c_m\right\|_{\mathcal{I}},~\left\|\sum_{m\geq1}{\bm \alpha}(m)d_m\right\|_{\mathcal{I}}\\
&\leq 2 C_{\mathcal{I}} \left\|{\bm \alpha}\right\|_2\max\left\{\left\|y^{\frac12}\right\|_{\mathcal{I}},\left\|z^{\frac12}\right\|_{\mathcal{I}},\epsilon\right\} .
\end{align*}
\end{lem}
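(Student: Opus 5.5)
We treat the three sums separately; in each case the target bound is $2C_{\mathcal{I}}\|{\bm \alpha}\|_2 M$, where $M:=\max\{\|y^{\frac12}\|_{\mathcal{I}},\|z^{\frac12}\|_{\mathcal{I}},\epsilon\}$.

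\textbf{The sum of the $d_m$.} This is the easiest case and uses only the rate in Remark \ref{fedor decomposition remark}. Since $|{\bm \alpha}(m)|\le\|{\bm \alpha}\|_2$, the estimate \eqref{qt infinitely many summands} together with \eqref{bmcmdmyz12} gives
$$\Big\|\sum_m{\bm \alpha}(m)d_m\Big\|_{\mathcal{I}}\le\sum_{m\ge1}C_{\mathcal{I}}^m|{\bm \alpha}(m)|(2C_{\mathcal{I}})^{-m}M\le\|{\bm \alpha}\|_2M\sum_{m}2^{-m}=\|{\bm \alpha}\|_2M.$$

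\textbf{The sum of the $b_m$.} By (b) of Theorem \ref{fedor decomposition theorem}, $b_m=p_mb_m$, and the $p_m$ are pairwise orthogonal. Hence $b_k^*b_m=b_k^*p_kp_mb_m=0$ for $k\ne m$, so the family $\{{\bm \alpha}(m)b_m\}$ is disjointly supported from the left. It follows that
$$\Big|\sum_m{\bm \alpha}(m)b_m\Big|=\Big(\sum_m|{\bm \alpha}(m)|^2|b_m|^2\Big)^{\frac12}=\Big|\sum_m{\bm \alpha}(m)|b_m|\otimes E_{m,1}\Big|,$$
which is exactly the identity used in the proof of Lemma \ref{easy limit point lemma}. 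Now split $|b_m|=y^{\frac12}+(|b_m|-y^{\frac12})$.
\begin{itemize}
\item[] \emph{Main term.} Here $\big|\sum_m{\bm \alpha}(m)y^{\frac12}\otimes E_{m,1}\big|=\|{\bm \alpha}\|_2\,y^{\frac12}\otimes E_{1,1}$, whose norm is $\|{\bm \alpha}\|_2\|y^{\frac12}\|_{\mathcal{I}}$.
\item[] \emph{Error term.} Apply \eqref{qt infinitely many summands} to $\sum_m{\bm \alpha}(m)(|b_m|-y^{\frac12})\otimes E_{m,1}$ and use the rate \eqref{bmcmdmyz12}; exactly as for $d_m$, this is bounded by $\|{\bm \alpha}\|_2M$.
\end{itemize}
One quasi-triangle inequality combines the two terms: the total is at most $C_{\mathcal{I}}(\|{\bm \alpha}\|_2M+\|{\bm \alpha}\|_2M)=2C_{\mathcal{I}}\|{\bm \alpha}\|_2M$.

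\textbf{The sum of the $c_m$.} This is symmetric to the $b_m$ case: pass to adjoints, since $\|w\|_{\mathcal{I}}=\|w^*\|_{\mathcal{I}}$. We have $c_m^*=p_mc_m^*$, so the $c_m^*$ are disjointly supported from the left, and $|c_m^*|\to z^{\frac12}$ at the same rate. The same computation then gives the bound, with $z^{\frac12}$ in place of $y^{\frac12}$.

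\textbf{Main obstacle.} The delicate points are convergence of infinite sums and the exact constant. First, we should establish the bounds for finitely supported ${\bm \alpha}$; for general ${\bm \alpha}\in\ell_2$, convergence of the series in $\mathcal{I}$ follows from the same estimates applied to tails, which gives a Cauchy criterion. Second, the rate $(2C_{\mathcal{I}})^{-m}$ must absorb the factor $C_{\mathcal{I}}^m$ coming from \eqref{qt infinitely many summands}, so that the error terms sum to at most $\|{\bm \alpha}\|_2M$.
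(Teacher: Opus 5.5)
Your proof is correct and follows the paper's argument essentially line by line for the $b_m$ sum: left-disjointness from $b_m=p_mb_m$, the column identity with $E_{m,1}$, the split $|b_m|=y^{\frac12}+(|b_m|-y^{\frac12})$, and absorbing $C_{\mathcal{I}}^m$ into the rate $(2C_{\mathcal{I}})^{-m}$; your treatment of $c_m$ via adjoints and of $d_m$ by the direct quasi-triangle estimate is exactly what the paper leaves to ``similar arguments''. One small addition to your convergence remark: to keep the exact constant $2C_{\mathcal{I}}$ when passing from partial sums to the limit, you need lower semicontinuity of the quasi-norm, which the Cauchy criterion alone does not give but the Fatou property does (via the weak-operator lower semicontinuity noted in the preliminaries).
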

\begin{proof}
We only prove the first estimate. The other two estimates follow from similar arguments. 
 Since the sequence $\{b_m\}_{m\geq1}$ is disjointly supported from the left, it follows that
$$\left|\sum_{m\geq1}{\bm \alpha}(m)b_m\right|=\left(\sum_{m\geq1} \left|{\bm \alpha}(m)b_m\right|^2\right)^{\frac12}.$$
Hence, we have 
$$\left|\sum_{m\geq1}{\bm \alpha}(m)b_m\right|\otimes E_{1,1}=\left(\sum_{m\geq1}|{\bm \alpha}(m)b_m|^2\right)^{\frac12}\otimes E_{1,1}=\left|\sum_{m\geq1} \left|{\bm \alpha}(m)b_m\right|\otimes E_{m,1}\right|.$$
Thus, by the quasi-triangle inequality of $\cI$, we have
\begin{eqnarray*}
& &\left\|\sum_{m\geq1}{\bm \alpha}(m)b_m\right\|_{\mathcal{I}}\\
&=&\left\|\sum_{m\geq1}|{\bm \alpha}(m)b_m|\otimes E_{m,1}\right\|_{\mathcal{I}}\\
&\stackrel{\eqref{qt infinitely many summands}}{\leq}& C_{\mathcal{I}}\cdot\left\|\sum_{m\geq1}|{\bm \alpha}(m)|y^{\frac12}\otimes E_{m,1}\right\|_{\mathcal{I}}+\sum_{m\geq1}C_{\mathcal{I}}^{m+1}|{\bm \alpha}(m)|\cdot \left\|(|b_m|-y^{\frac12})\otimes E_{m,1}\right\|_{\mathcal{I}}\\
&\stackrel{\eqref{bmcmdmyz12}}{\leq}  & C_{\mathcal{I}}\cdot \left\|{\bm \alpha}\right\|_2\left\|y^{\frac12}\right\|_{\mathcal{I}}+C_{\mathcal{I}}\cdot\left\|{\bm \alpha}\right\|_{\infty}\cdot\sum_{m\geq1}2^{-m}\cdot\max\left\{\left\|y^{\frac12}\right\|_{\mathcal{I}},\left\|z^{\frac12}\right\|_{\mathcal{I}},\epsilon\right\}\\
&\leq & 2C_{\mathcal{I}}\left\|{\bm \alpha}\right\|_2\max\left\{\left\|y^{\frac12}\right\|_{\mathcal{I}},\left\|z^{\frac12}\right\|_{\mathcal{I}},\epsilon\right\}.
\end{eqnarray*}
This completes the proof.
\end{proof}

\begin{lem}\label{aux corollary} Let $\left(\mathcal{I},\left\|\cdot\right\|_{\mathcal{I}}\right)$ be a separable quasi-Banach ideal in $B(H)$ possessing the Fatou property. 
Let $a_m$ be as in   Theorem \ref{fedor decomposition theorem}. We have
$$\left\|\sum_{m\geq1}{\bm \alpha}(m)a_m\right\|_{\infty}\leq \left\|{\bm \alpha}\right\|_{\infty}\cdot\sup_{j\geq1}\left\|x_j\right\|_{\mathcal{I}},~\forall {\bm \alpha} =({\bm \alpha}(m))_{m\ge 1}\in \ell_2.$$
\end{lem}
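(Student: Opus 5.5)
The idea is that $a_m=p_mx_{j_m}p_m$ with $\{p_m\}$ pairwise orthogonal, so $\sum_m{\bm \alpha}(m)a_m$ is block diagonal. For a finite sum this is immediate: the operator $\sum_{m\le N}{\bm \alpha}(m)a_m$ acts on the orthogonal direct sum $\bigoplus_m p_mH$, preserving each summand. Hence its uniform norm equals the maximum of the block norms,
$$\left\|\sum_{m\le N}{\bm \alpha}(m)a_m\right\|_\infty=\max_{m\le N}|{\bm \alpha}(m)|\,\|a_m\|_\infty .$$

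Next I bound each block. Since $\|p_m\|_\infty\le1$, we get $\|a_m\|_\infty\le\|x_{j_m}\|_\infty$. The normalisation fixed for quasi-Banach ideals gives $\|p\|_{\mathcal I}=1$ for rank-one projections, and $\|AB\|_{\mathcal I}\le\|A\|_{\mathcal I}\|B\|_\infty$. Take a rank-one projection $q$ onto the top singular vector of $x$. Then $\mu(0;x)\,q$ has singular values dominated by $\mu(x)$, so by symmetry of the quasi-norm $\|x\|_\infty=\mu(0;x)\le\|x\|_{\mathcal I}$. This gives $\|x_{j_m}\|_\infty\le\sup_j\|x_j\|_{\mathcal I}$, and therefore
$$\left\|\sum_{m\le N}{\bm \alpha}(m)a_m\right\|_\infty\le\|{\bm \alpha}\|_\infty\sup_j\|x_j\|_{\mathcal I}.$$

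Finally I pass to the infinite sum, which is the only delicate point. I would interpret $\sum_m{\bm \alpha}(m)a_m$ as the strong-operator limit of the partial sums. The partial sums are uniformly bounded and act on orthogonal blocks, so they converge strongly to the block-diagonal operator $\bigoplus_m{\bm \alpha}(m)a_m$. Its norm is $\sup_m|{\bm \alpha}(m)|\,\|a_m\|_\infty$, which the finite bound controls. Alternatively, the uniform norm is lower semicontinuous under weak operator convergence, which gives the same conclusion. If the paper instead means the sum converges in $\mathcal I$, that limit coincides with the strong limit, because convergence in $\mathcal I$ implies uniform convergence by the inequality $\|x\|_\infty\le\|x\|_{\mathcal I}$ above. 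So the estimate holds in either reading.
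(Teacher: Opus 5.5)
Your proposal is correct and follows the paper's own argument. The paper likewise identifies $\sum_m{\bm \alpha}(m)a_m$ with the block-diagonal operator $\bigoplus_m{\bm \alpha}(m)a_m$, uses $\|a_m\|_\infty\le\|x_{j_m}\|_\infty$, and bounds this by $\sup_j\|x_j\|_{\mathcal I}$; your extra justifications are sound details the paper leaves implicit, namely that $\|x\|_\infty\le\|x\|_{\mathcal I}$ follows from the normalisation $\|p\|_{\mathcal I}=1$ plus symmetry, and how the infinite sum is to be understood.
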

\begin{proof}
It suffices to note that
\begin{align*}
\left\|\sum_{m\geq1}{\bm \alpha}(m)a_m\right\|_{\infty}&=\left\|\bigoplus_{m\geq1}{\bm \alpha}(m)a_m\right\|_{\infty}=\sup_{m\geq1}|{\bm \alpha}(m)|\left\|a_m\right\|_{\infty}\\
&\leq \sup_{m\geq1}|{\bm \alpha}(m)|\left\|x_{j_m}\right\|_{\infty}\leq \left\|{\bm \alpha}\right\|_{\infty}\cdot\sup_{j\geq1}\left\|x_j\right\|_{\mathcal{I}}.
\end{align*}
\end{proof}
	
\section{Proof of Theorem \ref{main thm} for $p_1>2$}\label{first case section}
Theorem~\ref{main thm} for the case  when $p_1<2$ is treated in Section \ref{sec:holub}. 
In this section, we prove   Theorem~\ref{main thm} for the case when $p_1>2.$ 

%{\color{red} to Fedor: according to Jinghao, terminology ( $q$-Carothers-Dilwroth property) in the definition below is bad. this property should not be attributed to Carothers and Dilworth, but to somebody else. please, suggest a better terminology.}

\begin{defi} We say that a quasi-Banach ideal $(\mathcal{I},\left\|\cdot\right\|_{\mathcal{I}})$ possesses  property ($q$) if there exist constants $c_{\mathcal{I}}^{\uparrow}<\infty$ and $c_{\mathcal{I}}^{\downarrow}>0$ such that, for every sequence $\{A_M\}_{M\geq1}\subset\mathcal{I}$ converging to $0$ in the uniform norm, there exists a strictly increasing $\mathbb{N}$-valued sequence $\{M_n\}_{n\geq1}$ such that 
$$c_{\mathcal{I}}^{\downarrow}\left\|{\bm \alpha}\right\|_{\ell_q}\cdot\inf_{M\geq1}\left\|A_M \right\|_{\mathcal{I}}\leq\left\|\sum_{n\geq1}{\bm \alpha}(n) A_{M_n}\right\|_{\mathcal{I}}\leq c_{\mathcal{I}}^{\uparrow}\left\|{\bm \alpha}\right\|_{\ell_q}\cdot\sup_{M\geq1}\left\|A_M\right\|_{\mathcal{I}}.$$
\end{defi}
For example, $C_{p,q}$-ideals  possess the above property for $0<p,q<\infty $, see Lemma~\ref{CD lemma}.

For convenience, we introduce the following measure for $\mathbb{N}^2$:
for every finite set $A\subset\mathbb{N}^2,$ we 
$$S(A)=\sum_{M\geq1}\frac1M \left|\left\{A\cap(\{M\}\times\mathbb{N})\right\}\right|,$$
which will be useful in the proof of Lemmas \ref{fs lemma} and \ref{new verification lemma}  below. 
The following lemma is the key   auxiliary result  of this section.
\begin{lem}\label{fs lemma} Let $\left(\mathcal{I},\left\|\cdot\right\|_{\mathcal{I}}\right)$ be a separable quasi-Banach ideal having the Fatou property. Suppose   that $\left(\mathcal{I},\left\|\cdot\right\|_{\mathcal{I}}\right)$ possesses  the property ($q$). Let $\Theta,\Delta,c $  be positive scalars, $p> 2$ and let $\left\{x_{j}^{(M)}\right\}_{M,j\geq1}\subset\mathcal{I}$ be a bounded sequence such that
\begin{enumerate}[{\rm (i)}]
\item\label{sfla} for every finite set $A\subset\mathbb{N}^2,$
$$c\cdot \max \left\{S(A)^{\frac1p}\Delta,S(A)^{\frac12}\Theta\right\}\leq \left\|\sum_{(M,j)\in A}x_{j}^{(M)}\right\|_{\mathcal{I}}\leq \max\left\{S(A)^{\frac1p}\Delta,S(A)^{\frac12}\Theta\right\};$$
\item\label{sflb} for every $M\geq1,$ we have $x_{j}^{(M)}\to0$ in the weak operator topology as $j\to\infty;$
\end{enumerate}  
If $q\neq p,$ then there exists a constant $C_{c,p,\mathcal{I}}$ depending on $c,p, \cI$ only
such that 
$$\Delta\leq C_{c,p,\mathcal{I}}\cdot \Theta.$$
\end{lem}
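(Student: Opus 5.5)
The idea is to compare two growth rates for sums of ``row blocks'': the rate $L^{1/p}$ forced by (i), and the rate $L^{1/q}$ forced by property ($q$). Property ($q$) will be applied to the block-diagonal ($a$-) parts of a decomposition from Theorem~\ref{fedor decomposition theorem}. The off-diagonal parts are controlled by Theorem~\ref{limit point theorem} and Lemma~\ref{main corollary}, and cost only $O(\sqrt L\,\Theta)$. If $\Delta/\Theta$ were large, there would be a long range of $L$ on which both descriptions apply, which forces $q=p$.

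\textbf{Preliminary bounds.} We may assume $\Delta\geq\Theta$, since otherwise there is nothing to prove. Write $r=\frac{2p}{p-2}$. For a finite $A\subset\{M\}\times\mathbb{N}$ with $|A|=k$ we have $S(A)=k/M$. Hence, whenever $k\geq M(\Delta/\Theta)^{r}$, the $\Theta$-term dominates in (i), and $\|\sum_{j\in A}x^{(M)}_j\|_{\mathcal I}\leq (k/M)^{1/2}\Theta$. By (ii), Theorem~\ref{limit point theorem} applies to row $M$ with $n=\lceil M(\Delta/\Theta)^r\rceil$ and $\Theta$ replaced by $M^{-1/2}\Theta$. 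It shows that every weak operator topology limit point $y_M$ of $\{(x^{(M)}_j)^*x^{(M)}_j\}_j$ satisfies $\|y_M^{1/2}\|_{\mathcal I}\leq C_{\mathcal I}^2\Theta M^{-1/2}$, and similarly for $z_M$ (apply the theorem to adjoints). A single element gives $S=1/M$. So (i) yields $\|x^{(M)}_j\|_\infty\leq\|x^{(M)}_j\|_{\mathcal I}\leq M^{-1/p}\Delta\to0$ as $M\to\infty$. This decay of uniform norms is what makes property ($q$) applicable.

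\textbf{Joint decomposition.} I would run the inductive construction in the proof of Theorem~\ref{fedor decomposition theorem} simultaneously over all rows. For a sparse sequence of rows $M$, select $M$ indices in row $M$, one after another, all lying in pairwise orthogonal blocks $p_m$ arranged diagonally across all selected rows. Each selected element then decomposes as $a+b+c+d$ as in that theorem, with the $b$- and $c$-parts approximating $y_M^{1/2}$ and $z_M^{1/2}$ respectively, and summable errors as in Remark~\ref{fedor decomposition remark}. Let $u_M$ be the sum of the $a$-parts over the $M$ chosen elements of row $M$. The $u_M$ are disjointly supported, and by the orthogonality in Lemma~\ref{aux corollary}, $\|u_M\|_\infty\leq M^{-1/p}\Delta\to0$. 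The $b$-, $c$-, $d$-parts are handled as in Lemma~\ref{main corollary}, using the row-square-function identity, now with weights $y_M^{1/2}$ that vary with the row. For any collection of $L$ selected rows (so $LM$-type sets with $S=L$), the total contribution of these parts should be $\lesssim C_{\mathcal I}\big(\sum_M M\cdot C_{\mathcal I}^4\Theta^2M^{-1}\big)^{1/2}+\epsilon\lesssim \sqrt L\,\Theta$. Each row block has $S=1$, so (i) gives $c\Delta\leq\|\sum_{\rm row}x\|\leq\Delta$, and therefore $c\Delta-O(\Theta)\lesssim\|u_M\|_{\mathcal I}\lesssim\Delta+O(\Theta)$.

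\textbf{Conclusion.} Apply property ($q$) to $\{u_M\}$ and pass to a subsequence $\{M_n\}$. For $1\leq L$ we get
$$c_{\mathcal I}^{\downarrow}L^{1/q}(c\Delta-O(\Theta))\lesssim\Big\|\sum_{n\leq L}u_{M_n}\Big\|_{\mathcal I}\lesssim \max\{L^{1/p}\Delta,L^{1/2}\Theta\}+O(\sqrt L\,\Theta),$$
and symmetrically $c\,L^{1/p}\Delta\lesssim c_{\mathcal I}^{\uparrow}L^{1/q}(\Delta+O(\Theta))+O(\sqrt L\,\Theta)$. For $L\leq (\Delta/\Theta)^{r}$ the right-hand sides are $\lesssim L^{1/p}\Delta$ and $\lesssim L^{1/q}\Delta$ respectively, with constants depending only on $c,p,\mathcal I$. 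Thus $L^{1/q}\asymp L^{1/p}$ on the whole range $1\leq L\leq(\Delta/\Theta)^r$. Since $q\neq p$, this bounds $(\Delta/\Theta)^r$ by a constant depending only on $c,p,\mathcal I$, which gives $\Delta\leq C_{c,p,\mathcal I}\Theta$.

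The main obstacle is the joint decomposition step. Theorem~\ref{fedor decomposition theorem} is stated for one sequence with a single pair of limit points. It must be redone across infinitely many rows, with row-dependent limits $y_M,z_M$, while keeping the square-function estimate of Lemma~\ref{main corollary} with total error $O(\sqrt L\,\Theta)$ uniformly in the chosen rows. I would attempt this by enumerating the selected elements into one sequence and repeating the inductive choice of the $P_{n_m}$ verbatim.
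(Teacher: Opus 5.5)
There is a gap, and it sits in the step you flag as the main obstacle. That step is also unnecessary. Your other ingredients match the paper's: the bound $\|y_M^{1/2}\|_{\mathcal I},\|z_M^{1/2}\|_{\mathcal I}\le C_{\mathcal I}^2M^{-1/2}\Theta$ from Theorem~\ref{limit point theorem}, the decay of the uniform norms of the diagonal parts, property ($q$) applied to those parts, and a comparison of $N^{1/p}$ with $N^{1/q}$.

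\textbf{The cross-row estimate.} Your bound $\lesssim C_{\mathcal I}\big(\sum_M M\cdot C_{\mathcal I}^4\Theta^2M^{-1}\big)^{1/2}$ for the $b$-, $c$-, $d$-parts over $L$ rows does not follow from the argument of Lemma~\ref{main corollary}. That lemma gets the factor $\|{\bm\alpha}\|_2$ only because every $|b_m|$ is close to one and the same operator $y^{1/2}$, so that $\|\sum_m|{\bm\alpha}(m)|\,y^{1/2}\otimes E_{m,1}\|_{\mathcal I}=\|{\bm\alpha}\|_2\|y^{1/2}\|_{\mathcal I}$. With row-dependent limits, the same computation leaves you with $\|(\sum_{k\le L}M_ky_{M_k})^{1/2}\|_{\mathcal I}$. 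Bounding this by $(\sum_{k\le L}M_k\|y_{M_k}^{1/2}\|_{\mathcal I}^2)^{1/2}$ is a $2$-convexity inequality, which is false in general. In $\mathcal C_{p_2,q_2}$ with $p_2<2$, take $y_{M_k}^{1/2}=M_k^{-1/2}\Theta e_k$ with disjoint rank-one projections $e_k$. The left side is then $\Theta L^{1/p_2}$ and the right side is $\Theta L^{1/2}$, and the lemma has to hold for such ideals. You might recover an $O(\sqrt L\,\Theta)$ bound by applying Theorem~\ref{limit point theorem} to blocks built from elements of $L$ different rows, which have $S=L$. As written, however, the step fails.

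\textbf{A slip in the conclusion.} For $q>p$, near the top of the range $L\le(\Delta/\Theta)^r$ one has $\sqrt L\,\Theta\approx L^{1/p}\Delta\gg L^{1/q}\Delta$. So the right-hand side is not $\lesssim L^{1/q}\Delta$ there. The $\sqrt L\,\Theta$ term has to be absorbed into $cL^{1/p}\Delta$ on a range shortened by a constant factor.

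\textbf{How the paper avoids this.} It decomposes each row separately by Theorem~\ref{fedor decomposition theorem}, with $\epsilon=C_{\mathcal I}^2M^{-1/2}\Theta$, and sets $A_M=\sum_{m\le M}a_m^{(M)}$. Lemma~\ref{main corollary}, applied within one row and its single limit $y_M$, gives $\|B_M\|_{\mathcal I},\|C_M\|_{\mathcal I},\|D_M\|_{\mathcal I}\le 2C_{\mathcal I}^3\Theta$. Property ($q$) applies to any sequence tending to $0$ in the uniform norm. So the $A_M$ need not be disjoint, and no joint construction is required.

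Moreover, one fixed number $N$ of rows suffices, chosen depending only on $c,p,\mathcal I$. When $q>p$ one needs $cN^{1/p}>7c_{\mathcal I}^{\uparrow}C_{\mathcal I}^5N^{1/q}$, and the analogous reverse comparison when $q<p$. For this fixed $N$, the off-diagonal sums over the $N$ rows are bounded crudely via \eqref{qt many summands} by $6C_{\mathcal I}^6N^{\log_2(C_{\mathcal I})+1}\Theta$, which is just a constant times $\Theta$. Control of the off-diagonal parts that is uniform in $L$ is never needed.
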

\begin{proof} For every $M\geq1,$ it is assumed in \eqref{sflb} that $x_{j}^{(M)}\to0$ in the weak operator topology as $j\to\infty.$ For every $M\geq 1,$ applying Theorem \ref{fedor decomposition theorem} to the sequence $\{x^{(M)}_{j}\}_{j\geq1},$ we choose a strictly increasing $\mathbb{N}$-valued sequence $\{j_{m}^{(M)}\}_{m\geq 1}$ and a decomposition
\begin{align}\label{xMdecomposition}
x^{(M)}_{j_{m}^{(M)}}=a_{m}^{(M)}+b^{(M)}_{m}+c_{m}^{(M)}+d_{m}^{(M)},\quad m\geq1.
\end{align}
Let $y_M$ and $z_M$ be given by Theorem \ref{fedor decomposition theorem}. For every $M\geq 1,$ 
set  $\epsilon=C_{\mathcal{I}}^2M^{-\frac12}\Theta$ in  \eqref{bmcmdmyz12}  of Remark \ref{fedor decomposition remark}.
    By \eqref{sfla} (in particular, we have $\norm{\sum_{j\in A\subset \{M\}\times \mathbb{N} }x^{(M)}_j }_\cI \le  \left( \frac{|A|}{M}\right)^{1/2}\Theta $ for sufficiently large $|A|$) and Theorem \ref{limit point theorem}, we have  
\begin{equation}\label{sfl eq0}
\left\|y_M^{\frac12}\right\|_{\mathcal{I}},~\left\|z_M^{\frac12}\right\|_{\mathcal{I}}\leq C_{\mathcal{I}}^2M^{-\frac12}\Theta,\quad M\geq 1.
\end{equation}
We now write
$$\sum_{m=1}^Mx^{(M)}_{j^{(M)}_{m}}=A_M+B_M+C_M+D_M,$$
where (see \eqref{xMdecomposition})
$$A_M=\sum_{m=1}^Ma_{m}^{(M)},\quad B_M=\sum_{m=1}^Mb^{(M)}_{m},\quad C_M=\sum_{m=1}^Mc_{m}^{(M)},\quad D_M=\sum_{m=1}^Md_{m}^{(M)}.$$
By Lemma \ref{main corollary} and \eqref{sfl eq0}, we have
\begin{equation}\label{sfl eq1}
\left\|B_M\right\|_{\mathcal{I}},~\left\|C_M\right\|_{\mathcal{I}},~\left\|D_M\right\|_{\mathcal{I}}\leq 2C_{\mathcal{I}}M^{\frac12}\max\left\{\left\|y_M^{\frac12}\right\|_{\mathcal{I}},~\left\|z_M^{\frac12}\right\|_{\mathcal{I}},~\epsilon\right\}\leq 2C_{\mathcal{I}}^3\Theta.
\end{equation}

By the quasi-triangle inequality of $\cI$, we have 
\begin{eqnarray*}
\left\|A_M\right\|_{\mathcal{I}}&=&\left\|\sum_{m=1}^Mx^{(M)}_{ j^{(M)}_{m}}-B_M-C_M-D_M\right\|_{\mathcal{I}}\\
&\leq &
C_{\mathcal{I}}^2\left(\left\|\sum_{m=1}^Mx^{(M)}_{j^{(M)}_{m}}\right\|_{\mathcal{I}}+\left\|B_M\right\|_{\mathcal{I}}+\left\|C_M\right\|_{\mathcal{I}}+\left\|D_M\right\|_{\mathcal{I}}\right)\\
&
\stackrel{\eqref{sfla},\eqref{sfl eq1}}{\leq}& C_{\mathcal{I}}^2\Big(\max\{\Delta,\Theta\}+6C_{\mathcal{I}}^3\Theta\Big)\\
&\leq & 7C_{\mathcal{I}}^5\max\{\Delta,\Theta\}
\end{eqnarray*}
and 
\begin{eqnarray*}
c\cdot \max\{\Delta,\Theta\}&
\stackrel{\eqref{sfla}}{\leq}& \left\|\sum_{m=1}^Mx^{(M)}_{j^{(M)}_{m}}\right\|_{\mathcal{I}}\\
&=&\left\|A_M+B_M+C_M+D_M\right\|_{\mathcal{I}}\\
&\leq & 
C_{\mathcal{I}}^2\left(\left\|A_M\right\|_{\mathcal{I}}+\left\|B_M\right\|_{\mathcal{I}}+\left\|C_M\right\|_{\mathcal{I}}+\left\|D_M\right\|_{\mathcal{I}}\right)\\&\stackrel{\eqref{sfl eq1}}{\leq}&
 C_{\mathcal{I}}^2\left(\left\|A_M\right\|_{\mathcal{I}}+6C_{\mathcal{I}}^3\Theta\right).
\end{eqnarray*} 
Hence, we have 
$$\left\|A_M\right\|_{\mathcal{I}}\geq c\cdot C_{\mathcal{I}}^{-2}\cdot \max\left\{\Delta,\Theta\right\}-6C_{\mathcal{I}}^3\Theta\geq c\cdot C_{\mathcal{I}}^{-2}\Delta-6C_{\mathcal{I}}^3\Theta.$$
By Lemma \ref{aux corollary}, we have
$$\left\|A_M\right\|_{\infty}\le \sup_{j\ge 1}\norm{x^{(M)}_{j^{(M)}_{m}}}_\cI \stackrel{\eqref{sfla}}{\leq} \max\left\{M^{-\frac1p}\Delta,M^{-\frac12}\Theta\right\}.$$
By property ($q$), there exists a strictly increasing $\mathbb{N}$-valued sequence $\{M_n\}_{n\geq1}$ such that
\begin{equation}\label{sfl eq2}
c_{\mathcal{I}}^{\downarrow}\left\|{\bm \alpha}\right\|_{\ell_q}\big(c\cdot C_{\mathcal{I}}^{-2}\Delta-6C_{\mathcal{I}}^3\Theta\big)\leq \left\|\sum_{n\geq1}{\bm \alpha}(n)A_{M_n}\right\|_{\mathcal{I}}\leq 7c_{\mathcal{I}}^{\uparrow}C_{\mathcal{I}}^5\left\|{\bm \alpha}\right\|_{\ell_q}\max\{\Delta,\Theta\}
\end{equation}
for any ${\bm \alpha}=({\bm \alpha}(n))_{n\ge 1}\in \ell_q$. 

Now, we prove the assertion for the case when $q>p.$ For every $N\in\mathbb{N},$ we have
\begin{eqnarray*}
cN^{\frac1p}\Delta&\leq& c\cdot\max\{N^{\frac1p}\Delta,N^{\frac12}\Theta\}\stackrel{\eqref{sfla}}{\leq} \left\|\sum_{n=1}^N\sum_{m=1}^{M_n}x^{(M_n)}_{j^{(M_n)}_{m}}\right\|_{\mathcal{I}}\\
&=&\left\|\sum_{n=1}^NA_{M_n}+B_{M_n}+C_{M_n}+D_{M_n}\right\|_{\mathcal{I}}\\
&\leq& C_{\mathcal{I}}^2\cdot\left(\left\|\sum_{n=1}^NA_{M_n}\right\|_{\mathcal{I}}+\left\|\sum_{n=1}^N B_{M_n}\right\|_{\mathcal{I}}+\left\|\sum_{n=1}^NC_{M_n}\right\|_{\mathcal{I}}+\left\|\sum_{n=1}^ND_{M_n}\right\|_{\mathcal{I}}\right)\\
&\stackrel{\eqref{qt many summands}}{\leq}& C_{\mathcal{I}}^2\cdot\left(\left\|\sum_{n=1}^NA_{M_n}\right\|_{\mathcal{I}}+(2N)^{\log_2(C_{\mathcal{I}})}\sum_{n=1}^N\left(\left\| B_{M_n}\right\|_{\mathcal{I}}+\left\|C_{M_n}\right\|_{\mathcal{I}}+\left\|D_{M_n}\right\|_{\mathcal{I}}\right)\right)\\
&\stackrel{\eqref{sfl eq1},\eqref{sfl eq2}}{\leq}& 7c_{\mathcal{I}}^{\uparrow}C_{\mathcal{I}}^5N^{\frac1q}\max\{\Delta,\Theta\}+6C_{\mathcal{I}}^6N^{\log_2(C_{\mathcal{I}})+1}\Theta.
\end{eqnarray*}
Hence, we have   
%Omitting the intermediate terms and replacing maximum with the sum, we write
$$c\cdot N^{\frac1p}\Delta\leq 7c_{\mathcal{I}}^{\uparrow}C_{\mathcal{I}}^5N^{\frac1q}(\Delta+\Theta)+6C_{\mathcal{I}}^6N^{\log_2(C_{\mathcal{I}})+1}\Theta.$$
Set
$$N_1=\min\{N\in\mathbb{N}:\ cN^{\frac1p}>7c_{\mathcal{I}}^{\uparrow}C_{\mathcal{I}}^5N^{\frac1q}\},\quad C_{c,p,\mathcal{I}}=\frac{7c_{\mathcal{I}}^{\uparrow}C_{\mathcal{I}}^5N_1^{\frac1q}+6C_{\mathcal{I}}^6N_1^{\log_2(C_{\mathcal{I}})+1}}{c\cdot N_1^{\frac1p}-7c_{\mathcal{I}}^{\uparrow}C_{\mathcal{I}}^5N_1^{\frac1q}},$$
we obtain $\Delta\leq C_{c,p,\mathcal{I}}\Theta.$ This completes the proof for the case when  $q>p.$

Now, we prove the assertion for the case when $q<p.$ For every $N\in\mathbb{N},$ we have
\begin{eqnarray*}
& & c_{\mathcal{I}}^{\downarrow}N^{\frac1q}\left(c\cdot C_{\mathcal{I}}^{-2}\Delta-6C_{\mathcal{I}}^3\Theta\right)\\
&\stackrel{\eqref{sfl eq2}}{\leq}&\left\|\sum_{n=1}^NA_{M_n}\right\|_{\mathcal{I}}\\
& =&\left\|\sum_{n=1}^N\sum_{m=1}^{M_n}x^{(M_n)}_{j^{(M_n)}_{m}}-\sum_{n=1}^NB_{M_n}-\sum_{n=1}^NC_{M_n}-\sum_{n=1}^ND_{M_n}\right\|_{\mathcal{I}}\\
&\leq& C_{\mathcal{I}}^2\left(\left\|\sum_{n=1}^N\sum_{m=1}^{M_n}x^{(M_n)}_{j^{(M_n)}_{m}}\right\|_{\mathcal{I}}+\left\|\sum_{n=1}^NB_{M_n}\right\|_{\mathcal{I}}+\left\|\sum_{n=1}^NC_{M_n}\right\|_{\mathcal{I}}+\left\|\sum_{n=1}^ND_{M_n}\right\|_{\mathcal{I}}\right)\\
&\stackrel{\eqref{qt many summands}}{\leq}& C_{\mathcal{I}}^2\left(\left\|\sum_{n=1}^N\sum_{m=1}^{M_n}x^{(M_n)}_{ j^{(M_n)}_{m}}\right\|_{\mathcal{I}}+(2N)^{\log_2(C_{\mathcal{I}})}\sum_{n=1}^N\left(\left\|B_{M_n}\right\|_{\mathcal{I}}+\left\|C_{M_n}\right\|_{\mathcal{I}}+\left\|D_{M_n}\right\|_{\mathcal{I}} \right)\right)\\
&\stackrel{\eqref{sfl eq1}}{\leq}& C_{\mathcal{I}}^2\left\|\sum_{n=1}^N\sum_{m=1}^{M_n}x^{(M_n)}_{j^{(M_n)}_{m}}\right\|_{\mathcal{I}}+6C_{\mathcal{I}}^6N^{\log_2(C_{\mathcal{I}})+1}\Theta\\
&\stackrel{\eqref{sfla}}{\leq}& C_{\mathcal{I}}^2\max\{N^{\frac1p}\Delta,N^{\frac12}\Theta\}+6C_{\mathcal{I}}^6N^{\log_2(C_{\mathcal{I}})+1}\Theta.
\end{eqnarray*}
Therefore, we have   
%Omitting the intermediate terms and replacing maximum with the sum, we write
$$c_{\mathcal{I}}^{\downarrow}N^{\frac1q}\big(c\cdot C_{\mathcal{I}}^{-2}\Delta-6C_{\mathcal{I}}^3\Theta\big)\leq C_{\mathcal{I}}^2(N^{\frac1p}\Delta+N^{\frac12}\Theta)+6C_{\mathcal{I}}^6N^{\log_2(C_{\mathcal{I}})+1}\Theta.$$
Setting 
$$N_2=\min\left\{N\in\mathbb{N}:\ c_{\mathcal{I}}^{\downarrow}c N^{\frac1q}>C_{\mathcal{I}}^4N^{\frac1q}\right\}$$
and 
$$C_{c,p,\mathcal{I}}=\frac{6c_{\mathcal{I}}^{\downarrow}c C_{\mathcal{I}}^5N_2^{\frac1q}+C_{\mathcal{I}}^4N_2^{\frac12}+6C_{\mathcal{I}}^8N_2^{\log_2(C_{\mathcal{I}})+1}}{c_{\mathcal{I}}^{\downarrow}c N_2^{\frac1q}-C_{\mathcal{I}}^4N_2^{\frac1q}},$$
we obtain $\Delta\leq C_{c,p,\mathcal{I}}\Theta.$ This completes the proof for the case when $q<p.$
\end{proof}
Below, we construct a sequence in $\cI= \mathcal{C}_{p_2,q_2}$ satisfying the assumptions in  Lemma \ref{fs lemma} provided that the function space $ (L_{p_1,q_1}\cap L_2) (0,\infty)$ embeds isomorphically into $  \mathcal{C}_{p_2,q_2}$.
Recall that the quasi-norm of an element $x\in  (L_{p_1,q_1}\cap L_2) (0,\infty)$ is given by (see e.g. \cite[p.404]{Mastylo} and \cite[Chapter 7]{DPS})
\begin{align}\label{intersectionnorm}
\norm{x}_{ (L_{p_1,q_1}\cap L_2) (0,\infty)}:= \max\{  \norm{x}_{L_{p_1,q_1}},\norm{x }_{L_2}\}. 
\end{align}
\begin{lem}\label{new verification lemma} Let $2<p_1<\infty$ and $0<p_2,q_1,q_2<\infty.$ Suppose
$$T:(L_{p_1,q_1}\cap L_2)((0,\infty)^3)\to \mathcal{C}_{p_2,q_2}$$
is an isomorphic embedding. For every $f\in L_{\infty}(0,1)$ (in particular, $f\otimes\chi_{(j,j+1)}\otimes\chi_{(M,M+\frac1M)}\in (L_{p_1,q_1}\cap L_2)((0,\infty)^3)$), the sequence 
$$\left\{x_{j}^{(M)}\right\}_{M,j\geq1}=\left
\{T\left(f\otimes\chi_{(j,j+1)}\otimes\chi_{(M,M+\frac1M)}\right)
\right\}_{M,j\geq1}$$
satisfies the assumptions \eqref{sfla} and \eqref{sflb} in Lemma \ref{fs lemma} with $p=p_1,$ $q=q_2,$ $\Theta=\left\|f\right\|_2,$ $\Delta=\left\|f\right\|_{p_1,q_1}$ and 
$$c=\left\|T^{-1}\right\|_{T((L_{p_1,q_1}\cap L_2)((0,\infty)^3))\to (L_{p_1,q_1}\cap L_2)((0,\infty)^3)}^{-1}.$$
Here, we assume
$$\|T\|_{(L_{p_1,q_1}\cap L_2)((0,\infty)^3)\to  \mathcal{C}_{p_2,q_2}}=1.$$
\end{lem}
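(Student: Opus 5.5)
Two things have to be checked: the two-sided estimate \eqref{sfla} and the weak operator convergence \eqref{sflb}. For \eqref{sfla}, the first step is to compute the quasi-norm of $g_A:=\sum_{(M,j)\in A} f\otimes\chi_{(j,j+1)}\otimes\chi_{(M,M+\frac1M)}$ in $(L_{p_1,q_1}\cap L_2)((0,\infty)^3)$ exactly. The summands are disjointly supported, and each is equimeasurable with $f$ compressed to a set of measure $\frac1M$. Hence $\mu(g_A)=\sigma_{S(A)}\mu(f)$, i.e. the distribution function of $g_A$ is $S(A)$ times that of $f$. This gives
$$\|g_A\|_{L_{p_1,q_1}}=S(A)^{\frac1{p_1}}\|f\|_{p_1,q_1}, \qquad \|g_A\|_{L_2}=S(A)^{\frac12}\|f\|_2 ,$$
since, after the change of variables $t\mapsto S(A)t$, the $L_{p,q}$ quasi-norm scales like $S(A)^{1/p}$ under $\sigma_{S(A)}$. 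By \eqref{intersectionnorm}, $\|g_A\|=\max\{S(A)^{\frac1{p_1}}\Delta, S(A)^{\frac12}\Theta\}$ with $\Delta=\|f\|_{p_1,q_1}$ and $\Theta=\|f\|_2$. Linearity gives $T(g_A)=\sum_{(M,j)\in A}x^{(M)}_j$. The upper bound in \eqref{sfla} then follows from $\|T\|=1$, and the lower bound from $\|g_A\|\le \|T^{-1}\|\,\|T(g_A)\|$, which is the bound with $c=\|T^{-1}\|^{-1}$. Boundedness of the family: $A=\{(M,j)\}$ gives $\|x^{(M)}_j\|\le \max\{M^{-1/p_1}\Delta,M^{-1/2}\Theta\}\le \max\{\Delta,\Theta\}$.

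For \eqref{sflb}, fix $M$. I will exploit the $\ell_2$-type upper estimate. Take $A=\{M\}\times B$ with $B\subset\mathbb{N}$ finite, so $S(A)=|B|/M$. Since $p_1>2$, for $|B|$ large the maximum is $(|B|/M)^{1/2}\Theta$, and therefore $\|\sum_{j\in B}x_j^{(M)}\|_{\mathcal{C}_{p_2,q_2}}\le C|B|^{1/2}$ for all large $|B|$. Now suppose $x_j^{(M)}\not\to0$ in the weak operator topology. Then there are unit vectors $\xi,\eta$, a subsequence $(j_k)$, and $\delta>0$ with $\operatorname{Re}(e^{i\theta}\langle x^{(M)}_{j_k}\xi,\eta\rangle)\ge\delta$ for a fixed phase $\theta$; a subsequence with constant phase exists by compactness of the circle after passing to a further subsequence. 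Taking $B=\{j_1,\dots,j_N\}$ gives
$$N\delta\le \Big|\Big\langle \sum_{k\le N}x^{(M)}_{j_k}\xi,\eta\Big\rangle\Big|\le \Big\|\sum_{k\le N} x^{(M)}_{j_k}\Big\|_\infty\le \Big\|\sum_{k\le N} x^{(M)}_{j_k}\Big\|_{\mathcal{C}_{p_2,q_2}}\le C N^{1/2}.$$
Here I use the normalization $\|\cdot\|_\infty\le\|\cdot\|_{\mathcal{I}}$ that comes from $\|p\|_{\mathcal{I}}=1$ for rank-one projections, possibly up to a constant. This is a contradiction for large $N$. Thus every weak-operator limit point is $0$, and boundedness plus compactness of balls in the weak operator topology yield $x_j^{(M)}\to0$ weakly.

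The main obstacle is only bookkeeping. One must check the exact scaling identity $\|\sigma_s\mu(f)\|_{L_{p,q}}=s^{1/p}\|\mu(f)\|_{L_{p,q}}$ on $(0,\infty)$ for quasi-norms, which is immediate from Definition \ref{lpq def}. One must also verify that $g_A$ is genuinely equimeasurable with $\sigma_{S(A)}\mu(f)$, using disjointness of the sets $(j,j+1)\times(M,M+\frac1M)$ and the fact that each has measure $\frac1M$.
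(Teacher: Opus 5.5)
Your proof is correct, but for (ii) it follows a different route from the paper. For (i) it coincides with the paper: $g_A$ is equimeasurable with $\sigma_{S(A)}\mu(f)$, its quasi-norm in $L_{p_1,q_1}\cap L_2$ is $\max\{S(A)^{1/p_1}\Delta,S(A)^{1/2}\Theta\}$, and the two inequalities follow from $\|T\|=1$ and the definition of $c$.

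\textbf{The paper's route for (ii).} It fixes $r>p_1$ and uses Lozanovski\u{\i}'s description of the dual of the Banach space $(L_r\cap L_2)((0,\infty)^3)$ as $L_{r/(r-1)}+L_2$. From this it concludes that the disjoint, escaping functions $f\otimes\chi_{(j,j+1)}\otimes\chi_{(M,M+\frac1M)}$ are weakly null there. It then composes the inclusion $L_r\cap L_2\subset L_{p_1,q_1}\cap L_2$ with the functionals $g\mapsto\langle T(g)\xi,\eta\rangle$. The detour through $L_r$ avoids having to deal with the dual of the quasi-Banach space $L_{p_1,q_1}\cap L_2$.

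\textbf{Your route for (ii).} You reuse only the upper half of (i) on a single row $A=\{M\}\times B$, where unit-coefficient sums grow like $O(|B|^{1/2})=o(|B|)$. You combine this with domination of the operator norm by the ideal quasi-norm. A phase-aligned subsequence with $|\langle x^{(M)}_{j_k}\xi,\eta\rangle|\ge\delta$ would force linear growth, which is a contradiction.

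\textbf{What each buys.} Your argument is more elementary and self-contained: it needs no duality theorem and no auxiliary exponent. It uses only boundedness of $T$, not the lower estimate, and only sublinear growth of row sums. The paper's argument instead gives the structural fact that the generating functions are weakly null in a Banach space containing them, independently of any growth estimate.

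Two small clean-ups:
\begin{itemize}
\item The inequality $\|x\|_\infty\le\|x\|_{\mathcal{C}_{p_2,q_2}}$ holds with constant $1$, with no need for ``possibly up to a constant''. Indeed $\mu(t;x)=\|x\|_\infty$ for $t\in[0,1)$, so $\|x\|_{\mathcal{C}_{p_2,q_2}}^{q_2}\ge\int_0^1\mu(t;x)^{q_2}\,dt^{q_2/p_2}=\|x\|_\infty^{q_2}$.
\item The closing appeal to limit points and weak-operator compactness of balls is superfluous. Your contradiction argument, applied to the scalar sequence $\langle x^{(M)}_j\xi,\eta\rangle$ for each fixed pair $\xi,\eta$, already gives $x^{(M)}_j\to0$ in the weak operator topology directly.
\end{itemize}
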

\begin{proof}
For any finite  set $A\subset \mathbb{N}^2$, 
by the definition of $S(A)$,  the Lebesgue measure of the support (in $(0,\infty)^2$) of $\sum_{M,j\in A} \chi_{(j,j+1)}\otimes\chi_{(M,M+\frac1M)}$ is equal to  $S(A)$, and therefore, 
we have
\begin{eqnarray*}
\left\|\sum_{M,j\in A}x_{j}^{(M)} \right\|_{ \mathcal{C}_{p_2,q_2}}&=&\left\|T\left(\sum_{M,j\in A}f\otimes\chi_{(j,j+1)}\otimes\chi_{(M,M+\frac1M)}\right)\right\|_{ \mathcal{C}_{p_2,q_2}}\\
&\leq&\left\|\sum_{M,j\in A}f\otimes\chi_{(j,j+1)}\otimes\chi_{(M,M+\frac1M)}\right\|_{(L_{p_1,q_1}\cap L_2)((0,\infty)^3)}\\
&=&\left\|f\otimes\chi_{(0,S(A))}\right\|_{(L_{p_1,q_1}\cap L_2)((0,\infty)^2)}\\
&\stackrel{\eqref{intersectionnorm}}{=}&\max\left\{S(A)^{\frac1{p_1}}\left\|f\right\|_{p_1,q_1},S(A)^{\frac12}\left\|f\right\|_2\right\}\\
&=&\max\left\{S(A)^{\frac1p}\Delta,S(A)^{\frac12}\Theta\right\}
\end{eqnarray*} 
and
\begin{eqnarray*}
\left\|\sum_{M,j\in A}x_{j}^{(M)} \right\|_{ \mathcal{C}_{p_2,q_2}}&=&\left\|T\left(\sum_{M,j\in A}f\otimes\chi_{(j,j+1)}\otimes\chi_{(M,M+\frac1M)}\right)\right\|_{ \mathcal{C}_{p_2,q_2}}\\
&\geq& c\cdot\left\|\sum_{M,j\in A}f\otimes\chi_{(j,j+1)}\otimes\chi_{(M,M+\frac1M)}\right\|_{(L_{p_1,q_1}\cap L_2)((0,\infty)^3)}\\
&=&c\cdot \left\|f\otimes\chi_{(0,S(A))}\right\|_{(L_{p_1,q_1}\cap L_2)((0,\infty)^2)}\\
&=&c\cdot \max\left\{S(A)^{\frac1{p_1}}\left\|f\right\|_{p_1,q_1},S(A)^{\frac12}\left\|f\right\|_2\right\}\\
&=&c\cdot \max\left\{
S(A)^{\frac1p}\Delta,S(A)^{\frac12}\Theta\right\}.
\end{eqnarray*}
This proves the assumption \eqref{sfla} in Lemma \ref{fs lemma}.

Fix $r>p_1>2.$ For every bounded linear functional $F$ on $(L_r\cap L_2)((0,\infty)^3),$ there exists $\psi\in (L_{\frac{r}{r-1}}+L_2)((0,\infty)^3)$\cite{Loza} (see also \cite{SemSuk}) such that
$$F:g\to \int \psi g,\quad g\in (L_r\cap L_2)((0,\infty)^3).$$
Hence, the sequence $\{f\otimes\chi_{(j,j+1)}\otimes\chi_{(M,M+\frac1M)}\}_{j\geq1}$ converges weakly to $0$ in the space $(L_r\cap L_2)((0,\infty)^3)$ as $j\to\infty.$

Since $r>p_1>2,$ it follows that $(L_r\cap L_2)((0,\infty)^3)\subset (L_{p_1,q_1}\cap L_2)((0,\infty)^3).$ Fix $\xi,\eta\in \ell_2$ and define a bounded linear functional $F_{\xi,\eta}$ on $(L_r\cap L_2)((0,\infty)^3)$ by the formula
$$F_{\xi,\eta}:g\to \langle T(g)\xi,\eta\rangle,\quad g\in (L_r\cap L_2)((0,\infty)^3).$$
It follows from the preceding paragraph that
$$\langle x^{(M)}_{j}\xi,\eta\rangle=F_{\xi,\eta}(f\otimes\chi_{(j,j+1)}\otimes\chi_{(M,M+\frac1M)})\to0,\quad j\to\infty.$$
Since $\xi,\eta\in \ell_2$ are arbitrary, it follows that $x_{j}^{(M)} \to0$ in the weak operator topology as $j\to\infty.$ This proves the assumption \eqref{sflb} in Lemma \ref{fs lemma}.
\end{proof}

 Now, we show that the isomorphic  embedding $(L_{p_1,q_1}\cap L_2)(0,\infty)$ into $ \mathcal{C}_{p_2,q_2}$  implies that  $p_1=q_2.$
 
\begin{lem}\label{final lemma} Let $p_1,q_1,p_2,q_2>0.$ If $p_1>2$ and $(L_{p_1,q_1}\cap L_2)(0,\infty)\hookrightarrow \mathcal{C}_{p_2,q_2},$ then $p_1=q_2.$
\end{lem}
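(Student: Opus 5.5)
We argue by contradiction: suppose $p_1>2$, $(L_{p_1,q_1}\cap L_2)(0,\infty)\hookrightarrow \mathcal{C}_{p_2,q_2}$ and $q_2\neq p_1$. The plan is to feed Lemma \ref{new verification lemma} into Lemma \ref{fs lemma} with the ideal $\mathcal{I}=\mathcal{C}_{p_2,q_2}$, and then pick $f$ so that the resulting inequality $\Delta\le C\Theta$ fails.

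First, reduce to the three-dimensional setting. The measure spaces $(0,\infty)^3$ and $(0,\infty)$ are measure-isomorphic (both are non-atomic, $\sigma$-finite and of infinite measure). A measure-preserving isomorphism therefore induces an isometry
$$(L_{p_1,q_1}\cap L_2)((0,\infty)^3)\approx (L_{p_1,q_1}\cap L_2)(0,\infty),$$
since both quasi-norms in \eqref{intersectionnorm} depend only on $\mu(x)$. Composing with the assumed embedding and rescaling, we obtain an isomorphic embedding $T$ as in Lemma \ref{new verification lemma} with $\|T\|=1$.

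Next, check the hypotheses of Lemma \ref{fs lemma}. The ideal $\mathcal{C}_{p_2,q_2}$ is separable, since $0<p_2,q_2<\infty$. It has the Fatou property, because $\ell_{p_2,q_2}$ does: the quasi-norm is given by a monotone integral of $\mu$, so this follows by monotone convergence. It possesses property ($q_2$) by Lemma \ref{CD lemma}. Now fix any $f\in L_\infty(0,1)$. By Lemma \ref{new verification lemma}, the family $x_j^{(M)}$ satisfies \eqref{sfla} and \eqref{sflb} with $p=p_1$, $q=q_2$, $\Theta=\|f\|_2$, $\Delta=\|f\|_{p_1,q_1}$, and with $c$ independent of $f$. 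Boundedness of the family follows from \eqref{sfla} applied to singletons, since $S(A)\le 1$. As $q_2\ne p_1$, Lemma \ref{fs lemma} yields
$$\|f\|_{L_{p_1,q_1}(0,1)}\le C\,\|f\|_{L_2(0,1)}\quad \text{for all } f\in L_\infty(0,1),$$
with $C=C_{c,p_1,\mathcal{I}}$ independent of $f$.

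Finally, this inequality contradicts $p_1>2$. Take $f=\chi_{(0,t)}$ with $t\to0$: then $\|f\|_{p_1,q_1}=t^{1/p_1}$ while $\|f\|_2=t^{1/2}$. The ratio $t^{1/p_1-1/2}$ tends to $\infty$ because $1/p_1<1/2$. Hence $q_2=p_1$.

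The main obstacle is the uniformity of the constant in Lemma \ref{fs lemma}. It must depend only on $c,p,\mathcal{I}$ and not on $f$, and this holds as stated there. Some care is also needed about $c$: it must be the same for every $f$, which is true since it is determined by $T^{-1}$ alone. Everything else is routine verification.
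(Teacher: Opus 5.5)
Your proposal is correct and follows the paper's own proof. You transfer to $(L_{p_1,q_1}\cap L_2)((0,\infty)^3)$, use Lemma~\ref{new verification lemma} to verify the hypotheses of Lemma~\ref{fs lemma} with $\mathcal{I}=\mathcal{C}_{p_2,q_2}$, $p=p_1$, $q=q_2$, and obtain $\|f\|_{p_1,q_1}\le C\|f\|_2$ uniformly in $f\in L_\infty(0,1)$, which fails when $p_1>2$; your extra checks (the measure isomorphism $(0,\infty)^3\cong(0,\infty)$, separability, the Fatou property, property ($q_2$) via Lemma~\ref{CD lemma}, and the test functions $\chi_{(0,t)}$) just make explicit what the paper leaves implicit.
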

\begin{proof} Fix an isomorphic embedding
$$T:(L_{p_1,q_1}\cap L_2)((0,\infty)^3)\to \mathcal{C}_{p_2,q_2}.$$
Without loss of generality, we may assume that 
$$\left\|T\right\|_{(L_{p_1,q_1}\cap L_2)((0,\infty)^3)\to  \mathcal{C}_{p_2,q_2}}=1.$$
Denote for brevity $V:=T((L_{p_1,q_1}\cap L_2)((0,\infty)^3))\subset  \cC_{p_2,q_2}$,  
$$c_T=\left\|T^{-1}\right\|_{V\to (L_{p_1,q_1}\cap L_2)((0,\infty)^3)}^{-1}$$
and $c_{p_2,q_2}=C_{\mathcal{C}_{p_2,q_2}}$ (the constant for the quasi-triangular inequality of $\cC_{p_2,q_2}$).

Let $f\in L_{\infty}(0,1).$ By Lemma \ref{new verification lemma}, the sequence
$$\left \{x_{ j}^{(M)} \right \}_{M,j\geq1}=\Big\{T\left(f\otimes\chi_{(j,j+1)}\otimes\chi_{(M,M+\frac1M)}\right)\Big\}_{M,j\geq1}$$
satisfies the assumptions \eqref{sfla} and \eqref{sflb} in Lemma \ref{fs lemma} with $p=p_1,$ $q=q_2,$ $\Theta=\left\|f\right\|_2,$ $\Delta=\left\|f\right\|_{p_1,q_1}$ and $c=c_T.$ If $p_1\neq q_2,$ then it follows from Lemma \ref{fs lemma} (with $\cI=\cC_{p_2,q_2}$) that
$$\left\|f\right\|_{p_1,q_1}=\Delta\leq C_{c_T,p_1,\mathcal{C}_{p_2,q_2}}\Theta=C_{c_T,p_1,\mathcal{C}_{p_2,q_2}}\left\|f\right\|_2.$$
As $f\in L_{\infty}(0,1)$ is arbitrary, the latter inequality is impossible. This shows that $p_1\neq q_2$ is impossible.
\end{proof}

\begin{proof}[Proof of Theorem \ref{main thm} for $p_1>2$] Suppose $L_{p_1,q_1}(0,1)\hookrightarrow\mathcal{C}_{p_2,q_2}.$
	
By Lemma \ref{estimate for Kf} and Lemma \ref{boundedness of K}, the Kruglov operator $K$ is bounded from $L_{p_1,q_1}(0,1)$ into $ L_{p_1,q_1}(0,1).$ By Theorem \ref{AS thm}, the space $Z_{L_{p_1,q_1}}^2(0,\infty)=(L_{p_1,q_1}\cap L_2)(0,\infty)$ embeds isomorphically into $L_{p_1,q_1}(0,1).$ Hence, there exists an isomorphic embedding of the space $(L_{p_1,q_1}\cap L_2)(0,\infty)$ into $\mathcal{C}_{p_2,q_2}.$ Using Lemma \ref{final lemma}, we conclude that $p_1=q_2.$

By Theorem \ref{easy thm}, we either have $p_1=q_1=2$ or $q_1=q_2.$ By the preceding paragraph and the assumption that $p_1>2$, we have   $p_1=q_1=q_2>2.$
\end{proof}

\section{Proof of Theorem \ref{main thm} for $p_1=2$}
In \cite{AHS}, it is proved that $L_{2,q}(0,1)\not\hookrightarrow C_{2,q}$ when $1\le q\ne 2 <\infty$.
In this section, using different approaches, 
we extend this result to a much more general setting that $L_{2,q_1}(0,1)\not\rightarrow C_{p_2,q_2}$ when $0< q_1\ne 2<\infty $. 
 
\begin{lem}\label{general new lemma} If $Z_{L_{2,q}}^2(0,\infty)\hookrightarrow\mathcal{C}_{p,q},$
then, for every $f\in L_{2,q}(0,1),$ there exists an element $a\in \ell_{p,q}$ such that either
$$\left\|a\otimes{\bm \alpha}\right\|_{\ell_{p,q}}+\left\|{\bm \alpha}\right\|_{\ell_q}\lesssim \left\|f\otimes{\bm \alpha}\right\|_{Z_{L_{2,q}}^2((0,\infty)\times\mathbb{Z}_+)}+\left\|{\bm \alpha}\right\|_{\ell_2}\lesssim \left\|{\bm \alpha}\right\|_{\ell_2}+\left\|a\otimes{\bm \alpha}\right\|_{\ell_{p,q}}+\left\|{\bm \alpha}\right\|_{\ell_q}$$
or
$$\left\|a\otimes{\bm \alpha}\right\|_{\ell_{p,q}}\lesssim \left\|f\otimes{\bm \alpha}\right\|_{Z_{L_{2,q}}^2((0,\infty)\times\mathbb{Z}_+)}+\left\|{\bm \alpha}\right\|_{\ell_2}\lesssim \left\|{\bm \alpha}\right\|_{\ell_2}+\left\|a\otimes{\bm \alpha}\right\|_{\ell_{p,q}}$$
for all finite sequence ${\bm \alpha}$\footnote{For ${\bm \alpha}=(\alpha_1,\alpha_2,\cdots, \alpha_n,0,\cdots)$, we identify $a\otimes {\rm \alpha}$ with the direct sum $\oplus_{i=1}^n \alpha_i a $.}.
\end{lem}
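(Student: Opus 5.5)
The plan is to run the machinery of Section \ref{first case section} once more, now for a single function $f$ and with weights ${\bm \alpha}$, and to read off the two-sided estimate from the decomposition in Theorem \ref{fedor decomposition theorem}. Fix an isomorphic embedding $T:Z_{L_{2,q}}^2((0,\infty)\times\mathbb{Z}_+)\to\mathcal{C}_{p,q}$; this domain is isomorphic to $Z_{L_{2,q}}^2(0,\infty)$ by a measure-preserving identification. Let $f\in L_{2,q}(0,1)$, first bounded. Put
$$x_j^{(M)}=T\bigl(f\otimes\chi_{(j,j+1)}\otimes\chi_{(M,M+\frac1M)}\bigr)$$
as in Lemma \ref{new verification lemma}. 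Then $x^{(M)}_j\to0$ in the weak operator topology as $j\to\infty$, by the same duality argument used there for $L_r\cap L_2$ with $r>2$.

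For each $M$, apply Theorem \ref{fedor decomposition theorem} with the rates of Remark \ref{fedor decomposition remark}. Form $A_M=\sum_{m\le M}a_m^{(M)}$ and define $B_M,C_M,D_M$ in the same way. Exactly as in the proof of Lemma \ref{fs lemma}, Theorem \ref{limit point theorem} and Lemma \ref{main corollary} give
$$\|B_M\|,\ \|C_M\|,\ \|D_M\|\lesssim\|f\|_2.$$
The $A_M$ are bounded in $\mathcal{C}_{p,q}$. By Lemma \ref{aux corollary}, $\|A_M\|_\infty\lesssim M^{-1/2}\|f\|_2\to0$. The $A_M$ are block diagonal, built from the blocks $p_m^{(M)}x p_m^{(M)}$.

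Next I split into two cases. If $\inf_M\|A_M\|_{\mathcal{C}_{p,q}}=0$, pass to a subsequence with $A_M\to0$ fast. If the infimum is positive, Lemma \ref{CD lemma} gives a subsequence with $\|\sum_n{\bm \alpha}(n)A_{M_n}\|\approx\|{\bm \alpha}\|_{\ell_q}$. This dichotomy is what produces the two alternatives in the statement: the $\|{\bm \alpha}\|_{\ell_q}$ term appears only in the second case.

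To obtain the element $a$, the natural candidate is a rearrangement limit of $\mu(A_M)$, or more simply of the $B$- and $C$-parts. By Theorem \ref{fedor decomposition theorem}(c), $|B_M|$ behaves like $M^{1/2}y_M^{1/2}$, which is bounded in $\mathcal{C}_{p,q}$. I would choose the $M$'s, by a diagonal argument, so that the operators $M^{1/2}y_M^{1/2}$ converge, after unitary conjugation, to a fixed operator $a$ in $\mathcal{C}_{p,q}$. Here I identify $a$ with its singular value sequence in $\ell_{p,q}$; compactness comes from the Fatou property together with weak-operator compactness as in the proof of Theorem \ref{fedor decomposition theorem}. The same is done for $z_M$, and its contribution can be absorbed into $a$ by taking a direct sum.

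Then, for finite ${\bm \alpha}$, consider $\sum_n{\bm \alpha}(n)\sum_{m\le M_n}x^{(M_n)}_{j_m}$, which by the embedding has quasi-norm $\approx\|f\otimes{\bm \alpha}\|_{Z^2_{L_{2,q}}}$. Decompose it into the $A$-, $B$-, $C$- and $D$-parts and arrange the blocks for different $n$ to be mutually disjoint. The $B$-part is then left-disjoint, so its modulus is $(\sum|{\bm \alpha}(n)|^2|B_{M_n}|^2)^{1/2}$. This is comparable to $a\otimes{\bm \alpha}$ in $\mathcal{C}_{p,q}$, up to an $\ell_2$-error controlled as in Lemma \ref{main corollary}. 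The $C$-part is handled similarly, and the $D$-part is $\lesssim\|{\bm \alpha}\|_2$.

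The $A$-part contributes $\approx\|{\bm \alpha}\|_{\ell_q}$ in the second case and is negligible in the first. Since the $A$-blocks are diagonal and disjoint from the $B$- and $C$-blocks, lower bounds of the form
$$\|X\|\gtrsim\|P_{\mathrm{diag}}X\|,\ \|(\text{off-diagonal column part})\|$$
follow from contractivity of compressions. Together with the quasi-triangle inequality and the additive $\|{\bm \alpha}\|_{\ell_2}$ slack, this gives the stated chains of inequalities. For general $f\in L_{2,q}(0,1)$ I would approximate by bounded functions.

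The main obstacle is the lower estimate for the $B$- and $C$-parts. A left-disjoint sum of $b$'s has quasi-norm equal to that of $(\sum|{\bm \alpha}(n)|^2 y_{M_n}M_n)^{1/2}$, and the $y_{M_n}$ need not be disjoint from one another. So identifying this quantity with $\|a\otimes{\bm \alpha}\|_{\ell_{p,q}}$ requires choosing the subsequence so that the $y_M$'s become almost disjoint, for example via Lemma \ref{equivalent sequence lemma}, or else replacing them by their common limit $a$ with errors summable in ${\bm \alpha}$. I would try the common-limit route first, using Lemma \ref{bks lemma} to control $|\cdot|$ against squares.
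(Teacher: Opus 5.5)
There is a genuine gap at the central step: producing $a$ and the term $\|a\otimes{\bm \alpha}\|_{\ell_{p,q}}$. You flag this as the main obstacle, but neither of your two routes closes it.

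Your $B$-part is a left-disjoint, column-type sum, so its modulus is $\bigl(\sum_n|{\bm \alpha}(n)|^2|B_{M_n}|^2\bigr)^{1/2}$. Take first the common-limit route. If the $|B_{M_n}|$ approach a common operator $a$, this modulus is $\approx\|{\bm \alpha}\|_{\ell_2}\,a$, with quasi-norm $\approx\|{\bm \alpha}\|_{\ell_2}\|a\|_{\mathcal{C}_{p,q}}$. That is exactly the computation in Lemma \ref{main corollary}. It gives an $\ell_2$-error term, not $\|\bigoplus_n{\bm \alpha}(n)a\|_{\ell_{p,q}}$. A direct sum appears only when the pieces are disjoint from both sides.

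The disjointness route has no mechanism either. Nothing makes the operators $M^{1/2}y_M^{1/2}$ mutually disjoint. They are bounded but not uniformly null, so Lemma \ref{equivalent sequence lemma} does not apply.

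The compactness you invoke is also too weak. WOT-compactness and the Fatou property give a WOT limit point with a norm bound. Boundedness in $\ell_{p,q}$ gives uniform convergence of singular value sequences along a subsequence. Neither gives convergence in $\mathcal{C}_{p,q}$. Convergence ``after unitary conjugation'' controls only $\mu(M^{1/2}y_M^{1/2})$, which does not determine $\sum_n|{\bm \alpha}(n)|^2|B_{M_n}|^2$.

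Finally, the $B_{M_n}$ come from separate applications of Theorem \ref{fedor decomposition theorem}, one for each $M$. So they are not left-disjoint from one another. The only available bound on $\|\sum_n{\bm \alpha}(n)B_{M_n}\|$ is the quasi-triangle estimate, which grows polynomially in the number of terms. That suffices in Lemma \ref{fs lemma}, where $N$ is fixed, but not for a two-sided inequality valid for all ${\bm \alpha}$.

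The root cause is the double-indexed family. Pieces of measure $1/M$ force $\|A_M\|_\infty\to0$, so by Lemma \ref{CD lemma} the diagonal part can only produce $\ell_q$-behaviour, and nothing is left to carry $a$.

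The paper instead decomposes the single WOT-null sequence $x_j=T(f\otimes\chi_{(j,j+1)})$, whose terms are not uniformly small.
\begin{itemize}
\item All of $b_m,c_m,d_m$ become errors of size $\lesssim\|{\bm \alpha}\|_{\ell_2}$ by Lemma \ref{main corollary}. Here $\|y^{1/2}\|,\|z^{1/2}\|\le\sup_j\|x_j\|$ by the Fatou property, so neither $f\in L_2$ nor Theorem \ref{limit point theorem} is needed.
\item The diagonal blocks $a_m=p_mx_{j_m}p_m$ are mutually orthogonal, so $\|\sum_m{\bm \alpha}(m)a_m\|_{\mathcal{C}_{p,q}}=\|\bigoplus_m{\bm \alpha}(m)\mu(a_m)\|_{\ell_{p,q}}$ exactly.
\item One passes to a subsequence with $\mu(a_m)\to a$ uniformly, so $a\in\ell_{p,q}$ by Fatou. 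Then Lemma \ref{original CD lemma} is applied to $\mu(a_m)-a$.
\item The dichotomy $\|\mu(a_m)-a\|_{\ell_{p,q}}\to\delta>0$ versus $\delta=0$ produces the $\|{\bm \alpha}\|_{\ell_q}$ term or its absence. This is where the two alternatives come from.
\end{itemize}

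Two smaller points also need repair.
\begin{itemize}
\item For $q>2$ the lemma is applied (in Lemma \ref{p=2 final lemma}) to $f\in L_{2,q}\setminus L_2$, where your $\Theta=\|f\|_2$ is infinite. Approximating by bounded $f$ cannot pass to the limit, because $a$ and the implicit constants depend on $\|f\|_2$.
\item The pinching $X\mapsto\sum_m p_mXp_m$ is unbounded on $\mathcal{C}_{p,q}$ when $p<1$. So ``contractivity of compressions'' must be replaced by the quasi-triangle inequality, as in the paper.
\end{itemize}
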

\begin{proof} Fix an isomorphic embedding $T:Z_{L_{2,q}}^2((0,\infty)\times\mathbb{Z}_+)\to\mathcal{C}_{p,q}.$ 
Without loss of generality, we may assume that 
$$\left\|T\right\|_{Z_{L_{2,q}}^2((0,\infty)\times\mathbb{Z}_+)\to  \mathcal{C}_{p,q}}=1.$$
Denote for brevity $V:=T(Z_{L_{2,q}}^2((0,\infty)\times\mathbb{Z}_+)) \subset C_{p,q}$,  
$$c_T=\left\|T^{-1}\right\|_{V \to Z_{L_{2,q}}^2((0,\infty)\times\mathbb{Z}_+)}^{-1}$$
and $c_{p,q}=C_{\mathcal{C}_{p,q}}$ (the constant for the quasi-triangular inequality of $\cC_{p,q}$).
	
Take $f\in L_{2,q}(0,1)$ and set $x_j=T(f\otimes\chi_{(j,j+1)}),$ $j\geq1.$ By Theorem \ref{fedor decomposition theorem}, we have
$$x_{j_m}=a_m+b_m+c_m+d_m.$$
To lighten the notations, we assume that $\left\|y^{\frac12}\right\|_{\cC_{p,q}},\left\|z^{\frac12}\right\|_{\cC_{p,q}}\leq 1$ (here, $y$ and $z$ are given by Theorem \ref{fedor decomposition theorem}).

By the quasi-triangular inequality, we have 
\begin{eqnarray*}
& &c_{p,q}^{-2}\left\|
\sum_{m\geq1}{\bm \alpha}(m)\mu(a_m)\otimes e_m\right\|_{p,q}\\&=&c_{p,q}^{-2}\left\|\sum_{m\geq1}{\bm \alpha}(m)a_m\right\|_{\ell_{p,q}}\\
&=&c_{p,q}^{-2}\left\|\sum_{m\geq1}{\bm \alpha}(m)x_{j_m}-\sum_{m\geq1}{\bm \alpha}(m)b_m-\sum_{m\geq1}{\bm \alpha}(m)c_m-\sum_{m\geq1}{\bm \alpha}(m)d_m\right\|_{\cC_{p,q}}\\
&\leq&\left\|\sum_{m\geq1}{\bm \alpha}(m)x_{j_m}\right\|_{\cC_{p,q}}+\left\|\sum_{m\geq1}{\bm \alpha}(m)b_m\right\|_{\cC_{p,q}}+\left\|\sum_{m\geq1}{\bm \alpha}(m)c_m\right\|_{\cC_{p,q}}+\left\|\sum_{m\geq1}{\bm \alpha}(m)d_m\right\|_{\cC_{p,q}}
\end{eqnarray*}
and
\begin{eqnarray*}
& &c_{p,q}^{-2}\left\|\sum_{m\geq1}{\bm \alpha}(m)x_{j_m}\right\|_{\cC_{p,q}}\\
&=&c_{p,q}^{-2}\left\|\sum_{m\geq1}{\bm \alpha}(m)a_m+\sum_{m\geq1}{\bm \alpha}(m)b_m+\sum_{m\geq1}{\bm \alpha}(m)c_m+\sum_{m\geq1}{\bm \alpha}(m)d_m\right\|_{\cC_{p,q}}\\
&\leq& \left\|\sum_{m\geq1}{\bm \alpha}(m)a_m\right\|_{\cC_{p,q}}+\left\|\sum_{m\geq1}{\bm \alpha}(m)b_m\right\|_{\cC_{p,q}}+\left\|\sum_{m\geq1}{\bm \alpha}(m)c_m\right\|_{\cC_{p,q}}+\left\|\sum_{m\geq1}{\bm \alpha}(m)d_m\right\|_{\cC_{p,q}}.
\end{eqnarray*}
 Moreover, by the definition of $x_j$, we have
$$c_T\left\|f\otimes{\bm \alpha}\right\|_{Z_{L_{2,q}}^2((0,\infty)\times\mathbb{Z}_+)}\leq \left\|\sum_{m\geq1}{\bm \alpha}(m)x_{j_m}\right\|_{\cC_{p,q}}\leq \left\|f\otimes{\bm \alpha}\right\|_{Z_{L_{2,q}}^2((0,\infty)\times\mathbb{Z}_+)}.$$
By Lemma \ref{main corollary} (taking $\epsilon \le 1$), 
$$\left\|\sum_{m\geq1}{\bm \alpha}(m)b_m\right\|_{\cC_{p,q}},\left\|\sum_{m\geq1}{\bm \alpha}(m)c_m\right\|_{\cC_{p,q}},\left\|\sum_{m\geq1}{\bm \alpha}(m)d_m\right\|_{\cC_{p,q}}\leq 2c_{p,q}\left\|{\bm \alpha}\right\|_{\ell_2}.$$
Thus, combining estimates above, we have
$$\left\|\sum_{m\geq1}{\bm \alpha}(m)\mu(a_m)\otimes e_m\right\|_{\ell_{p,q}}\leq c_{p,q}^2\left(\left\|f\otimes{\bm \alpha}\right\|_{Z_{L_{2,q}}^2\left((0,\infty)\times\mathbb{Z}_+ \right)}+6c_{p,q}\left\|{\bm \alpha}\right\|_{\ell_2}\right)$$
and
\begin{align}\label{f tensor}
c_T\left\|f\otimes{\bm \alpha}\right\|_{Z_{L_{2,q}}^2((0,\infty)\times\mathbb{Z}_+)}\leq c_{p,q}^2\left( \left\|\sum_{m\geq1}{\bm \alpha}(m)\mu(a_m)\otimes e_m\right\|_{p,q}+6c_{p,q}\left\|{\bm \alpha}\right\|_2\right).
\end{align}

Passing to a subsequence if necessary, 
there exists an element $a\in \ell_{p,q}$ such that   $\mu(a_m)\to a$ in the uniform norm as $m\to \infty$ \footnote{ 
Note that $\mu(t; a_m)\lesssim \sup\norm{x_{j_m}}_{\cC_{p,q}} t^{-1/p}$, $t> 0$ \cite[Chapter 4, Proposition 4.2]{BS}, for all $m\ge 1$.
Observing  that  $\ell_{p,q}\subset c_0$ and applying 
\cite[Proposition 5.2]{DPS2016} to $\{ \{\mu(k;a_m)  \}_{k=0}^\infty \}_{m\ge 1}\subset c_0$, we obtain the existence of $a$ in $c_0$. By the Fatou property of $\ell_{p,q}$, $a\in\ell_{p,q}$. }. By the Fatou property of $\cC_{p,q}$, we have 
\begin{align}\label{a tensor}\begin{split}
\left\|a\otimes{\bm \alpha}\right\|_{\ell_{p,q}}&=\left\|\sum_{m\geq1}{\bm \alpha}(m)a\otimes e_m\right\|_{\ell_{p,q}}\\
&\leq  c_{p,q}^2\left(\left\|f\otimes{\bm \alpha}\right\|_{Z_{L_{2,q}}^2((0,\infty)\times\mathbb{Z}_+)}+6c_{p,q}\left\|{\bm \alpha}\right\|_2\right),
\end{split}
\end{align}
 which proves the left hand side of the second inequality in the assertion. 

Denote $y_m=\mu(a_m)-a,$ $m\geq1.$ By the quasi-triangular  inequality, we have 
\begin{align}\label{ym tensor}
\left\|\sum_{m\geq1}{\bm \alpha}(m)y_m\otimes e_m\right\|_{\ell_{p,q}}\leq 2c_{p,q}^3\Big(\|f\otimes{\bm \alpha}\|_{Z_{L_{2,q}}^2((0,\infty)\times\mathbb{Z}_+)}+6c_{p,q}\left\|{\bm \alpha}\right\|_{\ell_2}\Big).
\end{align}
By construction, $y_m\to0$ in the uniform norm as $m\to \infty $. Passing to a subsequence if necessary, we may assume that there exists a non-negative number  $\delta$ such that $\left\|y_m\right\|_{\ell_{p,q}}\to \delta$ as $m\to\infty$.  Passing to a further subsequence and using Lemma \ref{original CD lemma}, we obtain
\begin{enumerate}
\item $$(2c_{p,q}^{(1)})^{-1}\left\|{\bm \alpha}\right\|_{\ell_q}\cdot\delta\leq\left\|\sum_{m\geq1}{\bm \alpha}(m)y_m\otimes e_m\right\|_{\ell_{p,q}}\leq 2c_{p,q}^{(1)}\left\|{\bm \alpha}\right\|_{\ell_q}\cdot\delta$$
if $\delta>0$;
\item and 
$$\left\|\sum_{m\geq1}{\bm \alpha}(m)y_m\otimes e_m\right\|_{\ell_{p,q}}\leq\left\|{\bm \alpha}\right\|_{\infty}$$
if $\delta=0.$
\end{enumerate}
  This together with \eqref{a tensor} and \eqref{ym tensor} yields
  that
  \begin{align*}
&\quad   \left\|a\otimes{\bm \alpha}\right\|_{\ell_{p,q}}+\left\|{\bm \alpha}\right\|_{\ell_q}\\
 & \le c_{p,q}^2\left(\left\|f\otimes{\bm \alpha}\right\|_{Z_{L_{2,q}}^2((0,\infty)\times\mathbb{Z}_+)}+6c_{p,q}\left\|{\bm \alpha}\right\|_2\right)  +    \frac{2c_{p,q}^{(1)}}{\delta } \left\|\sum_{m\geq1}{\bm \alpha}(m)y_m\otimes e_m\right\|_{\ell_{p,q}}  \\
  &\lesssim \left\|f\otimes{\bm \alpha}\right\|_{Z_{L_{2,q}}^2((0,\infty)\times\mathbb{Z}_+)}+\left\|{\bm \alpha}\right\|_{\ell_2} , 
  \end{align*}
i.e.,   the left hand side of the first inequality in the assertion is proved.

By the quasi-triangular inequality of $\ell_{p,q} $ (see e.g. \cite{BS}), we have
\begin{enumerate}
\item \begin{align*}
\left\|\sum_{m\geq1}{\bm \alpha}(m)\mu(a_m)\otimes e_m\right\|_{\ell_{p,q}}&\leq c_{p,q}\left(\left\|\sum_{m\geq1}{\bm \alpha}(m)y_m\otimes e_m\right\|_{\ell_{p,q}}+\left\|\sum_{m\geq1}{\bm \alpha}(m)a\otimes e_m\right\|_{\ell_{p,q}}\right)\\
&\leq c_{p,q}\left(2c_{p,q}^{(1)}\left\|{\bm \alpha}\right\|_{\ell_q}\cdot\delta+\left\|a\otimes{\bm \alpha}\right\|_{\ell_{p,q}}\right)
\end{align*}
if $\delta>0$;
\item and
$$\left\|\sum_{m\geq1}{\bm \alpha}(m)\mu(a_m)\otimes e_m\right\|_{\ell_{p,q}}\leq c_{p,q}\Big(\left\|{\bm \alpha}\right\|_{\infty}+\left\|a\otimes{\bm \alpha}\right\|_{\ell_{p,q}}\Big)$$
if $\delta=0.$
\end{enumerate}
  Hence, by \eqref{f tensor}, we have 
\begin{enumerate}
\item $$c_T\|f\otimes{\bm \alpha}\|_{Z_{L_{2,q}}^2((0,\infty)\times\mathbb{Z}_+)}\leq c_{p,q}^3\Big(2c_{p,q}^{(1)}\left\|{\bm \alpha}\right\|_{\ell_q}\cdot\delta+\|a\otimes{\bm \alpha}\|_{\ell_{p,q}}+6\left\|{\bm \alpha}\right\|_{\ell_2}\Big)$$
if $\delta>0$;
\item and
$$c_T\|f\otimes{\bm \alpha}\|_{Z_{L_{2,q}}^2((0,\infty)\times\mathbb{Z}_+)}\leq c_{p,q}^3\Big(\left\|{\bm \alpha}\right\|_{\infty}+\|a\otimes{\bm \alpha}\|_{\ell_{p,q}}+6\left\|{\bm \alpha}\right\|_{\ell_2}\Big)$$
if $\delta=0.$
\end{enumerate}  
In particular, these two inequalities yield    the right hand sides of the  inequalities in the assertion.
\end{proof}

\begin{lem}\label{p=2 compute lemma} Let $p>0$ and $q>2.$ If $f\in L_{2,q}(0,1)$ and $a\in \ell_{p,q}$ are such that
\begin{align}
\left\|a\otimes{\bm \alpha}\right\|_{\ell_{p,q}}
&\lesssim \left\|f\otimes{\bm \alpha}\right\|_{L_{2,q}}+\left\|f\otimes{\bm \alpha}\right\|_{L_1+L_2}+\left\|{\bm \alpha}\right\|_{\ell_2}\label{72eq1}\\
&\lesssim\left\|{\bm \alpha}\right\|_{\ell_2}+\left\|a\otimes{\bm \alpha}\right\|_{\ell_{p,q}}+\left\|{\bm \alpha}\right\|_{\ell_q}\label{72eq2}
\end{align}
for all finite sequence ${\bm \alpha}$,
then $f\in L_2(0,1).$
\end{lem}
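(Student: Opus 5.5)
The plan is to test the two-sided estimate \eqref{72eq1}--\eqref{72eq2} on the flat sequences ${\bm \alpha}=\chi_{\{1,\dots,N\}}$ and let $N\to\infty$. Only \eqref{72eq2} yields the conclusion; \eqref{72eq1} is used solely to rule out the case $p<2$, $a\ne0$. The terms to compute are the following.

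\textbf{Sequence side.} We have $\mu(a\otimes{\bm \alpha})=\sigma_N\mu(a)$, with $\sigma_N$ the sequence dilation from Section~\ref{s:pre}. A direct computation with $\int\mu^q\,dt^{q/p}$ gives
$$\left\|a\otimes{\bm \alpha}\right\|_{\ell_{p,q}}\asymp N^{\frac1p}\left\|a\right\|_{\ell_{p,q}},$$
with constants depending only on $p,q$. Also $\left\|{\bm \alpha}\right\|_{\ell_2}=N^{\frac12}$ and $\left\|{\bm \alpha}\right\|_{\ell_q}=N^{\frac1q}$.

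\textbf{Function side.} The function $f\otimes{\bm \alpha}$ is equimeasurable with $\sigma_Nf$ on $(0,N)$, so $\left\|f\otimes{\bm \alpha}\right\|_{L_{2,q}}=N^{\frac12}\left\|f\right\|_{L_{2,q}}$. For the $L_1+L_2$ part I use the standard equivalence
$$\left\|g\right\|_{L_1+L_2}\asymp\int_0^1\mu(g)+\Big(\int_1^\infty\mu(g)^2\Big)^{\frac12}.$$
This gives
$$\left\|f\otimes{\bm \alpha}\right\|_{L_1+L_2}\asymp N\int_0^{1/N}\mu(f)+N^{\frac12}\Big(\int_{1/N}^1\mu(f)^2\Big)^{\frac12}.$$

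\textbf{Step 1: the case $p<2$ forces $a=0$.} Using $\mu(t;f)\lesssim t^{-1/2}\left\|f\right\|_{L_{2,q}}$, the middle quantity in \eqref{72eq1} is at most $\lesssim N^{\frac12}(1+\log N)^{\frac12}$. The left side of \eqref{72eq1} is $\asymp N^{\frac1p}\left\|a\right\|_{\ell_{p,q}}$. If $p<2$, letting $N\to\infty$ therefore forces $a=0$.

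\textbf{Step 2: the main bound from \eqref{72eq2}.} Discarding all but the $L_2$-tail part of the middle quantity and using the right-hand inequality \eqref{72eq2}, I get
$$N^{\frac12}\Big(\int_{1/N}^1\mu(f)^2\Big)^{\frac12}\lesssim N^{\frac12}+N^{\frac1p}\left\|a\right\|_{\ell_{p,q}}+N^{\frac1q}.$$
Divide by $N^{\frac12}$. The term $N^{\frac1q-\frac12}$ is bounded because $q>2$; this is exactly where the hypothesis $q>2$ enters. The term $N^{\frac1p-\frac12}\left\|a\right\|_{\ell_{p,q}}$ is bounded when $p\ge2$, and it vanishes when $p<2$ by Step 1.

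\textbf{Conclusion.} Hence $\int_{1/N}^1\mu(f)^2\le C$ uniformly in $N$. By monotone convergence, $\int_0^1\mu(f)^2<\infty$, that is, $f\in L_2(0,1)$.

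\textbf{Main obstacle.} The delicate point is the interplay of exponents when $p<2$. There the term $N^{\frac1p}\left\|a\right\|_{\ell_{p,q}}$ in \eqref{72eq2} would dominate $N^{\frac12}$, and it can only be removed through the lower bound \eqref{72eq1}. That in turn requires controlling the $L_1+L_2$-norm of $f\otimes{\bm \alpha}$ by $N^{\frac12}$ up to a logarithmic factor, with no a priori $L_2$ information on $f$. The weak-type bound $\mu(t;f)\lesssim t^{-1/2}\left\|f\right\|_{L_{2,q}}$ (\cite[Chapter 4, Proposition 4.2]{BS}) should suffice, since the gap between $N^{\frac1p}$ and $N^{\frac12}\sqrt{\log N}$ is polynomial. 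The remaining ingredients are routine:
\begin{itemize}
\item the dilation estimate for $\ell_{p,q}$;
\item the equivalent expression for the $L_1+L_2$ norm.
\end{itemize}
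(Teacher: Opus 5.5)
Your proposal is correct and follows essentially the same route as the paper. It tests \eqref{72eq1}--\eqref{72eq2} on ${\bm \alpha}=\chi_{\{1,\dots,N\}}$, uses the weak-type bound $\mu(t;f)\lesssim t^{-1/2}\|f\|_{L_{2,q}}$ to get $\|\sigma_N f\|_{L_1+L_2}=O(N^{1/2}\log^{1/2}N)$ and hence $a=0$ when $p<2$, and then reads off $\sup_N\big(\int_{1/N}^1\mu(s;f)^2ds\big)^{1/2}<\infty$ from \eqref{72eq2}, where $q>2$ is needed. The only cosmetic difference is that for $p<2$ the paper uses the lower bound $\|a\|_\infty\|{\bm \alpha}\|_{\ell_{p,q}}\le\|a\otimes{\bm \alpha}\|_{\ell_{p,q}}$ instead of your dilation identity $\|a\otimes{\bm \alpha}\|_{\ell_{p,q}}\asymp N^{1/p}\|a\|_{\ell_{p,q}}$.
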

\begin{proof} Let $p\geq 2.$
For all finite scalar sequences ${\bm \alpha}$,   we have
$$\left\|f\otimes{\bm \alpha}\right\|_{L_1+L_2}\stackrel{\eqref{72eq2}}{\lesssim} \left\|{\bm \alpha}\right\|_{\ell_2}+\left\|a\otimes{\bm \alpha}\right\|_{\ell_{p,q}} +\left\|{\bm \alpha}\right\|_{\ell_q}.$$
Setting ${\bm \alpha}=(\underbrace{1,\cdots,1}_{n-\rm times},0,\cdots)$ and dividing by $n^{\frac12},$ we write
$$n^{-\frac12}\left\|\sigma_nf\right\|_{L_1+L_2}\lesssim 1+n^{\frac1p-\frac12}\left\|a\right\|_{\ell_{p,q}}+n^{\frac1q-\frac12}.$$
In particular,
$$\sup_{n\geq1}n^{-\frac12}\left\|\sigma_nf\right\|_{L_1+L_2}<\infty.$$
Hence,
noting that 
$n^{-\frac12} \norm{\sigma_nf}_{L_1+L_2}
\gtrsim  n^{-\frac{1}{2}} \left( \int_1^\infty \mu(s;\sigma_nf )^2 ds  \right)^{\frac{1}{2}}
=  \Big(\int_{\frac1n}^1\mu(s;f)^2 ds\Big)^{\frac12} \to \norm{f}_{L_2}
$
as $n\to \infty $, we have 
 $f\in L_2(0,1).$ This proves the assertion for the case when $p\geq 2.$

Let $p<2.$ We have
$$\left\|a\right\|_{\infty}\left\|{\bm \alpha}\right\|_{\ell_{p,q}}\leq \left\|a\otimes{\bm \alpha}\right\|_{\ell_{p,q}}\stackrel{\eqref{72eq1}}{\lesssim} \left\|f\otimes{\bm \alpha}\right\|_{L_{2,q}}+\left\|f\otimes{\bm \alpha}\right\|_{L_1+L_2}+\left\|{\bm \alpha}\right\|_{\ell_2}.$$
Setting ${\bm \alpha}=(\underbrace{1,\cdots,1}_{n-\rm times},0,\cdots)$,  we write
$$n^{\frac1p}\left\|a\right\|_{\infty}\lesssim n^{\frac12}\left\|f\right\|_{L_{2,q}} + \left\|\sigma_nf\right\|_{L_1+L_2}+n^{\frac12}.$$
However, 
\begin{align*}
\left\|\sigma_nf\right\|_{L_1+L_2}& \approx \int_0^1\mu\left(\frac{s}{n};f\right)ds+\left(\int_1^{\infty}\mu\left(\frac{s}{n};f\right)^2 ds\right)^{\frac12}\\
&=n\int_0^{\frac1n}\mu(s;f)ds+n^{\frac12}\Big(\int_{\frac1n}^1\mu(s;f)^2 ds\Big)^{\frac12}\\
&\leq \left\|f\right\|_{L_{2,\infty}}\cdot\left( n\int_0^{\frac1n}s^{-\frac12}ds+n^{\frac12}\Big(\int_{\frac1n}^1\frac{ds}{s}\Big)^{\frac12}\right)=O\left(
n^{\frac12}\log^{\frac12}(n)
\right).
\end{align*}
It follows that
$$n^{\frac1p}\left\|a\right\|_{\infty}\lesssim O\left(
n^{\frac12}\log^{\frac12}(n)\right).$$
Hence, $a=0.$ 
By assumption, we have
$$\left\|f\otimes{\bm \alpha}\right\|_{L_1+L_2}\lesssim\left\|{\bm \alpha}\right\|_{\ell_2}+\left\|{\bm \alpha}\right\|_{\ell_q}.$$
Setting ${\bm \alpha}=(\underbrace{1,\cdots,1}_{n-\rm times},0,\cdots)$  and dividing by $n^{\frac12},$ we write
$$n^{-\frac12}\left\|\sigma_nf\right\|_{L_1+L_2}\lesssim 1+n^{\frac1q-\frac12}.$$
In particular,
$$\sup_{n\geq1}n^{-\frac12}\left\|\sigma_nf\right\|_{L_1+L_2}<\infty.$$
Hence, $f\in L_2(0,1).$ This completes the proof for the case when  $p<2.$
\end{proof}
The following lemma should be compared with \cite[Example 3.5]{JSZ20}. 
\begin{lem}\label{idea behind ahs} Let $q\in(0,2).$
There exists no positive number $c_q$ depending on $q$ only such that 
the inequality
$$\left\|f\otimes{\bm \alpha}\right\|_{L_{2,q}+L_{\infty}}\leq c_q\left\|f\right\|_{L_{2,q}}\left\|{\bm \alpha}\right\|_{\ell_2},\quad f\in L_{2,q}(0,1),\quad {\bm \alpha}\in \ell_2,$$
holds. 
\end{lem}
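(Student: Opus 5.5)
The plan is to disprove the inequality by exhibiting, for each $n\in\mathbb{N}$, a normalised $f_n\in L_{2,q}(0,1)$ and a finite sequence ${\bm \alpha}_n$ with $\left\|{\bm \alpha}_n\right\|_{\ell_2}\approx(\log n)^{\frac12}$ such that $\left\|f_n\otimes{\bm \alpha}_n\right\|_{L_{2,q}+L_\infty}\gtrsim(\log n)^{\frac1q}$. Since $q<2$, the ratio $(\log n)^{\frac1q-\frac12}$ tends to infinity, so no constant $c_q$ can work. Constant sequences ${\bm \alpha}=(1,\dots,1,0,\dots)$ will not do this: by dilation invariance of the $L_{2,q}$ quasi-norm one computes $\left\|\sigma_n\mu(f)\right\|_{L_{2,q}+L_\infty}\lesssim n^{\frac12}\left\|f\right\|_{L_{2,q}}$. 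The sequence must therefore itself be "$\ell_2$-critical", and I would take ${\bm \alpha}_n(k)=k^{-\frac12}$ for $1\le k\le n$ and ${\bm \alpha}_n(k)=0$ otherwise. For the function I would take $f_n=n^{\frac12}\chi_{(0,\frac1n)}$, which satisfies $\left\|f_n\right\|_{L_{2,q}}=\left\|\chi_{(0,1)}\right\|_{L_{2,q}}$, a constant independent of $n$, by dilation invariance of $L_{2,q}$.

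\textbf{Step 1 (computing the rearrangement).} The function $f_n\otimes{\bm \alpha}_n$ takes the value $n^{\frac12}k^{-\frac12}$ on a set of measure $\frac1n$, for $k=1,\dots,n$. Its decreasing rearrangement is therefore the step function equal to $(n/k)^{\frac12}$ on $(\frac{k-1}{n},\frac kn)$, which is comparable to $t^{-\frac12}$ on $(\frac1n,1)$ and vanishes beyond $1$.

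\textbf{Step 2 (lower bound for the sum quasi-norm).} I would use the standard estimate $\left\|g\right\|_{L_{2,q}+L_\infty}\gtrsim\left\|(\mu(g)-C\left\|g\right\|_{L_{2,q}+L_\infty})_+\chi_{(0,1)}\right\|_{L_{2,q}}$, or more simply the K-functional description $\left\|g\right\|_{L_{2,q}+L_\infty}\approx\left\|\mu(g)\chi_{(0,1)}\right\|_{L_{2,q}}$. It follows from $\mu(t;g_1+g_2)\le\mu(\frac t2;g_1)+\left\|g_2\right\|_\infty$, combined with the fact that the constant function $\left\|g_2\right\|_\infty\chi_{(0,1)}$ has bounded $L_{2,q}$ quasi-norm. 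Either version suffices. To avoid the additive constant, I would apply the estimate to the part where $\mu\ge 2C$, that is on $(\frac1n,\delta)$ for a fixed small $\delta$, which is harmless for a logarithmic lower bound.

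\textbf{Step 3 (the computation).} Finally,
$$\left\|t^{-\frac12}\chi_{(\frac1n,\delta)}\right\|_{L_{2,q}}^q\approx\int_{1/n}^{\delta}\left(t^{\frac12}t^{-\frac12}\right)^q\frac{dt}{t}=\log(\delta n),$$
so $\left\|f_n\otimes{\bm \alpha}_n\right\|_{L_{2,q}+L_\infty}\gtrsim(\log n)^{\frac1q}$. On the other hand, $c_q\left\|f_n\right\|_{L_{2,q}}\left\|{\bm \alpha}_n\right\|_{\ell_2}\approx(\log n)^{\frac12}$, and letting $n\to\infty$ gives the contradiction.

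The main (mild) obstacle is Step 2: justifying the lower bound for the $L_{2,q}+L_\infty$ quasi-norm in the quasi-Banach range $q<1$. There I would argue directly with an arbitrary decomposition $g=g_1+g_2$ and the rearrangement inequality above, rather than citing Banach-space K-functional formulas.
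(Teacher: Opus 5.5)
Your proof is correct. It uses the same core mechanism as the paper but instantiates it differently. In both arguments, concentrating $f$ on $(0,\frac1n)$ turns $\mu(f\otimes{\bm \alpha})$ into a step function on $(0,1)$. The inequality would then force $\|g\|_{L_{2,q}}\lesssim\|g\|_{L_2}$ for such step functions, which is impossible for $q<2$.

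The routes differ only in how this failure is exhibited. The paper fixes an abstract $h=\mu(h)\in L_2(0,1)\setminus L_{2,q}(0,1)$ and takes $f=\chi_{(0,\frac1n)}$ with ${\bm \alpha}(k)=n\int_{k/n}^{(k+1)/n}h$, so that $\mu(f\otimes{\bm \alpha})$ is the $n$-step average of $h$. It bounds the $L_{2,q}$-quasi-norms of these averages by $c_q\|h\|_{L_2}$ uniformly in $n$, then lets $n\to\infty$ (a Fatou-type limit) to reach $h\in L_{2,q}$, a contradiction.

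You instead discretise the explicit borderline profile $t^{-\frac12}$. This profile is not itself in $L_2$, so it could not serve as the paper's $h$. What you gain is a quantitative blow-up $(\log n)^{\frac1q-\frac12}$ of the ratio, with no existence or limiting step. Your ${\bm \alpha}_n$ is exactly the sequence $\{k^{-\frac12}\}_{k=1}^n$ that the paper itself uses in Lemma~\ref{one more idea behind ahs}. You also justify $\|\mu(g)\chi_{(0,1)}\|_{L_{2,q}}\lesssim\|g\|_{L_{2,q}+L_\infty}$ in the quasi-Banach range, which the paper uses, written as an equality, without comment.

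The truncation in Step~2 is unnecessary. The inequality $\mu(t;g_1+g_2)\le\mu(t;g_1)+\|g_2\|_\infty$ (no $\frac t2$ is needed), together with monotonicity and the quasi-triangle inequality in $L_{2,q}$, already gives the lower bound on all of $(0,1)$. Your step function is decreasing and exceeds $2^{-\frac12}t^{-\frac12}$ on $(\frac1n,1)$, so directly
$\int_0^1\mu(t;f_n\otimes{\bm \alpha}_n)^q\,dt^{\frac q2}\ge 2^{-\frac q2}\cdot\frac q2\log n$.

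If you did use your first version, note that its threshold $C\|g\|_{L_{2,q}+L_\infty}$ depends on the unknown norm. In that case $\delta$ could not be fixed in advance, although the $\log n$ lower bound still survives once you assume, for contradiction, that this norm is $O((\log n)^{\frac12})$.
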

\begin{proof} Fix $h=\mu(h)\in L_2(0,1)\backslash L_{2,q}(0,1)$ and set $f=\chi_{(0,\frac1n)}$ and 
set ${\bm \alpha}$ by
${\bm \alpha}(k)=n\int_{\frac{k}{n}}^{\frac{k+1}{n}}h$, $k=0,\cdots, n-1$.
We have  
$$\mu(f\otimes{\bm \alpha})\chi_{(0,1)}=\mu(f\otimes{\bm \alpha})=\sum_{k=0}^{n-1}\left(n\int_{\frac{k}{n}}^{\frac{k+1}{n}}h\right)\chi_{(\frac{k}{n},\frac{k+1}{n})}.$$
Thus,
\begin{align*}
\left\|
\sum_{k=0}^{n-1}\Big(n\int_{\frac{k}{n}}^{\frac{k+1}{n}}h\Big)\chi_{(\frac{k}{n},\frac{k+1}{n})}\right\|_{L_{2,q}}&=\left\|
\mu(f\otimes{\bm \alpha})\chi_{(0,1)}\right\|_{L_{2,q}}=
\left\|f\otimes{\bm \alpha}\right\|_{(L_{2,q}+L_{\infty})((0,\infty)\times\mathbb{Z}_+)}\\
&\leq c_q n^{\frac{1}{2}}\left\|{\bm \alpha}\right\|_{\ell_2}=c_q\left\|\sum_{k=0}^{n-1}\left(n\int_{\frac{k}{n}}^{\frac{k+1}{n}}h\right)\chi_{(\frac{k}{n},\frac{k+1}{n})}\right\|_{L_2}.
\end{align*}
Passing $n\to\infty,$ we conclude that $\left\|h\right\|_{L_{2,q}}\leq c_q\left\|h\right\|_{L_2}.$ However, the left hand side is assumed to be infinite.
\end{proof}
The following result can be found in \cite[Lemma 2]{Carothers81}, \cite[(8)]{Carothers}, \cite[(14)]{CT}  and  \cite[Theorem 7.4]{ON}. 
\begin{prop}\label{ONeil}Let $p\ge  q$ and 
let $x,y\in L_{p,q}(0,\infty )$.
We have 
$$\norm{x\otimes y}_{L_{p,q}}\le \norm{x }_{L_{p,q}}\norm{ y}_{L_{p,q}}. $$
\end{prop}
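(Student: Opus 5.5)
The plan is to work with the distribution-function form of the $L_{p,q}$ quasi-norm, not the rearrangement form, and to use the hypothesis $p\ge q$ only through the concavity of $t\mapsto t^{r}$ with $r:=q/p\in(0,1]$. First, by the layer-cake formula $\mu(t;x)^q=\int_0^{\mu(t;x)}qs^{q-1}\,ds$ and Fubini, Definition \ref{lpq def} gives
$$\norm{x}_{L_{p,q}}^q=q\int_0^\infty s^{q-1}\,d_{|x|}(s)^{r}\,ds .$$
Applying the same formula to $y$ and integrating in $u$ instead of $s$ gives $\norm{y}_{L_{p,q}}^q=\int_0^\infty d_{|y|}(u)^r\,d(u^q)$. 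Since the distribution is invariant under rearrangement, I may assume $x=\mu(x)$ and $y=\mu(y)$. For $s>0$, Fubini gives
$$d_{x\otimes y}(s)=\int_0^\infty d_{x}\big(s/\mu(t;y)\big)\,dt=\int_0^\infty d_y(u)\,d_u\varphi_s(u),\qquad \varphi_s(u):=d_x(s/u),$$
where $\varphi_s$ is increasing with $\varphi_s(0^+)=0$. The second equality is the Stieltjes rewriting of $\int\varphi_s(\mu(t;y))\,dt$.

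The key step is an elementary inequality. Let $D\ge0$ be decreasing and $\varphi\ge0$ increasing with $\varphi(0)=0$. Then for $0<r\le1$,
$$\Big(\int_0^\infty D\,d\varphi\Big)^r\le\int_0^\infty D^r\,d(\varphi^r).$$
To prove it, I will compare the two sides as functions of the upper limit $v$. Since $D$ is decreasing, $\int_0^vD\,d\varphi\ge D(v)\varphi(v)$. Because $r-1\le0$, this gives
$$d\Big(\int_0^vD\,d\varphi\Big)^r=r\Big(\int_0^vD\,d\varphi\Big)^{r-1}D(v)\,d\varphi(v)\le r\big(D(v)\varphi(v)\big)^{r-1}D(v)\,d\varphi(v)=D(v)^r\,d(\varphi^r)(v).$$
For non-smooth $\varphi$ I would approximate by smooth or step functions and pass to the limit by monotone convergence. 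This is the only place where $p\ge q$ is used, and I expect it to be the main (mild) obstacle: the chain rule has to be justified for Stieltjes measures, and the case $D=\infty$ near $0$ has to be handled. Truncating $y$ to bounded, finitely supported pieces and using the Fatou property of $L_{p,q}$ at the end should take care of the latter.

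With $D=d_y$ and $\varphi=\varphi_s$, I get $d_{x\otimes y}(s)^r\le\int_0^\infty d_y(u)^r\,d_u\big[d_x(s/u)^r\big]$. Multiplying by $qs^{q-1}$, integrating in $s$, and exchanging the order of integration (Tonelli, since all measures are positive) gives
$$\norm{x\otimes y}_{L_{p,q}}^q\le\int_0^\infty d_y(u)^r\,d\Phi(u),\qquad \Phi(u):=q\int_0^\infty s^{q-1}d_x(s/u)^r\,ds .$$
The substitution $s=u\sigma$ gives $\Phi(u)=u^q\norm{x}_{L_{p,q}}^q$. Hence the right-hand side equals $\norm{x}_{L_{p,q}}^q\int_0^\infty d_y(u)^r\,d(u^q)=\norm{x}_{L_{p,q}}^q\norm{y}_{L_{p,q}}^q$. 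Taking $q$-th roots yields the claim.
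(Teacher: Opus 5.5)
Your proof is correct, and it takes a genuinely different route. The paper gives no proof of its own; it only cites O'Neil, Carothers and Carothers--Turett, where the argument goes through $L_{p/q,1}$. There one uses $|x\otimes y|^q=|x|^q\otimes|y|^q$ to write $\norm{x\otimes y}_{L_{p,q}}^q=\norm{|x|^q\otimes|y|^q}_{L_{p/q,1}}$. Since $p/q\ge1$, $\norm{\cdot}_{L_{p/q,1}}$ is a genuine norm and its unit ball is generated convexly by normalised indicator functions. It therefore suffices to test $g=\chi_A$, for which $f\otimes\chi_A$ is equimeasurable with the dilation $t\mapsto f(t/m(A))$ and equality holds.

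You instead work directly with distribution functions. The hypothesis $p\ge q$ enters only through the Hardy-type inequality $\left(\int D\,d\varphi\right)^r\le\int D^r\,d(\varphi^r)$ for decreasing $D$ and $r=q/p\le 1$. For $\varphi(u)=u$ this is the norm-one inclusion $L_{1,r}\subset L_1$ on decreasing functions, in the paper's normalisation. The citation route is shorter but relies on the normability of $L_{p/q,1}$, the convex structure of its unit ball, and a closure argument. Yours is self-contained, uses only Fubini--Tonelli and one concavity estimate, and shows exactly where the hypothesis is needed, since your key inequality reverses when $r>1$.

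Two technical remarks.

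First, the smoothing step you single out as the main obstacle is not needed. On the continuous part of $d\varphi$ your chain-rule computation applies as written. At an atom of $d\varphi$ at $v$, let $\varphi$ jump from $a=\varphi(v^-)$ to $b=\varphi(v^+)$ and put $c=D(v)$. The mass $F_-=\int_{(0,v)}D\,d\varphi$ accumulated before $v$ satisfies $F_-\ge ca$, because $D\ge c$ on $(0,v)$. Since $t\mapsto (t+c(b-a))^r-t^r$ is decreasing, the jump of $F^r$ at $v$ is at most $(ca+c(b-a))^r-(ca)^r=c^r(b^r-a^r)$. This is exactly the atom of $D^r\,d(\varphi^r)$ at $v$.

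Second, instead of truncating $y$ to handle $D(0^+)=\infty$, you can replace $D$ by $\min(D,n)$, which is still decreasing. Then let $n\to\infty$ on both sides by monotone convergence.
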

%\begin{proof}Let us first consider the case for $L_{r,1}$
%when $r> 1$.
%Let $x,y\in L_{r,1}(0,\infty )$.

%Fix $x\in L_{r,1}(0,\infty )$ and define the operator $T_x:y\to x\otimes y$. we claim that $\left\|T_x\right\|_{L_{r,1}\to L_{r,1}}\leq\left\|x\right\|_{L_{r,1}}$. 
%It suffices to verify the assertion for the extreme points of the unit ball of $L_{r,1}(0,\infty )$, that is, 
%for functions in the form of $m(A)^{-1/p}\chi_A$, where $A\subset [0,\infty)$ is a Lebesgue measurable set
% (see \cite[Theorem 1]{CT}). 
%Hence, it suffices to observe that  $\left\|x\otimes\chi_A\right\|_{L_{r,1}}\leq\left\|x\right\|_{L_{r,1}}\left\|\chi_A\right\|_{L_{r,1}}.$
%Therefore, 
%for any $y\in L_{r,1}(0,\infty)$,
%we have 
%$$\norm{x\otimes y}_{L_{r,1}}\le \norm{x }_{L_{r,1}}\norm{ y}_{L_{r,1}}. $$

%For $x,y\in L_{p,q}(0,\infty )$,
% we have 
%\begin{align*}\left\|x\otimes y \right\|_{L_{p,q}}
%&=\left( \int_0^\infty \mu(t; x\otimes y )^q d t^{q/p} \right)^{1/q}
%= \left( \int_0^\infty \mu(t; |x|^q\otimes |y|^q ) d t^{q/p} \right)^{1/q}  \\
%& =\left\||x|^q\otimes|y|^q\right\|_{L_{\frac{p}{q},1}}^{\frac1q}\leq \left\||x|^q\right\|_{L_{\frac{p}{q},1}}^{\frac1q}\left\||y|^q\right\|_{L_{\frac{p}{q},1}}^{\frac1q}=\left\|x\right\|_{L_{p,q}}\left\|y \right\|_{L_{p,q}},
%\end{align*}
%which completes the proof of the proposition. 
%\end{proof}

The following lemma should be compared with \cite[Lemma 7]{SS}.
\begin{lem}\label{one more idea behind ahs} Let $q\in(0,2).$ For every $f\in L_{2,q}(0,1),$ we have
$$\left\|f\otimes\{k^{-\frac12}\}_{k=1}^n\right\|_{L_{2,q}+L_{\infty}}=o(\log^{\frac1q}(n)),\quad n\to\infty.$$
\end{lem}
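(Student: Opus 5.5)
The plan is to reduce the estimate, via a layer-cake decomposition, to characteristic functions $f=\chi_{(0,s)}$, and then finish with dominated convergence. Throughout, write ${\bm \alpha}_n=\{k^{-\frac12}\}_{k=1}^n$ and $g_f=f\otimes{\bm \alpha}_n$.

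\emph{Step 1: a subadditive equivalent quasi-norm.} For a function $g$ on $(0,\infty)$ (or on $(0,\infty)\times\mathbb{Z}_+$), the quasi-norm $\|g\|_{L_{2,q}+L_\infty}$ is equivalent to $\|\mu(g)\chi_{(0,1)}\|_{L_{2,q}}$; this is the identification already used in the proof of Lemma \ref{idea behind ahs}. Its $q$-th power is
$$\int_0^1\mu(t;g)^q\,dt^{\frac q2}=\int_0^1\mu(t;|g|^q)\,d\varphi(t),\qquad \varphi(t)=t^{\frac q2}.$$
Since $q<2$, the function $\varphi$ is concave, so the right-hand side is the norm of the Lorentz space $\Lambda_\varphi(0,1)$ evaluated at $\mu(|g|^q)\chi_{(0,1)}$. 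Consequently the functional
$$N(h):=\int_0^1\mu(t;h)\,d\varphi(t),\qquad h\ge0,$$
is subadditive, and it also respects integrals of positive families (Minkowski's integral inequality). This is the only place where $q<2$ is used.

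\emph{Step 2: layer cake.} Since $|g_f|^q=|f|^q\otimes{\bm \alpha}_n^q$, and $|f|^q=\int_0^\infty\chi_{E_\lambda}\,d\lambda$ with $E_\lambda=\{|f|^q>\lambda\}$, we get
$$\|g_f\|^q_{L_{2,q}+L_\infty}\approx N(|g_f|^q)\le\int_0^\infty N\bigl(\chi_{E_\lambda}\otimes{\bm \alpha}_n^q\bigr)\,d\lambda=\int_0^\infty \bigl\|\chi_{E_\lambda}\otimes{\bm \alpha}_n\bigr\|^q\,d\lambda .$$

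\emph{Step 3: the characteristic-function case.} Take $f=\chi_E$ with $m(E)=s\le1$. Then $\chi_E\otimes{\bm \alpha}_n$ takes the value $k^{-1/2}$ on a set of measure $s$ for each $k\le n$. Hence
$$\mu(t)=(\lceil t/s\rceil)^{-1/2}\le\min\{1,(s/t)^{1/2}\}\quad\text{for }t<ns,$$
and $\mu(t)=0$ for $t\ge ns$. Integrating up to $\min(1,ns)$ gives
$$N\bigl(\chi_E\otimes{\bm \alpha}_n^q\bigr)\le s^{\frac q2}+\frac q2\,s^{\frac q2}\log\frac{\min(1,ns)}{s}\le C_q\, s^{\frac q2}\Bigl(1+\min\bigl\{\log n,\log\tfrac1s\bigr\}\Bigr).$$

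\emph{Step 4: dominated convergence.} Insert the bound of Step 3 into Step 2 and divide by $\log n$:
$$\frac{\|g_f\|^q_{L_{2,q}+L_\infty}}{\log n}\lesssim\int_0^\infty m(E_\lambda)^{\frac q2}\,\frac{1+\min\{\log n,\log(1/m(E_\lambda))\}}{\log n}\,d\lambda .$$
For each fixed $\lambda$ with $m(E_\lambda)>0$, the fraction tends to $0$ as $n\to\infty$. It is also bounded by $2$ once $n\ge3$. The dominating function is therefore $2m(E_\lambda)^{q/2}$, whose integral is
$$\int_0^\infty m(E_\lambda)^{\frac q2}\,d\lambda=\int_0^\infty\mu(t;f)^q\,dt^{\frac q2}=\|f\|_{L_{2,q}}^q<\infty,$$
the first equality being Fubini. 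Dominated convergence gives $\|g_f\|^q=o(\log n)$, that is, $\|g_f\|=o(\log^{1/q}n)$.

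The main obstacle I expect is Step 1: justifying carefully that the $(L_{2,q}+L_\infty)$-quasi-norm is, up to constants, $N(|g|^q)^{1/q}$ with $N$ genuinely a norm, so that the layer-cake integral may be pulled outside. This relies on concavity of $t^{q/2}$, i.e. on $q<2$, and without it the argument breaks down.
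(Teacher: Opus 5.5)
Your proof is correct, but it takes a different route from the paper's.

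\textbf{The paper's route.} It first obtains the uniform bound $\|f\otimes\{k^{-\frac12}\}_{k=1}^n\|_{L_{2,q}}\le c_q\|f\|_{L_{2,q}}\log^{\frac1q}(n)$. This comes from O'Neil's tensor inequality (Proposition~\ref{ONeil}, applicable because $2\ge q$) together with $\|\{k^{-\frac12}\}_{k=1}^n\|_{\ell_{2,q}}\lesssim\log^{\frac1q}(n)$. It then upgrades $O$ to $o$ by density. One writes $f=g+h$ with $g\in L_\infty(0,1)$ and $\|h\|_{L_{2,q}}<\epsilon$. The term $h\otimes\{k^{-\frac12}\}_{k=1}^n$ goes into the $L_{2,q}$-summand at cost $c_q\epsilon\log^{\frac1q}(n)$. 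The term $g\otimes\{k^{-\frac12}\}_{k=1}^n$ goes into the $L_\infty$-summand at cost at most $\|g\|_\infty$.

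\textbf{Your route.} You never leave the truncated functional $\int_0^1\mu^q\,dt^{\frac q2}$. Concavity of $t^{\frac q2}$ makes it a Lorentz $\Lambda_\varphi$-type norm of $|g|^q$, which lets you reduce to level sets. This is essentially the same mechanism behind O'Neil's inequality for $p\ge q$, namely reduction to $L_{p/q,1}$ and characteristic functions. The cut-off at $t=1$ plays the role of the $L_\infty$-summand. It appears explicitly as the factor $\min\{\log n,\log\frac{1}{m(E_\lambda)}\}$, which is bounded for each fixed level. Dominated convergence then replaces the paper's $\epsilon$-approximation.

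\textbf{Comparison.} Your version is self-contained. It also gives an explicit quantitative bound, $\int_0^\infty m(E_\lambda)^{\frac q2}\bigl(1+\min\{\log n,\log\frac{1}{m(E_\lambda)}\}\bigr)\,d\lambda$, which recovers the $O(\log^{\frac1q}n)$ estimate as a by-product. The paper's version is shorter, given the cited tensor inequality.

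\textbf{The step you flagged.} Interchanging $N$ with the layer-cake integral closes without any integral Minkowski inequality. Approximate $|f|^q$ from below by the simple functions $\sum_i(\lambda_{i+1}-\lambda_i)\chi_{E_{\lambda_{i+1}}}$ along refining partitions. Finite subadditivity and homogeneity of $N$, together with the fact that $\lambda\mapsto N(\chi_{E_\lambda}\otimes{\bm \alpha}_n^q)$ is decreasing, bound $N$ of each such function by $\int_0^\infty N(\chi_{E_\lambda}\otimes{\bm \alpha}_n^q)\,d\lambda$. Then pass to the limit using monotone convergence of decreasing rearrangements.
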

\begin{proof}
% For every $r\geq1,$ we have
%$$\left\|g\otimes{\bm \beta}\right\|_{L_{r,1}}
%\leq \left\|g\right\|_{L_{r,1}} \left\|{\bm \beta}\right\|_{\ell_{r,1} },\quad g\in L_{r,1}(0,1),\quad {\bm \beta}\in \ell_{r,1}.$$
%Hence, 
By Proposition \ref{ONeil} (with identifying elements from $\ell_{p,q}$ as functions in $L_{p,q}(0,\infty)$), for any $g\in L_{2,q}(0,1) $ and $ {\bm \alpha}\in \ell_{2,q}$, 
we have  
$$\left\|f\otimes{\bm \alpha}\right\|_{L_{2,q}}
%=\left\||f|^q\otimes|{\bm \alpha}|^q\right\|_{L_{\frac{2}{q},1}}^{\frac1q}\leq \left\||f|^q\right\|_{L_{\frac{2}{q},1}}^{\frac1q}\left\||{\bm \alpha}|^q\right\|_{L_{\frac{2}{q},1}}^{\frac1q}=
\le 
\left\|f\right\|_{L_{2,q}}\left\|{\bm \alpha}\right\|_{\ell_{2,q}}.$$
In particular, there exists a constant $c_q$ depending on $q$ only such that 
\begin{align*}
\left\|f\otimes\{k^{-\frac12}\}_{k=1}^n\right\|_{L_{2,q} }&\leq \left\|f\right\|_{L_{2,q}}\left\|\{k^{-\frac12}\}_{k=1}^n\right\|_{\ell_{2,q}}\\
&
=\left\|f\right\|_{L_{2,q}}\left( \int_0^\infty \mu(t; \{k^{-\frac12}\}_{k=1}^n )^q  dt^{q/2}\right)^{1/q }\\
&\le \left\|f\right\|_{L_{2,q}} \left(  1+ \frac{q}{2}
\sum_{k=2}^n  k^{-\frac{q}{2} }    (k-1)^{\frac{q}{2}-1 }  \right)^{1/q }
\\
&\le  \left  \|f\right\|_{L_{2,q}} \left(   
2 \sum_{k=1}^n  k^{-1 }   \right)^{1/q }
\\
&\leq c_q\left\|f\right\|_{L_{2,q}}\log^{\frac1q}(n).
\end{align*}

Fix $\epsilon>0$ and write $f=g+h,$ where $\left\|h\right\|_{L_{2,q} }<\epsilon$ and $g\in L_{\infty}(0,1).$ We have
\begin{align*}
\left\|f\otimes\{k^{-\frac12}\}_{k=1}^n\right\|_{L_{2,q}+L_{\infty}}&\leq
 \left\|h\otimes\{k^{-\frac12}\}_{k=1}^n\right\|_{L_{2,q}}+ \left\|g\otimes\{k^{-\frac12}\}_{k=1}^n\right\|_{\infty}\\
 &
 \leq c_q\epsilon\cdot\log^{\frac1q}(n)+\left\|g\right\|_{\infty}.
 \end{align*}
Hence,
$$\limsup_{n\to\infty}\log^{-\frac1q}(n)\left\|f\otimes\{k^{-\frac12}\}_{k=1}^n\right\|_{L_{2,q}+L_{\infty}}\leq c_q\epsilon.$$
Since $\epsilon>0$ is arbitrary, the assertion follows.
\end{proof}

\begin{lem}\label{p=2 second compute lemma} Let $p>0$ and $q\in(0,2).$ There exists $0\neq f\in L_{2,q}(0,1)$ such that 
there exists no  $a\in \ell_{p,q}$ satisfying 
$$\left\|a\otimes{\bm \alpha}\right\|_{\ell_{p,q}}+\left\|{\bm \alpha}\right\|_{\ell_q}\lesssim \|f\otimes{\bm \alpha}\|_{L_{2,q}+L_{\infty}}+\left\|{\bm \alpha}\right\|_{\ell_2}
\lesssim\left\|a\otimes{\bm \alpha}\right\|_{\ell_{p,q}}+\left\|{\bm \alpha}\right\|_{\ell_q}$$
or
$$\left\|a\otimes{\bm \alpha}\right\|_{\ell_{p,q}}\lesssim \left\|f\otimes{\bm \alpha}\right\|_{L_{2,q}+L_{\infty}}+\left\|{\bm \alpha}\right\|_{\ell_2}\lesssim\left\|a\otimes{\bm \alpha}\right\|_{\ell_{p,q}}+\left\|{\bm \alpha}\right\|_{\ell_2}$$
for all finite sequence ${\bm \alpha}$.
\end{lem}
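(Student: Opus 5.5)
We will choose $f=\chi_{(0,1)}$, or rather $f$ with $f\notin L_2$-controlled behaviour; the simplest choice is $f=\chi_{(0,1)}$. Then $f\otimes{\bm \alpha}$ is equimeasurable with ${\bm \alpha}$ viewed as a step function, and $\|f\otimes{\bm \alpha}\|_{L_{2,q}+L_\infty}\approx\|{\bm \alpha}\|_{\ell_{2,q}+\ell_\infty}$, which is dominated by $\|{\bm \alpha}\|_{\ell_\infty}$ up to a constant (the $L_\infty$ part absorbs everything, since $\mu(f\otimes{\bm\alpha})\chi_{(0,1)}$ is constant on $(0,1)$). Hence the middle expression is $\approx\|{\bm \alpha}\|_{\ell_2}$. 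The plan is to show that neither two-sided estimate can hold with $\|{\bm\alpha}\|_{\ell_2}$ as the middle term.

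\emph{First alternative.} The left inequality gives $\|{\bm \alpha}\|_{\ell_q}\lesssim\|{\bm\alpha}\|_{\ell_2}$ for all finite ${\bm\alpha}$. Testing ${\bm \alpha}=(1,\dots,1,0,\dots)$ with $n$ ones gives $n^{1/q}\lesssim n^{1/2}$, which is impossible since $q<2$.

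\emph{Second alternative.} The left inequality with the same ${\bm\alpha}$ gives $\|a\otimes{\bm\alpha}\|_{\ell_{p,q}}\lesssim n^{1/2}$. Since $\|a\otimes{\bm\alpha}\|_{\ell_{p,q}}\ge \|a\|_\infty n^{1/p}$, this forces either $a=0$ or $p\ge2$.
\begin{itemize}
\item If $a=0$, the right inequality reads $\|{\bm\alpha}\|_{\ell_2}\lesssim\|{\bm\alpha}\|_{\ell_2}$, which is trivially true. So the choice $f=\chi_{(0,1)}$ fails, and a genuinely unbounded $f$ is needed. This is the main obstacle.
\item If $a\neq0$ and $p\ge 2$, we compare $\|a\otimes{\bm\alpha}\|_{\ell_{p,q}}\gtrsim \|{\bm\alpha}\|_{\ell_{p,q}}\|a\|_\infty$ against $\|{\bm\alpha}\|_{\ell_2}$. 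Taking ${\bm\alpha}=\{k^{-1/2}\}_{k=1}^n$ with $p=2$, $q<2$, the left side is $\approx\log^{1/q}n$ while the right is $\log^{1/2}n$, which is impossible. For $p>2$ no contradiction arises from constants alone.
\end{itemize}

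Hence the refined plan is to pick $f\in L_{2,q}(0,1)\setminus L_2$-type extremal, e.g.\ $\mu(f)(t)=t^{-1/2}\log^{-\gamma}(e/t)$ with $1/q<\gamma\le1/2$ impossible since $q<2$. We therefore instead take $\gamma>1/q$, so that $f\in L_{2,q}$ (and $f\in L_2$ too). We then test with ${\bm\alpha}=\{k^{-1/2}\}_{k=1}^n$, using Lemma \ref{one more idea behind ahs} to get that the middle term is $o(\log^{1/q}n)+\log^{1/2}n=o(\log^{1/q}n)$. In the first alternative the left side contains $\|{\bm\alpha}\|_{\ell_q}\approx n^{1/q-1/2}$, which is a contradiction. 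In the second alternative with $a\ne0$, the left side is $\ge\|a\|_\infty\|{\bm\alpha}\|_{\ell_{p,q}}$. For $p\le2$ this is $\gtrsim\log^{1/q}n$ or larger, a contradiction. For $p>2$ and for $a=0$, we need a lower bound on the middle term. Here we use Lemma \ref{idea behind ahs}: the ratio $\|f\otimes{\bm\alpha}\|_{L_{2,q}+L_\infty}/\|{\bm\alpha}\|_{\ell_2}$ is unbounded over ${\bm\alpha}$ (for suitable $f$, constructed by a gliding-hump sum of the counterexamples $\chi_{(0,1/n)}$ there). Meanwhile the right side is $\lesssim\|{\bm\alpha}\|_{\ell_2}+\|a\|_{\ell_{p,q}}\|{\bm\alpha}\|_{\ell_{p,q}}$ (by Proposition \ref{ONeil} when $p\ge q$), and for $p>2$ we have $\|{\bm\alpha}\|_{\ell_{p,q}}\lesssim\|{\bm\alpha}\|_{\ell_2}$. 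This gives the contradiction. The delicate step is producing a single $f$ for which the unboundedness in Lemma \ref{idea behind ahs} occurs, via a disjoint-block sum with rapidly decreasing coefficients.
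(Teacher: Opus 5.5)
\textbf{Gap: the existence of a single bad $f$ is asserted but never proved, and your explicit candidate fails.}

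Your case analysis is the same as the paper's, and all of it is valid for \emph{every} $f\in L_{2,q}(0,1)$:
\begin{itemize}
\item The first alternative fails because $\|{\bm \alpha}\|_{\ell_q}$ outgrows $\|{\bm \alpha}\|_{\ell_2}$.
\item For $a\neq 0$ and $p\le 2$, the bound $\|a\otimes{\bm \alpha}\|_{\ell_{p,q}}\ge\|a\|_\infty\|{\bm \alpha}\|_{\ell_{p,q}}$ with ${\bm \alpha}=\{k^{-1/2}\}_{k=1}^n$ contradicts Lemma~\ref{one more idea behind ahs}.
\item When $p>2$ or $a=0$, the right-hand inequality reduces, via Proposition~\ref{ONeil} and $\ell_2\subset\ell_{p,q}$, to $\|f\otimes{\bm \alpha}\|_{L_{2,q}+L_\infty}\lesssim\|{\bm \alpha}\|_{\ell_2}$.
\end{itemize}
So the whole content of the lemma is the existence of one $f$ with $\sup_{{\bm \alpha}}\|f\otimes{\bm \alpha}\|_{L_{2,q}+L_\infty}/\|{\bm \alpha}\|_{\ell_2}=\infty$. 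That is exactly the step you call ``delicate'' and leave undone.

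Your explicit candidate cannot supply it. Take $\mu(t;f)=t^{-1/2}\log^{-\gamma}(C/t)$, with $C$ large enough that this is decreasing, and set $u_0:=\lim_{t\uparrow 1}\mu(t;f)$. Then $u\mapsto u^2d_{|f|}(u)$ is nonincreasing on $[u_0,\infty)$. Hence for $0\le{\bm \alpha}(k)\le 1$ and $s\ge u_0$ you get $d_{|f\otimes{\bm \alpha}|}(s)=\sum_k d_{|f|}(s/{\bm \alpha}(k))\le\|{\bm \alpha}\|_{\ell_2}^2\,d_{|f|}(s)$. This gives $\mu(t;f\otimes{\bm \alpha})\le u_0+\mu(t/\|{\bm \alpha}\|_{\ell_2}^2;f)$, and by homogeneity $\|f\otimes{\bm \alpha}\|_{L_{2,q}+L_\infty}\lesssim\|{\bm \alpha}\|_{\ell_2}$. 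Thus $a=0$ satisfies the second alternative for this $f$. Any witness must have $u\mapsto u^2d_{|f|}(u)$ far from monotone.

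\textbf{How the paper fills it.} The paper argues by contradiction. If every $f$ admitted such an $a$, then $\|f\otimes{\bm \alpha}\|_{L_{2,q}+L_\infty}\le C_f\|{\bm \alpha}\|_{\ell_2}$ for every $f$. The Uniform Boundedness Principle on the $F$-space $L_{2,q}(0,1)$, applied to the operators $f\mapsto f\otimes{\bm \alpha}$ with $\|{\bm \alpha}\|_{\ell_2}\le 1$, then gives a uniform constant $c_q\|f\|_{L_{2,q}}$, contradicting Lemma~\ref{idea behind ahs}.

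\textbf{How to finish your gliding hump.} This route also works and is easy, because positivity rules out cancellation.
\begin{itemize}
\item Let $g_n=n^{1/2}\chi_{(0,1/n)}$, so $\|g_n\|_{L_{2,q}}=1$, and ${\bm \alpha}_n=\{k^{-1/2}\}_{k=1}^n$. Then $\|g_n\otimes{\bm \alpha}_n\|_{L_{2,q}+L_\infty}\approx\log^{1/q}n$, while $\|{\bm \alpha}_n\|_{\ell_2}\approx\log^{1/2}n$.
\item Put $f=\sum_j\epsilon_j g_{n_j}$ with $\epsilon_j=(2C_{L_{2,q}})^{-j}$, so the series converges in $L_{2,q}(0,1)$. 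Choose $n_j$ so large that $\epsilon_j\log^{1/q-1/2}n_j\ge j$.
\item Since $f\ge\epsilon_j g_{n_j}\ge 0$, monotonicity of the symmetric quasi-norm gives $\|f\otimes{\bm \alpha}_{n_j}\|_{L_{2,q}+L_\infty}\gtrsim j\,\|{\bm \alpha}_{n_j}\|_{\ell_2}$.
\end{itemize}
Fix this (nonzero) $f$ from the outset and your case analysis goes through unchanged. This yields an explicit witness; the paper's argument is shorter but only proves existence.
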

\begin{proof} 
Assume by contradiction that 
for every $f\in L_{2,q}(0,1)$,  there exists 
 an element $a\in \ell_{p,q}$ such that
$$\left\|a\otimes{\bm \alpha}\right\|_{\ell_{p,q}}+\left\|{\bm \alpha}\right\|_{\ell_q}\lesssim \|f\otimes{\bm \alpha}\|_{L_{2,q}+L_{\infty}}+\left\|{\bm \alpha}\right\|_{\ell_2}
\lesssim\left\|a\otimes{\bm \alpha}\right\|_{\ell_{p,q}}+\left\|{\bm \alpha}\right\|_{\ell_q}$$
or
$$\left\|a\otimes{\bm \alpha}\right\|_{\ell_{p,q}}\lesssim \left\|f\otimes{\bm \alpha}\right\|_{L_{2,q}+L_{\infty}}+\left\|{\bm \alpha}\right\|_{\ell_2}\lesssim\left\|a\otimes{\bm \alpha}\right\|_{\ell_{p,q}}+\left\|{\bm \alpha}\right\|_{\ell_2}$$
for all finite sequence ${\bm \alpha}$.
	
In the first case, we have
$$\left\|{\bm \alpha}\right\|_{\ell_q}\lesssim \left\|f\otimes{\bm \alpha}\right\|_{L_{2,q}+L_{\infty}}+\left\|{\bm \alpha}\right\|_{\ell_2}.$$
Setting ${\bm \alpha}=\chi_{[0,n)},$ we write
$$n^{\frac1q}\lesssim \left\|\sigma_nf\right\|_{L_{2,q}+L_{\infty}}+n^{\frac12}\lesssim n^{\frac12}.$$
Hence, the first case is impossible and we only consider the second case.

If $p>2,$ then
$$\left\|f\otimes{\bm \alpha}\right\|_{L_{2,q}+L_{\infty}}\lesssim
\left\|a\otimes{\bm \alpha}\right\|_{\ell_{p,q}}+\left\|{\bm \alpha}\right\|_{\ell_2}\leq\left\|a\right\|_{\ell_{p,q}}\left\|{\bm \alpha}\right\|_{\ell_{p,q}}+\left\|{\bm \alpha}\right\|_{\ell_2}
\lesssim\left\|{\bm \alpha}\right\|_{\ell_2}.$$
Since such an inequality is true for {\it every} $f\in L_{2,q}(0,1),$ it follows from the Uniform Boundedness Principle that
$$\left\|f\otimes{\bm \alpha}\right\|_{L_{2,q}+L_{\infty}}\leq c_q\left\|f\right\|_{L_{2,q}}\left\|{\bm \alpha}\right\|_{\ell_2},\quad f\in L_{2,q}(0,1),\quad {\bm \alpha}\in \ell_2,$$
which contradicts Lemma \ref{idea behind ahs}.

If $p<2,$ then
$$\left\|a\right\|_{\infty}\left\|{\bm \alpha}\right\|_{\ell_{p,q}}\leq \left\|a\otimes{\bm \alpha}\right\|_{\ell_{p,q}}\lesssim \left\|f\otimes{\bm \alpha}\right\|_{L_{2,q}+L_{\infty}}+\left\|{\bm \alpha}\right\|_{\ell_2}.$$
Setting ${\bm \alpha}=\chi_{[0,n)},$ we have
$$n^{\frac1p}\left\|a\right\|_{\infty}\lesssim \left\|\sigma_nf\right\|_{L_{2,q}+L_{\infty}}+n^{\frac12}\leq \left\|\sigma_nf\right\|_{L_{2,q}}+n^{\frac12}=n^{\frac12}\left\|f\right\|_{L_{2,q}}+n^{\frac12}.$$
This means $a=0.$ Hence, we have
$$\left\|f\otimes{\bm \alpha}\right\|_{L_{2,q}+L_{\infty}}\lesssim\left\|{\bm \alpha}\right\|_{\ell_2}.$$
Since such an inequality is true for {\it every} $f\in L_{2,q}(0,1),$ it follows from the Uniform Boundedness Principle that
$$\left\|f\otimes{\bm \alpha}\right\|_{L_{2,q}+L_{\infty}}\leq c_q\left\|f\right\|_{L_{2,q}}\left\|{\bm \alpha}\right\|_{\ell_2},\quad f\in L_{2,q}(0,1),\quad {\bm \alpha}\in \ell_2.$$
This is impossible by Lemma \ref{idea behind ahs}.

If $p=2,$ then
$$\left\|a\otimes{\bm \alpha}\right\|_{\ell_{2,q}}\lesssim \left\|f\otimes{\bm \alpha}\right\|_{L_{2,q}+L_{\infty}}+\left\|{\bm \alpha}\right\|_{\ell_2}
\lesssim
\left\|a\otimes{\bm \alpha}\right\|_{L_{2,q}}+\left\|{\bm \alpha}\right\|_{\ell_2}.$$
If $a\neq 0,$ then we write
$$\left\|{\bm \alpha}\right\|_{\ell_{2,q}}\lesssim\left\|a\right\|_{\infty}\left\|{\bm \alpha}\right\|_{\ell_{2,q}}\leq\left\|a\otimes{\bm \alpha}\right\|_{\ell_{2,q}}\lesssim \left\|f\otimes{\bm \alpha}\right\|_{L_{2,q}+L_{\infty}}+\left\|{\bm \alpha}\right\|_{\ell_2}.$$
In particular, letting ${\bm \alpha}=\{k^{-\frac12}\}_{k=1}^n$, since $\norm{{\bm \alpha}}_{\ell_{2,q}}\gtrsim \norm{{\bm \alpha}}_{\ell_2}$, it follows that 
$$\left\|f\otimes\{k^{-\frac12}\}_{k=1}^n\right\|_{L_{2,q}+L_{\infty}}\gtrsim \log^{\frac1q}(n),\quad n\to\infty.$$
This contradicts Lemma \ref{one more idea behind ahs}. Thus, $a=0$ for every $f\in L_{2,q}(0,1)$ and we have
$$\left\|f\otimes{\bm \alpha}\right\|_{L_{2,q}+L_{\infty}}\lesssim\left\|{\bm \alpha}\right\|_{\ell_2}.$$
Since such an inequality is true for {\it every} $f\in L_{2,q}(0,1),$ it follows from the Uniform Boundedness Principle that
$$\left\|f\otimes{\bm \alpha}\right\|_{L_{2,q}+L_{\infty}}\leq c_q\left\|f\right\|_{2,q}\left\|{\bm \alpha}\right\|_2,\quad f\in L_{2,q}(0,1),\quad {\bm \alpha}\in \ell_2.$$
This contradicts Lemma \ref{idea behind ahs}.
\end{proof}

\begin{lem}\label{p=2 final lemma} If $p,q>0,$ $q\neq 2,$ then $L_{2,q}(0,1)\not\hookrightarrow \mathcal{C}_{p,q}.$
\end{lem}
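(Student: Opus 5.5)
Assume towards a contradiction that $L_{2,q}(0,1)\hookrightarrow \mathcal{C}_{p,q}$ with $q\neq 2$. By the Remark following Lemma \ref{boundedness of K}, $L_{2,q}(0,1)$ has the Kruglov property. Theorem \ref{AS thm} then gives $Z^2_{L_{2,q}}(0,\infty)\hookrightarrow L_{2,q}(0,1)$, and therefore $Z^2_{L_{2,q}}(0,\infty)\hookrightarrow\mathcal{C}_{p,q}$. Since $(0,\infty)\times\mathbb{Z}_+$ is measure-isomorphic to $(0,\infty)$, we also get $Z^2_{L_{2,q}}((0,\infty)\times\mathbb{Z}_+)\hookrightarrow\mathcal{C}_{p,q}$. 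Lemma \ref{general new lemma} now applies: for every $f\in L_{2,q}(0,1)$ there is $a\in\ell_{p,q}$ satisfying one of its two two-sided estimates, for all finite ${\bm \alpha}$.

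Next I identify $\|f\otimes{\bm \alpha}\|_{Z^2_{L_{2,q}}}$ with the quantities in Lemmas \ref{p=2 compute lemma} and \ref{p=2 second compute lemma}. By definition,
$$\|g\|_{Z^2_{L_{2,q}}}=\|\mu(g)\chi_{(0,1)}\|_{L_{2,q}}+\|\mu(g)\chi_{(1,\infty)}\|_{L_2}.$$
The first term is comparable to $\|g\|_{L_{2,q}+L_\infty}$. It is also bounded by $\|g\|_{L_{2,q}}$. The second term is dominated by $\|g\|_{L_1+L_2}$. Up to constants we therefore have
$$\|g\|_{Z^2}\lesssim \|g\|_{L_{2,q}}+\|g\|_{L_1+L_2}.$$
For $g=f\otimes{\bm \alpha}$ the reverse-type bounds needed hold modulo the additive term $\|{\bm \alpha}\|_{\ell_2}$: the $L_2$ tail of $\mu(f\otimes{\bm \alpha})$ on $(1,\infty)$ is at most $\|f\|_{L_2}\|{\bm \alpha}\|_{\ell_2}$ when $f\in L_2$. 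In general I only use the inequality in the direction each lemma needs. This bookkeeping is routine but must be checked carefully in each case.

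Case $q>2$. Since $\|{\bm \alpha}\|_{\ell_q}\le\|{\bm \alpha}\|_{\ell_2}$, both alternatives of Lemma \ref{general new lemma} yield
$$\|a\otimes{\bm \alpha}\|_{\ell_{p,q}}\lesssim \|f\otimes{\bm \alpha}\|_{Z^2}+\|{\bm \alpha}\|_{\ell_2}\lesssim \|{\bm \alpha}\|_{\ell_2}+\|a\otimes{\bm \alpha}\|_{\ell_{p,q}}+\|{\bm \alpha}\|_{\ell_q}.$$
Using $\|f\otimes{\bm \alpha}\|_{Z^2}\lesssim\|f\otimes{\bm \alpha}\|_{L_{2,q}}+\|f\otimes{\bm \alpha}\|_{L_1+L_2}$ on the left, and $\|f\otimes{\bm \alpha}\|_{L_1+L_2}\lesssim\|f\otimes{\bm \alpha}\|_{Z^2}$ on the right, these become exactly \eqref{72eq1} and \eqref{72eq2}. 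The latter holds because on $(0,1)$ the $L_1$ norm is dominated by the $L_{2,q}$ quasi-norm. Lemma \ref{p=2 compute lemma} then forces $f\in L_2(0,1)$ for every $f\in L_{2,q}(0,1)$. This is false because $L_{2,q}(0,1)\supsetneq L_2(0,1)$ when $q>2$; take for example $f(t)=t^{-1/2}\log^{-1/2}(e/t)$.

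Case $q<2$. Here $Z^2_{L_{2,q}}$ is comparable to $\|\cdot\|_{L_{2,q}+L_\infty}+\|\mu(\cdot)\chi_{(1,\infty)}\|_{L_2}$. The additional tail term for $f\otimes{\bm \alpha}$ is at most $\|f\|_{L_2}\|{\bm \alpha}\|_{\ell_2}\lesssim\|f\|_{L_{2,q}}\|{\bm \alpha}\|_{\ell_2}$, since $L_{2,q}\subset L_2$ on $(0,1)$. It is therefore absorbed into the $\|{\bm \alpha}\|_{\ell_2}$ terms. Hence the conclusion of Lemma \ref{general new lemma} becomes exactly one of the two alternatives forbidden by Lemma \ref{p=2 second compute lemma}, with the $\ell_2$ term added to the right side of the first alternative's upper bound. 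That lemma's proof only used the lower bounds and the second-case upper bound, so the extra term is harmless. This contradiction finishes the proof. The main obstacle is the matching of norms in both cases: each lemma is stated with a slightly different norm and slightly different additive terms, so one must verify in each direction that the needed comparability holds.
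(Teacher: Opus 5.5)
Your proposal is correct and follows the paper's proof. You reduce via Theorem \ref{AS thm} to showing $Z^2_{L_{2,q}}(0,\infty)\not\hookrightarrow\mathcal{C}_{p,q}$ and apply Lemma \ref{general new lemma}. You then conclude with Lemma \ref{p=2 compute lemma} for $q>2$ (applied to some $f\in L_{2,q}(0,1)\setminus L_2(0,1)$) and with Lemma \ref{p=2 second compute lemma} for $q<2$. Your explicit norm bookkeeping stands in for the paper's identifications $Z^2_{L_{2,q}}=L_{2,q}\cap(L_1+L_2)$ for $q>2$ and $Z^2_{L_{2,q}}=(L_{2,q}+L_\infty)\cap L_2$ for $q<2$.

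There is one small slip. To obtain \eqref{72eq2} literally you also need $\|f\otimes{\bm \alpha}\|_{L_{2,q}}\lesssim\|f\otimes{\bm \alpha}\|_{Z^2_{L_{2,q}}}$, which you did not state. It holds for $q>2$ because the $L_2$-tail of $\mu$ on $(1,\infty)$ controls its $L_{2,q}$-tail. In any case, the proof of Lemma \ref{p=2 compute lemma} only uses the $L_1+L_2$ part of \eqref{72eq2}.
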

\begin{proof} By Theorem \ref{AS thm}, $Z_{L_{2,q}}^2(0,\infty)\hookrightarrow L_{2,q}(0,1).$ 
Therefore, it   suffices to prove that  $Z_{L_{2,q}}^2(0,\infty)\not\hookrightarrow \mathcal{C}_{p,q}.$
	
If $q>2,$ then
$$Z_{L_{2,q}}^2(0,\infty)=L_{2,q}(0,\infty)\cap (L_1+L_2)(0,\infty),\quad q>2.$$
If $Z_{L_{2,q}}^2(0,\infty)\hookrightarrow \mathcal{C}_{p,q}$, then 
for  $0\ne f\in L_{2,q}(0,1)\setminus L_2(0,1),$ there exists an element $a\in \ell_{p,q}$ (see Lemma \ref{general new lemma} above) such that either
$$\left\|a\otimes{\bm \alpha}\right\|_{\ell_{p,q}}+\left\|{\bm \alpha}\right\|_{\ell_q}\lesssim \left\|f\otimes{\bm \alpha}\right\|_{Z_{L_{2,q}}^2((0,\infty)\times\mathbb{Z}_+)}+\left\|{\bm \alpha}\right\|_{\ell_2}\lesssim \left\|{\bm \alpha}\right\|_{\ell_2}+\left\|a\otimes{\bm \alpha}\right\|_{\ell_{p,q}}+\left\|{\bm \alpha}\right\|_{\ell_q}$$
or
$$\left\|a\otimes{\bm \alpha}\right\|_{\ell_{p,q}}\lesssim \left\|f\otimes{\bm \alpha}\right\|_{Z_{L_{2,q}}^2((0,\infty)\times\mathbb{Z}_+)}+\left\|{\bm \alpha}\right\|_{\ell_2}\lesssim \left\|{\bm \alpha}\right\|_{\ell_2}+\left\|a\otimes{\bm \alpha}\right\|_{\ell_{p,q}}.$$
Since $q>2$, the second inequality implies the first one. 
This contradicts   Lemma~\ref{p=2 compute lemma} above and therefore, 
the assertion  that $Z_{L_{2,q}}^2(0,\infty)\not\hookrightarrow \mathcal{C}_{p,q}$ follows.

If $q\in(0,2),$ then
$$Z_{L_{2,q}}^2(0,\infty)=(L_{2,q}+L_{\infty})(0,\infty)\cap L_2(0,\infty).$$
In this case, the assertion  that $Z_{L_{2,q}}^2(0,\infty)\not\hookrightarrow \mathcal{C}_{p,q}$ follows from a combination of Lemma~\ref{general new lemma} and Lemma \ref{p=2 second compute lemma}.
This completes the proof.
\end{proof}

\begin{proof}[Proof of Theorem \ref{main thm} for $p_1=2$] Suppose $p_1=2$ and $L_{p_1,q_1}(0,1)\hookrightarrow\mathcal{C}_{p_2,q_2}.$ By Theorem \ref{easy thm}, we have either $p_1=q_1=2$ or $q_1=q_2.$ In the first case, 
we have $L_{2,2}(0,1)=L_2(0,1)\approx \ell_2 \hookrightarrow \cC_{p_2,q_2}$;  in the second case, the assertion follows from Lemma~\ref{p=2 final lemma}.
\end{proof}

\begin{rem}
    In Theorem \ref{main thm},
if $p_1=q_1=2,$ then $L_{p_1,q_1}(0,1)=L_2(0,1)\hookrightarrow\mathcal{C}_{p_2,q_2}.$ 
%The case when  $p_1=q_1=q_2>2$  remains open.
%In other words, we do not know whether $L_q(0,1)\hookrightarrow\mathcal{C}_{p,q}$ for $q>2.$ Only the special case $2<p\leq q$ is answered (in the negative) in Proposition 4.1 in \cite{AHS}.
The case when %the Question \ref{quest2} remains open is
$p_1=q_1=q_2>2$ remains open.
In other words, we do not know whether $L_q(0,1)\hookrightarrow\mathcal{C}_{p,q}$ for $q>2.$ Only the case when $2<p\leq q$ is treated (in the negative) in Proposition 4.1 in \cite{AHS}.
\end{rem}

{\bf  Acknowledgement} The authors would like to thank Zhizheng Yu for helpful discussions.
      
     {\bf  Data Availability}  No data is used in the paper. 
     
     { \bf Conflict of interest}  All authors declare that we have no conflict of interest.


\begin{thebibliography}{99}


\bibitem{AK}
F. Albiac, N. Kalton,
{\it Topics in Banach space theory,}
Graduate Texts in Mathematics 233, Springer, 2006.

%\bibitem{ACL}
%Z. Altshuler, P. Casazza, B.-L.  Lin,
%{\it On symmetric Basic sequences in Lorentz sequence spaces},
%Israel. J. Math.  {15} (1973), 144--155.
 
% \bibitem{Arazy78}
%J. Arazy, {\it 
%Some remarks on interpolation theorems and the boundness of the triangular projection in unitary matrix spaces}, 
% Integral Equations Operator Theory 1 (1978), no. 4, 453--495.

\bibitem{Arazy81}
J. Arazy, {\it Basic sequences, embeddings, and the uniqueness of the symmetric structure in unitary matrix spaces,}
J. Funct. Anal. {  40} (1981), 302--340. 


\bibitem{ALin}
J. 
Arazy, P.  Lin, {\it  
On $p$-convexity and $q$-concavity of unitary matrix spaces},
 Int. Equ. Oper. Theory 8 (1985), 295--313. 

\bibitem{AL}
J. Arazy,  J. Lindenstrauss,
{\it Some linear topological properties of the spaces $C_p$ of operators on Hilbert space.}
Compositio Math. {  30} (1975), 81--111.

\bibitem{A11}
S. Astashkin, {\it 
Rademacher series and isomorphisms of rearrangement invariant spaces on the finite interval and on the semi-axis}, J. Funct. Anal. 260 (2011), 195--207.

\bibitem{AHS} S. Astashkin, J. Huang, F. Sukochev, {\it Lack of isomorphic embeddings of symmetric function
spaces into operator ideals}, J. Funct. Anal. 280 (2021), no. 5, 108895.


\bibitem{ASS}
S. Astashkin, E. Semenov, F. Sukochev,
{\it The Banach--Saks $p$-property},
Math. Ann.  {332} (2005), 879--900.


%\bibitem{AMS}
%S. Astashkin, L. Maligranda, E. Semenov,
%{\it Multiplicator space and complemented subspaces of rearrangement invariant space,}
%J. Funct. Anal.  {2002} (2003), 247--276.
 
\bibitem{AS04}
S. Astashkin, F. Sukochev, {\it 
Comparison of sums of independent and disjoint functions in symmetric spaces,} Mat. Zametki, 76(4) (2004),  483--489.

\bibitem{AS05}
 S. Astashkin, F. Sukochev, {\it 
 Series of independent random variables in rearrangement invariant spaces: an operator approach,}
  Israel J. Math. 145 (2005), 125--156.
  
\bibitem{AS07}
 S. Astashkin, F. Sukochev, {\it 
 Series of independent, mean zero random variables in rearrangement-invariant spaces having the Kruglov property,}
  (Russian) Zap. Nauchn. Sem. S.-Peterburg. Otdel. Mat. Inst. Steklov. (POMI) 345 (2007), Issled. po Line\v{ı}n. Oper. i Teor. Funkts. 34, 25--50, 140; translation in J. Math. Sci. (N.Y.) 148 (2008), no. 6, 795--809.  

\bibitem{AS-uspehi} S. Astashkin, F. Sukochev, {\it Independent functions and the geometry of Banach spaces,} Russian Math. Surveys  {65} (2010), no. 6, 1003--1081.

\bibitem{ASZ}
S. Astashkin, F. Sukochev, D. Zanin,
{\it 
Disjointification inequalities in symmetric quasi-Banach spaces and their applications,}
Pacific J. Math. 270(2) (2014), 257--285.

%\bibitem{Banach}
%S. Banach, {\it Theorie des operations lineaires,}
%Monographie Matematyczne 1, Warsaw, 1932. 
  
%\bibitem{BL} Bergh J., L\"ofstr\"om J. {\it Interpolation spaces. An introduction.} Grundlehren der Mathematischen Wissenschaften, No. 223. Springer-Verlag, Berlin-New York, 1976.
 
\bibitem{BS}
 C. Bennett, R. Sharpley, {\it Interpolation of Operators}, Academic Press, Boston, 1988.
  
% \bibitem{Bhatia}
% R. Bhatia, {\it
% Perturbation inequalities for the absolute value map in norm ideals of operators,}
%  J. Operator Theory 19 (1988) 129--136.
  
\bibitem{BCLS}
A. Ber, V. Chilin, G. Levitina, F. Sukochev,
{\it Derivations on symmetric quasi-Banach ideals of compact operators},
J. Math. Anal. Appl. {\bf 272}(12) (2017), 2984--2997.
  
\bibitem{CSZ} L. Cadilhac, F. Sukochev, D. Zanin,  {\it Lorentz--Shimogaki and Arazy--Cwikel theorems revisited,} Pacific J. Math.  {\bf 328} (2024), no. 2, 227--254.
 
 \bibitem{Carothers81}
N. Carothers,
{\it Rearrangement invariant subspaces of Lorentz function spaces,}
Israel J. Math.  {40} (1981), 217--228.

%\bibitem{Carothers2}
%N. Carothers,
%{\it Rearrangement invariant subspaces of Lorentz function spaces, II, }
%Rocky Mountain J. Math.  {17} (1987), 607--616.



%\bibitem{CL}
%P. Casazza, B.-L. Lin,
%{\it Projections on Banach spaces with symmetric bases,}
%Studia Math.  {52} (1974), 189--193.


\bibitem{Creekmore} J. Creekmore, {\it Type and cotype in Lorentz $L_{pq}$ spaces.}
Indag. Math.  {84} (2) (1981), 145--152.

\bibitem{Carothers}
N. Carothers, {
\it 
 Rearrangement invariant subspaces of Lorentz function spaces. II,
 } Rocky Mountain J. Math. 17 (1987), no. 3, 607--616.

\bibitem{CD85}
N. Carothers, S. Dilworth,
{\it Geometry of Lorentz spaces via interpolation},
Texas Functional Analysis Seminar 1985--1986 (Austin, TX, 1985--1986), 107--133, Longhorn Notes, Univ. Texas, Austin, TX, 1986.


\bibitem{CD88}
N. Carothers, S. Dilworth,
{\it Subspaces of $L_{p,q}$,}
Proc. Amer. Math. Soc.
{  104} (1988), no. 2, 537--545.
\bibitem{CD89}
N. Carothers, S. Dilworth,
{\it Equidistributed random variables in $L_{p,q}$,}
    J. Funct. Anal.
     {84} (1989), 146--159.

\bibitem{CT}
N. Carothers, B. Turett,  {\it
Isometries on $L_{p,1}$}, 
Trans. Amer. Math. Soc. 297 (1986), no. 1, 95--103.

%\bibitem{CF}
%N. Carothers, P. Flinn,
%{\it Embedding $l_p^{n^{\bm \alpha}}$ in $l_{p,q}^n$},
%Proc. Amer. Math. Soc.  {88} (1983), 523--526.




\bibitem{CDS}
V. Chilin, P. Dodds, F. Sukochev, {\it  The Kadec-Klee property in symmetric
spaces of measurable operators}, Israel J. Math. 97 (1997), 203--219,
 

%\bibitem{Creekmore}
%J. Creekmore, 
%{\it
%Type and cotype in Lorentz $L_{pq}$ spaces}, Indag. Math. 84 (2) (1981) 145--152.




\bibitem{Dilworth}
 S. Dilworth, {\it Special Banach lattices and their applications}, in: Handbook of the Geometry of BanachSpaces, vol. I, North-Holland, Amsterdam, 2001, pp. 497--532. 
 

%\bibitem{Dilworth90} 
%S. Dilworth, {\it A scale of linear spaces related to the $L_p$ scale,} 
%Illinois J. Math. {34} (1) (1990), 140--148.

%\bibitem{Dirksen} Dirksen S. {\it Noncommutative Boyd interpolation theorems,} Trans. Amer. Math. Soc. {\bf 367} (2015), no. 6, 4079--4110.

%\bibitem{DDPS}
%P.  Dodds, T. Dodds, B. de Pagter, F.  Sukochev, 
%{\it   Lipschitz continuity of the absolute value and Riesz projections in symmetric operator spaces}, 
 % J. Funct. Anal. 148 (1997), no. 1, 28--69.
  
% \bibitem{DDS07}
% P. Dodds, T. Dodds, F. Sukochev,
% {\it Banach--Saks properties in symmetric spaces of measurable operators,}
% Studia Math.  {178} (2) (2007), 125--166. 
% 
  \bibitem{DDS14}   P. Dodds, T. Dodds, F. Sukochev,
   {\it On $p$-convexity and $q$-concavity in non-commutative symmetric spaces},
Integr. Equ. Oper. Theory  {78} (2014), 91--114.


%\bibitem{DDP} P.G.~Dodds, T.K.-Y.~Dodds, B.~Pagter, {\it Non-Commutative Banach Function Spaces}, Math.\ Z. {  201} (1989), 583--597.

 



%\bibitem{DFPS}
% P. Dodds, S.  Ferleger, B. de Pagter,  F. Sukochev,
%  {\it Vilenkin systems and generalized triangular truncation operator,} Integr. Equ. Oper. Theory
%   {  40} (2001), no. 4, 403--435.
%\bibitem{DP2}
%P. Dodds, B. de Pagter, {\it Normed K\"{o}the spaces: A non-commutative viewpoint,}
%Indag. Math. 25 (2014), 206--249.

\bibitem{DPS2016} P. Dodds, B. de Pagter, F. Sukochev, {\it Sets of uniformly absolutely continuous norm in symmetric spaces of measurable operators,}
Trans. Amer. Math. Soc. 368 (6) (2016), 4315--4355.

 

\bibitem{DPS}
P. Dodds, B. de Pagter, F. Sukochev,
{\it Noncommutative Integration and Operator Theory, }
Progress in Mathematics, volume 349, Birkh\"auser, Cham, Switzerland, 2023. 



%\bibitem{DSS}
%P. Dodds, E. Semenov, F. Sukochev,
%{\it The Banach--Saks properties in rearrangement invariant spaces,}
% Studia
%Math.  {162} (2004), 263--294.

%\bibitem{Dunford}
%N. Dunford, {\it
%Spectral operators},
% Pacific J. Math. 4 (1954), 321--354.

%\bibitem{Fremlin2} Fremlin D. {\it Measure theory. Vol. 2.} Torres Fremlin, Colchester, 2003.

\bibitem{Friedman}
Y.
Friedman, {\it
Subspace of $LC(H)$ and $C_p$,}
Proc. Amer. Math. Soc. 53 (1975), no. 1, 117--122.

\bibitem{Garling}
D. J. H. Garling, {\it On ideals of operators in Hilbert space},
 Proc. London Math. Soc. 17 (1967), 115--138.

%\bibitem{GT83}
%D.J.H. Garling, N. 
% Tomczak-Jaegermann, {\it
%The cotype and uniform convexity of unitary ideals},
% Israel J. Math. 45 (1983), no. 2-3, 175--197.


 \bibitem{Gillespie}
  T. Gillespie, {\it
  Boundedness criteria for Boolean algebras of projections,}
   J. Funct. Anal.  {148}
(1997), 70--85.

\bibitem{GnedenkoKolmogorov} B. Gnedenko, A. Kolmogorov   {\it Limit distributions for sums of independent random variables.} Addison-Wesley Publishing Co., 1968.


\bibitem{GK}
I. Gohberg,  M.  Kre\v{i}n,  {\it 
Introduction to the theory of linear nonselfadjoint operators}, Translations of Mathematical Monographs, Vol. 18. American Mathematical Society, Providence, RI, 1969.

\bibitem{GM}
J. Grala-Michalak, A.  Michalak,
{\it
On constructions of isometric copies of $L_p(0,1)$ spaces $(0<p\le 2)$
 by stochastic $p$-stable processes,} 
Comment. Math. 48 (2008), no. 1, 3--12.


%\bibitem{GL74}
%Y. Gordon, D. Lewis,
%{\it Absolutely summing operators and local unconditional structures},
% Acta Math.  {133} (1974), 27--48.

 
%\bibitem{HS}
%F. Hernandez, E. Semenov,
%{\it Subspaces generated by translations in rearrangement invariant spaces,}
%J. Funct. Anal.  {169} (1999), 52--80.

\bibitem{HOS}
R. Haydon, E. Odell, T. Schlumprecht,
{\it
Small subspaces of $L_p$},
Ann. Math., 173 (2011),   169--209.
\bibitem{Holub}
J. 
Holub, {\it
On subspaces of separable norm ideals},
Bull. Amer. Math. Soc. 79 (1973), 446--448.

% \bibitem{HJSZ}
%J. Huang, M. Junge, F. Sukochev, D. Zanin,
%{\it Embeddings of noncommutative $L_p$-spaces into noncommutative symmetric spaces},
%preprint. 


\bibitem{HNSZ}
J. Huang, Y. Nessipbayev, F. Sukochev, D. Zanin,
{\it Compactness criteria in quasi-Banach symmetric operator spaces associated with a non-commutative torus,}
J. Funct. Anal. 289(5) (2025), Paper No. 110946, 35 pp. 

\bibitem{HSS}
J. Huang, O. Sadovskaya, F. Sukochev,
{\it On Arazy's problem concerning isomorphic embeddings of ideals of compact operators,}
 Adv. Math.  {406} (2022), Paper No. 108530, 21 pp.



\bibitem{HSSZ}
J. Huang, O. Sadovskaya, F. Sukochev, D. Zanin,
{\it 
Lack of isomorphic embeddings of $\ell_{p,q}$  into $L_{p,q}(\mathcal{M},\tau)$ 
over a noncommutative probability space,
}
Adv. Math. (2025), Paper No. 110571.

\bibitem{HS20}
J.Huang, F. Sukochev, {\it
Operator H\"older functions},
Banach J. of Math. Anal., 14(2020), 607--
629.

\bibitem{HS21}
J. Huang, F. Sukochev,
{\it Isomorphic classification of $L_{p,q}$-spaces, II},
J. Funct. Anal.  {280} (2021), 108994.  

\bibitem{HSZ}
J. Huang, F. 
 Sukochev, D.  Zanin, {\it  Operator $\theta$-H\"older functions with respect to $\norm{\cdot}_p$, $0<p\le \infty$,} 
  J. Lond. Math. Soc. (2) 105 (2022), no. 4, 2436--2477.


\bibitem{HSZ25}
J. Huang, F. Sukochev, D. Zanin,
{\it Isomorphisms between symmetric spaces over infinite and finite von Neumann algebras},
submitted manuscript. 

\bibitem{JSZ20}
Y. Jiao, F. Sukochev,  D. Zanin,
{\it Sums of independent and freely independent identically distributed random variables,}
Studia Math.  {255} (2020), 55--81.

\bibitem{JMST}
W. Johnson, B. Maurey, G. Schechtman, L. Tzafriri,
{\it Symmetric structures in Banach spaces, }
Mem. Amer. Math. Soc.   {19} (1979), no. 217, v+298 pp.

\bibitem{JS89}
W. Johnson, G. Schechtman,
{\it Sums of independent random variables in rearrangement invariant function spaces,}
Ann. Prob.  {17} (1989), 789--808. 

%\bibitem{JS}
%W. Johnson, G. Schechtman,
%{\it Embedding $\ell_p^m $ into $\ell_1^n$},
%Acta Math.  {149} (1982), 71--85.

%\bibitem{JS03}
%W. Johnson, G. Schechtman,
%{\it Very tight embeddings of subspaces of $L_p$, $1\le p<2$, into $\ell_{p}^n$},
%Geom. Funct. Anal.  {13} (2003), 845--851.

%\bibitem{JSZ17} 
%M. Junge, F. Sukochev, D. Zanin, 
%{\it 
%Embeddings of operator ideals into $L_p$-spaces on finite vonNeumann algebras},
% Adv. Math. 312 (2017), 473--546.

\bibitem{KP70}
M. Kadec, Pe{\l}czy\'{n}ski,
{\it Bases, lacunary sequences and complemented subspaces in the spaces $L_p$},
Studia Math.  {21} (1961/1962), 161--176.

\bibitem{KR} R. V. Kadison, J.  Ringrose, {\it Fundamentals of the theory of operator algebras. I},
Pure and Applied Mathematics, 100. Academic Press, Inc., New York, 1983.

%\bibitem{KMS}
%N. Kalton, S.  Montgomery-Smith,  {\it Set-functions and factorization,}
% Arch. Math. 61
% (1993), 183--200.


\bibitem{KPR}
N. Kalton, N. Peck, J. Roberts,
{\it An $F$-space sampler,}
Cambridge University Press,
Cambridge,
London--New York--New Rochelle--Melbourne--Sydney, 1984.

\bibitem{KS}
N. Kalton, F. Sukochev,
\textit{Symmetric norms and spaces of operators},
J. Reine Angew. Math.  621 (2008), 81--121.
   % \bibitem{Kam}
   % A.~Kaminska,
  %   \emph{Some remarks on Orlicz--Lorentz spaces}, Math. Nachr.
% {147}~(1990), no.~1, 29--38.
 
%\bibitem{KamMal}
%A.~Kaminska, L. Maligranda,
%{\it On Lorentz spaces $\Gamma_{p,\omega}$,}
%Israel J. Math.  {140} (2004), 285--318.

%\bibitem{KR}
%A.~Kaminska, Y. Raynaud,
%{\it Isomorphic copies in the lattice $E$ and its %symmetrization $E^{(*)}$ with applications to Orlicz--Lorentz spaces,}
%J. Funct. Anal.  {257} (2009), 271--331.

%\bibitem{Kosaki}
%H. Kosaki,
%{\it Unitarily invariant norms under which the map $A\to |A|$ is Lipschitz continuous, }
%Publ. RIMS, Kyoto Univ. 28 (1992), 299--313. 
 
 
 \bibitem{KR07}
A. Kamińska,  Y. Raynaud,  {\it  Copies of $l_p$ and $c_0$ in general quasi-normed Orlicz-Lorentz sequence spaces.} Function spaces, 207--227, Contemp. Math., 435, Amer. Math. Soc., Providence, RI, 2007. 
 
\bibitem{KPS}
S. Krein, Y. Petunin, E. Semenov,
 {\it Interpolation of linear operators},
 Translations of Mathematical Monographs, Amer. Math. Soc.
  {  54} (1982).

\bibitem{KurS}
    A. Kuryakov, F. Sukochev,
    {\it Isomorphic classification of $L_{p,q}$-spaces},
    J. Funct. Anal.  {269} (2015), 2611--2630.


\bibitem{KP}
 S. 
 Kwapien,   A. Pelczynski, {\it 
  The main triangle projection in matrix spaces and its
applications},  Studia Math. 34 (1970),  43--68.
 



% \bibitem{Lorentz50}
%  G.G. Lorentz, {\it Some new functional spaces}, Ann. Math. {  51} (1950), 37--55.
%  \bibitem{Lorentz51}
%   G.G. Lorentz, {\it On the theory of spaces $\Lambda $}, Pac. J. Math. 1 (1951) 411–429
 
    
    


%\bibitem{Bylinkina}
%O.P. Bylinkina,
%{\it Some estimates of the Banach--Mazur   distance between finite-dimensional $L_{p,q}$ spaces},
%Theory of operators in function spaces, 14--28,
%Voronezh. Gos. Univ. Voronezh, 1983. (Russian)
%
%\bibitem{Bogachev}
%V.I. Bogachev, {\it Measure theory,}
%Vol. I, II.
%Springer-Verlag, Berlin, 2007.


%\bibitem{Leung}
%D.H. Leung,
%{\it Isomorphism of certain weak $L^p$ spaces, }
%Studia Math.  {104} (1993), 151--160.
%
%\bibitem{Leung2}
%D.H. Leung,
%{\it Isomorphic classification of atomic weak $L^p$ spaces,}
%Interaction between functional analysis, harmonic analysis, and probability (Columbia, MO, 1994), 315--330,
%Lecture Notes in Pure and Appl. Math., 175, Dekker, New York, 1996.
%
%\bibitem{Leung3}
%D.H. Leung,
%{\it Purely non-atomic weak $L^p$ spaces,}
%Studia Math.  {122} (1997), 55--66.
%
%\bibitem{LS}
%D.H.  Leung, R. Sabarudin,
%{\it The classification problem for nonatomic weak $L^p$ spaces},
%J. Funct. Anal.  {258} (2010), 373--396.


%\bibitem{Lewis}
%D. Lewis,
%{\it An isomorphic characterization on the Schmidt class},
%Compositio Math.  {30} (1975), 293--297.
%\bibitem{LT}
 %   J. Lindenstrauss, L.  Tzafriri,
  %  {\it Classical Banach spaces,}
  %  Lecture Notes in Mathematics, 338, Springer-Verlag, Berlin--Hdidelberg--New York, 1973.

%\bibitem{LT3} J.~Lindenstrauss, L.~Tzafriri, {\it On Orlicz sequence spaces. III}, Israel J. Math.  {14} (1973), 368--389.

\bibitem{LP}
J. Lindenstrauss, A. Pelczynski,
{\it
Contributions to the theory of the classical Banach spaces},
J. Funct. Anal., 8 (1971),  225--249.


\bibitem{LT1}  J. Lindenstrauss, L.  Tzafriri,
 {\it Classical Banach spaces. I. Sequence spaces.}
 Ergebnisse der Mathematik und ihrer Grenzgebiete, Vol. 92. Springer-Verlag, Berlin-New York, 1977.

\bibitem{LT2} J.~Lindenstrauss, L.~Tzafriri, \emph{Classical Banach spaces. II. Function spaces,} Ergebnisse der Mathematik und ihrer Grenzgebiete,
     Vol. 97. Springer-Verlag, Berlin-New York, 1979.

 

\bibitem{LSZ}
S. Lord, F. Sukochev, D. Zanin,
 {\it Singular traces: Theory and applications,}
  De Gruyter Studies in Mathematics,  46. De Gruyter, Berlin, 2013.


\bibitem{Lorentz50}
G. Lorentz, {\it 
Some new function spaces}, Ann. of Math. 51 (1950),   37--55.
\bibitem{Lorentz51}
 G. Lorentz, {\it
 On the theory of spaces $\Lambda $}, Pacific J.
 Math. 1 (1951),   411--429.

\bibitem{Loza}
G.Ya. Lozanovski\u{ı}, {\it Mappings of Banach lattices of measurable functions}, Izv. Vyssh. Uchebn. Zaved. Mat. 5 (192) (1978),
84--86 (in Russian).

%\bibitem{LS}
%F. Lust-Piquard, F. Sukochev,
%{\it The $p$-Banach Saks property in symmetric operator %spaces,}
%Illinois J. Math. (51)4 (2007), 1207--1229.
%
%\bibitem{Luxemburg}
%W.A.J. Luxemburg,
%{\it Rearrangement-invariant Banach function spaces,}
%Proc. Symp. Analysis, Queen's univ.  (1967), 83--144.
%
%

\bibitem{Maligranda}
L. Maligranda,
{\it Type, cotype and convexity properties of quasi-Banach spaces,}
 in: M. Kato, L. Maligranda (Eds.), Banach
and Function Spaces, Proc. of the Internat. Symp. on Banach and Function Spaces, Kitakyushu--Japan, 2--4 Oct. 2003,
Yokohama Publ., 2004, pp. 83--120.

\bibitem{Mastylo}
M. Mastylo,
{\it On interpolation of some quasi-Banach spaces},
J. Math. Anal. Appl. 147 (1990), 403--419. 

%
%\bibitem{MP}
%B. Maurey, G. Pisier,
%{\it Series de variables al\'{e}atoires vectorielles independantes et propri\'{e}t\'{e}s geom\'{e}triques des espaces de Banach,}
%Studia Math.  {58} (1976), 45--90.


\bibitem{ON}
R. O'Neil,
{\it Integral transforms and tensor products on Orlicz spaces and $L(p,q)$ spaces,}
J. D'Analyse Math.  {21} (1968), 1--176.
 

\bibitem{Mc}
 Ch.  McCarthy, {\it $c_p$,}
  Israel J. Math. 5 (1967), 249--271.


\bibitem{MN}
P. 
Meyer-Nieberg, {\it Banach lattices},
 Universitext. Springer-Verlag, Berlin, 1991. 
 
\bibitem{Mityagin}
B. Mityagin, {\it 
The homotopy structure of the linear group of a Banach space}, Uspekhi Mat.Nauk 25 (5 (155)) (1970), 63--106 (in Russian); English transl. in: Russian Math. Surveys25 (5) (1970), 59--103.

%\bibitem{Nikisin}
%E. Nikisin, 
%{\it Resonance theorems and superlinear operators, }
%Uspekhi Mat. Nauk 25 (1970), 129--191. 


%\bibitem{Novikova}
%A. Novikova,
%{\it The Banach--Saks index for Rademacher subspaces,}
%Vestnik SSU, vol. 36, Samara University, Samara, 2005 (in Russian).

%\bibitem{ORP}
%S. Okada, W. Ricker,  E. Sanchez Perez, {\it 
%Optimal domain and integral extension of operators acting in function spaces,} Operator theory, Advances and Applications, vol. 180. Birkh\"auser, Basel, 2008.

\bibitem{Pisier}
G. 
Pisier, {\it
Some results on Banach spaces without local unconditional structure},
Compositio Math. 37 (1978), no. 1, 3--19.
 
 \bibitem{Ricard}
E. Ricard, {\it
Fractional powers on noncommutative $L_p$ for $p < 1$,} Adv. Math. 333 (2018), 194--211. 
 
%\bibitem{RS}
%V. Rodin, E. Semenov,
%{\it Rademacher series in symmetric spaces,}
%Anal. Math.  {1} (1975), 207--222.
 
   \bibitem{SS}
    O. Sadovskaya, F. Sukochev, {\it Isomorphic classification of $L_{p,q}$-spaces: the case $p=2$, $1\le q<2$,} Proc. Amer. Math. Soc. {  146} (2018),  3975--3984.

\bibitem{SemSuk}
E. Semenov, F. Sukochev,
{\it Sums and intersections of symmetric operator spaces},
J. Math. Anal. Appl. 414 (2014), 742--755. 

%\bibitem{Schutt}
%C. Schutt,
%{\it Lorentz spaces that are isomorphic to subspaces of $L_1$,}
%Trans. Amer. Math. Soc.  {314}(2) (1989), 583--595. 

 

\bibitem{S96}
F. Sukochev,
\textit{Non-isomorphism of $L_p$-spaces associated with finite and infinite von Neumann algebras,}
 Proc. Amer. Math. Soc. {124}(5)  (1996), 1517--1527.
 
 
\bibitem{S14}
F. Sukochev,
{\it Completeness of quasi-normed symmetric operator spaces},
Indag. 
Math. 25(2) (2014),  376--388. 

\bibitem{Sukochev16}
F. Sukochev, {\it H\"older inequality for symmetric operator spaces and trace property of K-cycles,}
 Bull. London Math. Soc. 48 (2016),  637--647

%\bibitem{Spos98} F. Sukochev, {\it RUC-bases in Orlicz and Lorentz operator spaces,} Positivity {  2} (1998), no. 3, 265--279.

\bibitem{Wojtaszczyk}
P. Wojtaszczyk, {\it
Banach spaces for analysts}, Cambridge University Press, Cambridge, 1991.

%\bibitem{YH}
%K. Yosida, E. Hewitt, {\it
% Finitely additive measures,}
% Trans. Am. Math. Soc. 72(1) (1952), 46–66.


%\bibitem{Zanin}
%D. Zanin,
%{\it Orbits and Khinchine-type inequalities in
%symmetric space},
%Ph.D. thesis, Flinders University, 2011. 

 
\end{thebibliography}
\end{document}